\mathchardef\mhyphen="2D
\numberwithin{equation}{section}
\theoremstyle{plain}
\newtheorem{thm}{Theorem}[section]
\newtheorem*{theointra}{Theorem A}
\newtheorem*{theointrb}{Theorem B}
\newtheorem{prop}[thm]{Proposition}
\newtheorem{lemma}[thm]{Lemma}
\theoremstyle{definition}
\newtheorem{ex}[thm]	{Example}
\newtheorem{df}[thm]{Definition}
\newtheorem{rem}[thm]{Remark}
\newtheorem{notation}[thm]	{Notation}
\newcommand{\x}{\times}
\renewcommand{\o}{\otimes}
\renewcommand{\phi}{\varphi}
\newcommand{\G}{G}
\newcommand{\id}{\mathrm{id}}
\DeclareMathOperator{\Hom}{Hom}
\DeclareMathOperator{\Mor}{Mor}
\DeclareMathOperator{\colim}{colim}
\newcommand{\N}{\mathbb{N}}
\newcommand{\Z}{\mathbb{Z}}
\newcommand{\bC}{\mathbf{C}}
\newcommand{\Gcan}{G_{\mathrm{can,min}}}
\DeclareMathOperator{\supp}{supp}
\DeclareMathOperator{\Tot}{Tot}
\DeclareMathOperator{\CN}{CN}
\DeclareMathOperator{\XHC}{\mathcal{X}HC}
\DeclareMathOperator{\XHH}{\mathcal{X}HH}
\DeclareMathOperator{\HC}{HC}
\DeclareMathOperator{\HH}{HH}
\DeclareMathOperator{\XH}{\mathcal{X}H}
\DeclareMathOperator{\XC}{\mathcal{X}\mathrm{Mix}}
\DeclareMathOperator{\Nerve}{N}
\DeclareMathOperator{\loc}{loc}
\newcommand{\A}{\mathcal{A}}
\newcommand{\bA}{\mathbf{A}}
\newcommand{\B}{\mathcal{B}}
\newcommand{\D}{\mathcal{D}}
\newcommand{\CC}{\mathcal{C}}
\renewcommand{\P}{\mathcal{P}}
\newcommand{\U}{U}
\newcommand{\cU}{\mathcal{U}}
\newcommand{\cO}{\mathcal{O}}
\newcommand{\X}{\mathcal{X}}
\newcommand{\Y}{\mathcal{Y}}
\newcommand{\tr}{\mathrm{tr}}
\newcommand{\Sp}{\mathbf{Sp}}
\newcommand{\cX}{\mathcal{X}}
\newcommand{\Mix}{\mathbf{Mix}}
\newcommand{\fMix}{\mathrm{Mix}}
\newcommand{\Mod}{\mathbf{Mod}}
\newcommand{\LMod}{\Lambda\textbf{-}\mathbf{Mod}}
\newcommand{\BC}{\mathbf{BornCoarse}}
\newcommand{\bCh}{\mathbf{Ch}}
\newcommand{\dgcat}{\mathbf{dgcat}}
\newcommand{\Cat}{\mathbf{Cat}}
\newcommand{\bdgcat}{\mathbf{dgcat}}
\newcommand{\Add}{\mathbf{Add}}
\newcommand{\Fin}{\mathbf{Fin}}
\title{Cyclic homology for bornological coarse spaces}
\author{Luigi Caputi}
\address{Fakult\"at f\"ur Mathematik, Universit\"at Regensburg,
	Universit\"atsstra\ss{}e 31, 93040 Regensburg, Germany}
\email{luigi.caputi@ur.de}
\begin{document}

\begin{abstract}
	The goal of the paper is to define Hochschild and cyclic homology for bornological coarse spaces, \emph{i.e.,}  
	lax symmetric monoidal  functors $\XHH_{}^G$ and $\XHC_{}^G$ 
	from the category $G\BC$ of equivariant bornological coarse spaces  to the cocomplete stable $\infty$-category $\bCh_\infty$ of chain complexes reminiscent of the classical Hochschild and cyclic homology. 
We  investigate relations to  
coarse algebraic $K$-theory $\X K^G_{}$ and to  coarse ordinary homology~$\XH^G$ by  constructing a trace-like natural transformation $\X K_{}^G\to  \XH^G$  that factors through  coarse Hochschild (and cyclic) homology.
 We further  compare  the forget-control map for $\XHH_{}^G$ with the associated generalized assembly map.
\end{abstract}

\maketitle

\setcounter{tocdepth}{1}
\tableofcontents

\section*{Introduction}

Coarse geometry is the study of metric spaces from a large-scale point of view 
\cite{roe2,roe3, roe, mitchener}.
A new axiomatic and homotopic approach to coarse geometry and coarse homotopy theory has been recently developed  by Bunke and Engel \cite{bunke}. 
In this set-up, the main objects are called \emph{bornological coarse spaces} \cite[Def.~2.5]{bunke}, and every metric space is a bornological coarse space in a canonical way. In the equivariant setting,  if $G$ is a group,
$G$-bornological coarse spaces are bornological coarse spaces with a $G$-action by automorphisms \cite[Def.~2.1]{bunkeeq}. Among various invariants of $G$-bornological coarse spaces we are interested in  \emph{equivariant coarse homology theories}, \emph{i.e.,} functors
\[
E\colon G\BC\to \bC
\]
from the category $G\BC$ of $G$-bornological coarse spaces  to a cocomplete stable $\infty$-category $\bC$, satisfying some additional axioms: coarse invariance, flasqueness, coarse excision and u-continuity \cite[Def.\,3.10]{bunkeeq}. Examples of coarse homology theories arise as coarsifications of locally finite  homology theories \cite{bunke}. Among  other theories, there are coarse versions of ordinary homology and of topological $K$-theory \cite{bunke}, of equivariant algebraic  $K$-homology and of topological Hochschild homology \cite{bunkeeq,bunkecisinski}, and of Waldhausen's $A$-theory~\cite{2018arXiv181111864B}.

Classically, 
Hochschild and cyclic homologies have  been defined as homology invariants of algebras \cite{loday}, then extended to invariants of dg-algebras, schemes, additive categories and exact categories~\cite{keller,mccarthy}. 
The aim of the paper is twofold: we construct  coarse homology theories
defining Hochschild and cyclic homology 
for   bornological coarse spaces and then we study their relations to coarse algebraic $K$-theory and coarse ordinary homology. We remark that these coarse homology theories can be abstractly defined  by using a universal  equivariant coarse homology theory constructed by Bunke and Cisinski  \cite{bunkecisinski}. However, we choose to provide a more concrete construction with the hope that it might be more suitable for computations (see, \emph{e.g.,} the application to the construction of the natural transformation to coarse ordinary homology, Theorem \ref{suzfagsazgsafg}). We now describe the main results of the paper.

\vline

Let $k$ be field and let $G$ be a group. 
We denote by $C_{*}^{\mathrm{HH}}$ and $C_{*}^{\mathrm{HC}}$ the chain complexes computing  Hochschild homology and cyclic homology (of $k$-algebras) respectively. The $G$-bornological coarse space $G_{\mathrm{can,min}}$ denotes a canonical  bornological coarse space associated to the group $G$ (see Example~\ref{equivex}). Let $\bCh_\infty$ be the $\infty$-category of chain complexes. 
The following is a combination of  Theorem \ref{gajhljaghjlgalaöghga}, Proposition \ref{XCCsymmmon}, Proposition~\ref{pointHC} and Proposition~\ref{iso}:

\begin{theointra}
	There are lax symmetric monoidal  functors 
	\[
	\mathcal{X}\mathrm{HH}_{k}^G\colon G\mathbf{ BornCoarse}\to \mathbf{Ch}_\infty \quad\text{ and }\quad \mathcal{X}\mathrm{HC}_{k}^G\colon G\mathbf{ BornCoarse}\to \mathbf{Ch}_\infty
	\] 
	satisfying the following properties:
	\begin{enumerate}[(i)]
		\item 	$\mathcal{X}\mathrm{HH}_{k}^G$ and $\mathcal{X}\mathrm{HC}_{k}^G$ are $G$-equivariant coarse homology theories;
		\item there are  equivalences of chain complexes  \[\mathcal{X}\mathrm{HH}_{k}^G(*)\simeq C_{*}^{\mathrm{HH}}(k) \quad  
		\text{ and } \quad 
		\mathcal{X}\mathrm{HC}_{k}^G(*)\simeq C_{*}^{\mathrm{HC}}(k)\]  
		between the evaluations of $\XHH_k^G$ and $\XHC_k^G$ at the one-point bornological coarse space $\{*\}$, endowed with the trivial $G$-action,  and the chain complexes computing Hochschild and cyclic homology of $k$;
		\item there are equivalences 
		\[
		\XHH_k^\G(\G_{\mathrm{can,min}})\simeq C_{*}^{\HH}(k[\G];k)
		\quad		\text{ and }\quad
		\XHC_k^\G(\G_{\mathrm{can,min}})\simeq C_{*}^{\HC}(k[\G];k)\]
		of chain complexes between the evaluations at the $G$-bornological coarse space $\Gcan$ and the chain complexes
		computing Hochschild and cyclic homology of the $k$-algebra~$k[G]$.	
	\end{enumerate}
\end{theointra}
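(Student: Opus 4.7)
The plan is to follow the template of Bunke--Engel--Cisinski for coarse algebraic $K$-theory and topological Hochschild homology, factoring the desired functors through a category of controlled modules. For each $G$-bornological coarse space $X$, I would first construct an intermediate dg-category $\V^G_k(X)$ whose objects are $G$-equivariant, locally finite, $X$-controlled $k$-modules and whose morphisms are controlled $k$-linear maps (with propagation bounded by an entourage). This assignment is functorial in $X\in G\BC$. Post-composing with the Hochschild/cyclic chain complex functors $C^{\HH}_*, C^{\HC}_*\colon \bdgcat\to \bCh_\infty$, extended from algebras to dg-categories in the sense of McCarthy--Keller, yields $\XHH_k^G$ and $\XHC_k^G$. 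The lax symmetric monoidal structure originates from the external tensor product on controlled modules combined with the Eilenberg--Zilber/shuffle map on Hochschild complexes, giving Proposition \ref{XCCsymmmon}.

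Next, I would verify the four axioms of an equivariant coarse homology theory in the sense of \cite[Def.~3.10]{bunkeeq}. Coarse invariance reduces to showing that close maps induce naturally equivalent functors at the level of $\V^G_k$, hence equivalences after applying the Hochschild or cyclic chain complex. Flasqueness follows from an Eilenberg swindle: a flasque endomap of $X$ induces on $\V^G_k(X)$ an endofunctor absorbing itself, and Hochschild/cyclic homology sends such an endofunctor to zero, forcing the invariant to vanish. Coarse excision follows from a split (semi-orthogonal) decomposition of $\V^G_k(X)$ associated to an excisive pair, combined with the fact that Hochschild and cyclic homology are additive (in fact localizing) invariants of dg-categories. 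Finally, u-continuity reduces to the observation that both the controlled dg-category and the Hochschild/cyclic complex preserve the relevant filtered colimits.

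For the identifications at the point and at $\Gcan$, I would argue that $\V^G_k(*)$ with trivial $G$-action is Morita-equivalent to $k$ regarded as a one-object dg-category, so that Morita invariance of Hochschild and cyclic homology recovers $C^{\HH}_*(k)$ and $C^{\HC}_*(k)$, yielding Proposition \ref{pointHC}. At $\Gcan$, unravelling the definitions shows that a $G$-equivariant, locally finite, $\Gcan$-controlled $k$-module is determined by its fibre at $e\in G$ together with the translation $G$-action, so that $\V^G_k(\Gcan)$ is Morita-equivalent to the group algebra $k[G]$ viewed as a one-object dg-category; a second application of Morita invariance gives the identifications of Proposition \ref{iso}.

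The main obstacle I anticipate is verifying coarse excision: while additivity of Hochschild and cyclic homology on split short exact sequences of dg-categories is classical, one still has to produce the concrete semi-orthogonal decomposition of $\V^G_k(X)$ from a coarsely excisive pair and identify the quotient dg-category with the controlled category on the intersection, at a level sufficient to yield a pushout (and not merely a long exact sequence) in $\bCh_\infty$. A secondary difficulty is making the Morita equivalence at $\Gcan$ precise at the dg/$\infty$-categorical level, rather than on underlying homotopy categories, so that the stated equivalences truly live in $\bCh_\infty$.
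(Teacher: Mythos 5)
Your proposal follows essentially the same route as the paper: the functors are obtained by applying Keller's cyclic homology of dg-categories (via mixed complexes) to the Bunke--Engel controlled category $V_k^\G(X)$, the four axioms are verified exactly as you outline, the lax symmetric monoidal structure comes from the shuffle product on the cyclic nerve, and the computations at $\{*\}$ and at $\Gcan$ are the Morita-equivalence arguments you describe. The one correction worth recording is that excision in the paper does not come from a split or semi-orthogonal decomposition: the inclusions $V_k^\G(Z\cap\Y)\subseteq V_k^\G(Z)$ and $V_k^\G(\Y)\subseteq V_k^\G(X)$ are Karoubi filtrations, producing genuinely non-split exact sequences of dg-categories whose quotients are identified with \emph{each other} (not with a category on the intersection) by the functor $V_k^\G(Z)/V_k^\G(Z\cap\Y)\to V_k^\G(X)/V_k^\G(\Y)$, so the full localization theorem of Keller is what is actually invoked --- exactly the obstacle you flagged.
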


The construction of the functors $\XHH_k^\G$ and $\XHC_k^\G$ uses a cyclic homology theory for dg-categories that satisfies certain additive and localizing properties in the sense of Tabuada~\cite{tabuada}.  This   is \emph{Keller's cone construction}
$$
\fMix\colon \mathbf{dgcat}\to\Mix,
$$
for dg-categories \cite{keller},
defined as  a functor
from the category $\mathbf{dgcat}$ of small dg-categories to Kassel's category~$\Mix$ of mixed complexes \cite{kassel}. Hochschild and cyclic homologies for dg-categories  are then defined  in terms of mixed complexes, consistently with the classical definitions for $k$-algebras \cite{kassel}. We also consider the functor 
(with values in the category of small $k$-linear categories $\Cat_{k}$)
$$
V_k^G\colon G\BC\to \Cat_{k},
$$
that associates to every $G$-bornological coarse space $X$ a suitable $k$-linear category $V_k^G(X)$ of $G$-equivariant $X$-controlled (finite-dimensional) $k$-vector spaces \cite[Def.~8.3]{bunkeeq}; a $k$-linear category is a dg-category in a standard way.
We prove that the following functor 
\begin{equation*}
\begin{tikzcd}
\XC_k^\G\colon \G\BC \arrow{r}{V_{k}^{\G}} & \mathbf{Cat}_{k}\arrow{r}{\iota}&\dgcat_{k} 
\arrow{r}{\fMix} & \Mix\arrow{r}{\loc}&\Mix_\infty 
\end{tikzcd}
\end{equation*}
(see Definition \ref{defXHH})  with values in the cocomplete stable $\infty$-category of mixed complexes $\Mix_{\infty}$  is a coarse homology theory (Theorem \ref{echt}).
Coarse Hochschild  $\XHH_k^\G$ and coarse cyclic homology $\XHC_k^\G$ are then  defined by post-composition of the  Hochschild and cyclic homology functors for mixed complexes with the functor  $\XC_k^G$  (see Definition \ref{eqcHH}).\\

Let $\Sp$ be the $\infty$-category of spectra.
The main reason of defining coarse versions of Hochschild and cyclic homology  is to relate them to (the $\Sp$-valued) equivariant coarse algebraic $K$-homology $\cX K_{k}^G\colon G\BC\to\Sp$ \cite[Def.~8.8]{bunkeeq}, because classically algebraic $K$-theory comes equipped with trace maps (\emph{e.g.,} the Dennis trace map from algebraic $K$-theory of rings to Hochschild homology, or the refined version, the cyclotomic trace, from the algebraic $K$-theory spectrum to the topological cyclic homology spectrum) to cyclic homology theories, and these trace maps have been of fundamental importance in its understanding~\cite{MR1202133, mcdg}. 
Inspired by the classical case, we define  trace maps   to  equivariant coarse Hochschild and cyclic homology and from equivariant coarse Hochschild and cyclic homology to equivariant coarse ordinary homology $\XH^G\colon G\BC\to \Sp$ (see 
Proposition~\ref{compkth}, Theorem \ref{suzfagsazgsafg} and Proposition~\ref{kajsbkbvk}):

\begin{theointrb}
	\begin{enumerate}
		\item The classical Dennis trace map  induces a  natural transformation of equivariant coarse homology theories: $$\mathrm{K}\mathcal{X}_{k}^\G\to\XHH_{k}^\G;$$ 
		\item 	There is a natural transformation 
		\begin{equation*}
		\Phi_{\XHH_{k}^\G}\colon \XHH_{k}^\G\to\XH^\G_{}
		\end{equation*}
		of $G$-equivariant  coarse homology theories, which induces an  equivalence of spectra when evaluated at the one-point space $\{*\}$.
	\end{enumerate}
\end{theointrb}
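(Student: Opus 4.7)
My plan is to derive both statements by whiskering classical trace constructions with the functor $V_{k}^{G}\colon G\BC\to \Cat_{k}$, and then to invoke the fact—already proved in Theorem~\ref{echt} and in \cite{bunkeeq}—that the relevant composites are coarse homology theories. A useful observation is that being a coarse homology theory is a property of the underlying object, so once a natural transformation is constructed at the level of underlying $G\BC$-functors, it is automatically a morphism of coarse homology theories and the four axioms need not be re-checked on the map.

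For part (1), the Dennis trace gives a natural transformation $\mathrm{tr}\colon K\Rightarrow C^{\HH}_{*}\circ\fMix$ between functors on $\dgcat$ that factor through Morita localization, in the sense of Tabuada and Keller. Since $\mathrm{K}\X_{k}^{G}$ and $\XHH_{k}^{G}$ are both obtained starting with $V_{k}^{G}$ (and, for $\XHH_{k}^{G}$, precomposing with $\iota$ and postcomposing with $\HH\circ\loc\circ\fMix$), whiskering $\mathrm{tr}$ with $\iota\circ V_{k}^{G}$ yields a natural transformation of $G\BC$-functors. Post-composing with the appropriate localization (and with the Eilenberg--MacLane functor $\bCh_{\infty}\to\Sp$ if one prefers to remain spectrum-valued) gives the required natural transformation of equivariant coarse homology theories.

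For part (2), I would construct $\Phi_{\XHH_{k}^{G}}$ from a Hattori--Stallings-type trace at the level of $V_{k}^{G}(X)$. Concretely, a Hochschild $n$-chain $f_{0}\otimes f_{1}\otimes\cdots\otimes f_{n}$ of controlled endomorphisms is sent to the diagonal finitely-supported function $x\mapsto \tr\bigl((f_{0}f_{1}\cdots f_{n})_{x,x}\bigr)$, regarded as a cycle in the locally-finite equivariant chain complex computing $\XH^{G}(X)$. Each $f_{i}$ has controlled propagation, so the support of the resulting chain lies near the diagonal and is therefore controlled. The construction is natural in $X$, equivariant under $G$, and commutes with the Hochschild differential by a direct check on generators; I would carry it out on the nose in the mixed-complex model $\Mix$, and then pass to $\Mix_{\infty}$ via $\loc$ so that $\HH$ produces a natural transformation of $\bCh_{\infty}$-valued functors. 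Composing with the Eilenberg--MacLane functor compares it with $\XH^{G}$.

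Evaluation at $\{*\}$ is then a direct computation: by Theorem~A\,(ii), $\XHH_{k}^{G}(\{*\})\simeq C^{\HH}_{*}(k)\simeq k[0]$, and the coarse ordinary homology of the point is also $k[0]$; the trace reduces to the usual trace $k\to k$, which is the identity, hence an equivalence of spectra. The principal technical obstacle will be verifying the support/propagation estimates uniformly in the simplicial degree and checking compatibility of the trace with the cyclic action inherited from $\fMix$, so that the construction lifts cleanly to mixed complexes rather than only to ordinary Hochschild homology. Once this is in hand, the remaining statements are automatic from the coarse homology theory status of source and target.
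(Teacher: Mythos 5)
Part (1) of your proposal is essentially the paper's argument: the Dennis/McCarthy trace is whiskered with $V_{k}^{\G}$, and since source and target are already known to be coarse homology theories nothing further needs to be checked (Proposition~\ref{compkth}). Your general remark that a natural transformation between two functors that happen to be coarse homology theories is automatically a morphism of such theories is also correct and is implicitly used in the paper.

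Part (2), however, contains a genuine gap: the Hattori--Stallings-type map you propose is not the right construction, and the step you describe as ``a direct check on generators'' fails. You send $f_{0}\otimes\cdots\otimes f_{n}$ to the function $x\mapsto \tr\bigl((f_{0}\cdots f_{n})_{x,x}\bigr)$ on $X$. Read literally this lands in $\X C_{0}(X)$ (an element of $\X C_{n}(X)$ is a function on $X^{n+1}$, not on $X$), so it cannot assemble into a degree-preserving chain map $\CN_{*}(V_{k}^{\G}(X))\to \X C_{*}^{\G}(X)$. If instead you place this function on the diagonal of $X^{n+1}$, compatibility with the differentials breaks at the cyclic face $d_{n}$: writing $(AB)_{x,x}=\sum_{y}A_{x,y}B_{y,x}$, one has $\sum_{x}\tr\bigl((AB)_{x,x}\bigr)=\sum_{x}\tr\bigl((BA)_{x,x}\bigr)$ by cyclicity of the trace, but the two diagonal functions $x\mapsto\tr\bigl((AB)_{x,x}\bigr)$ and $x\mapsto\tr\bigl((BA)_{x,x}\bigr)$ differ pointwise, so $\phi\circ d_{n}\neq \partial_{n}\circ\phi$ as maps of chains. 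The paper's Definition~\ref{phi} avoids this precisely by \emph{not} collapsing to the diagonal: the coefficient of the tuple $(x_{0},\dots,x_{n})\in X^{n+1}$ is $\tr\bigl((A_{0}\circ\cdots\circ A_{n})|(x_{0},\dots,x_{n})\bigr)$, recording the intermediate points through which the composite passes. Then the inner Hochschild faces $d_{i}$ (composition of $A_{i}$ with $A_{i+1}$) match the coarse faces $\partial_{i}$ (summing out the $i$-th coordinate), and $d_{n}$ matches the cyclic rotation of coordinates via trace cyclicity; local finiteness and $\U$-control of the resulting chain require the separate argument of Lemma~\ref{khfgs,akhfagafskha}, using that each $A_{i}$ is $U_{i}$-controlled and that the chain is then $U_{0}\circ\cdots\circ U_{n}$-controlled. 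Your evaluation at $\{*\}$ is fine in spirit, but the paper additionally exhibits an explicit section $\iota$ of $\phi$ (Proposition~\ref{kajsbkbvk}) to see that the map itself, and not merely abstract identifications of both sides with $C_{*}^{\HH}(k)$, induces the equivalence.
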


By composition, we get a natural transformation, 
$$K\X_k^G\to\XHH_k^G\xrightarrow{} \XH^G$$
that factors through coarse Hochschild homology. The advantage of this transformation is that equivariant coarse ordinary homology $\XH^G$  is defined in terms of equivariant locally finite controlled maps $ X^{n+1}\to k$ (see Definition~\ref{nchaincontrolled}) and it might be suitable for computations of coarse $K$-theory classes. \\

We conclude with some applications to assembly maps.
One of the main applications of coarse homotopy theory is within the studying of assembly map conjectures. We then provide a comparison result between the forget-control maps  for equivariant coarse Hochschild  and cyclic  homology and the associated assembly maps (see Proposition~\ref{assembly}).

\subsection*{Structure of the paper}
In Section \ref{sub11} we review the basic definitions in coarse homotopy theory: bornological coarse spaces, coarse homology theories and categories of controlled objects. In Section \ref{jlahljhgölah}, we introduce the (cocomplete stable $\infty$-category) of mixed complexes and Keller's definition of cyclic homology. In Section~\ref{ahkbfkjabljavö} we define the functors $\XC_k^G$, $\XHH_k^G$ and $\XHC_k^G$ and we prove that they are equivariant coarse homology theories. In the last section Section \ref{hasglkfhagsakö}, we construct the natural transformations from coarse algebraic $K$-homology to  coarse ordinary homology factoring through coarse Hochschild homology.

\subsection*{Conventions}
We freely employ the language of  $\infty$-categories. More precisely, we model $\infty$-categories as quasi-categories \cite{cisin,htt,HA}.  When not otherwise specified, $G$ will denote a  group, $k$ a field, $\o$ the tensor product over $k$. Without further comments, we always consider an additive category as a dg-category in the canonical way.

\section{Equivariant coarse homotopy theory}\label{sub11}

The main purpose of this section is to recollect the  basic definitions in coarse homotopy theory and the notations  needed in  Section~\ref{ahkbfkjabljavö} and Section \ref{hasglkfhagsakö}. We describe the category $\G\mathbf{BornCoarse}$ of $\G$-equivariant bornological coarse spaces and the associated $\G$-equivariant coarse homology theories, we give the examples of coarse ordinary homology and coarse algebraic $K$-homology, together with  the properties of the category of controlled objects $V_k^G(X)$. We refer to \cite[Sec.~2]{bunke} and \cite[Sec.~2 \& Sec.~3]{bunkeeq}  for a comprehensive introduction to (equivariant) coarse homotopy theory.

\subsection{Equivariant bornological coarse spaces}

A \emph{bornology} on a set $X$ is a subset $\B\subseteq \P(X)$ of the power set of $X$ that is closed under taking subsets and finite unions, and such that $X=\cup_{B\in\B} B$. Its elements are called \emph{bounded sets}.

A \emph{coarse structure} on a set $X$ is a subset $\CC\subseteq \P(X\x X)$ which contains the diagonal 
$\Delta_X\coloneqq \{(x,x)\in X\x X\mid x\in X\}$ and is closed under taking subsets, finite unions, inverses, and compositions. 
The elements of $\CC$ are called \emph{entourages}.
If $\mathcal{U}$ is an entourage of a coarse space $X$ and $B$ is any subset of $X$,  the \emph{$\mathcal{U}$-thickening
	of $B$} is the subset of $X$:
\begin{equation}\label{thickening}
\mathcal{U}[B]\coloneqq\{x\in X \mid \exists b\in B, \; (x,b)\in \mathcal{U}\}\subseteq X
\end{equation}

A bornology $\B$ and a coarse structure $\CC$ on a set $X$ are  \emph{compatible} if for every $\U\in \CC$ and every $B\in\B$ the controlled thickening $\U[B]$ belongs to the family $\B$. 

\begin{df}\cite[Def.\,2.5]{bunke}\label{BC}
	A \emph{bornological coarse space} is a triple $(X,\CC,\B)$ given by a set $X$, a bornology $\B$ and a coarse structure $\CC$ on $X$, such that 
	$\B$ and  $\CC$ are compatible. 
\end{df}

Morphisms of bornological coarse spaces are maps such that pre-images of bounded sets are bounded sets and images of entourages are entourages.  
A $G$-bornological coarse space \cite[Def.~2.1]{bunkeeq} is a bornological coarse space $(X,\CC,\B)$ equipped with a $G$-action by automorphisms such that the set of $G$-invariant entourages $\CC^G$ is cofinal in $\CC$.
We denote by $G\BC$ the category of $G$-bornological coarse spaces and $\G$-equivariant, proper  controlled maps. When clear from the context, we shortly write  $X$ for denoting a $G$-bornological coarse space $(X,\CC,\B)$.

\begin{ex}\label{equivex}
	
	\begin{enumerate}[(i)] 
		\item
		\label{jsdlkhbskdl} Let $\G$ be a group, $\B_{\min}$ be the minimal bornology on its underlying set and let $\CC_{\mathrm{can}}\coloneqq \langle \{\G(B\x B)\mid B\in\B_{\min}\}\rangle$ be the coarse structure on  $G$ generated by the $\G$-orbits. The space $\G_{\mathrm{can,min}}\coloneqq(\G,\CC_{\mathrm{can}},\B_{\min} )$ is a $\G$-bornological coarse space. 
		\item Let $X$ be a $\G$-bornological coarse space and let $Z$ be a $\G$-invariant subset of $X$. Then, the triple $Z_X\coloneqq (Z,\CC_Z,\B_Z)$ is a $\G$-bornological coarse space, where 
		$\CC_Z\coloneqq \{(Z\x Z)\cap U\mid U\in \CC\}$ and $\B_Z\coloneqq\{Z\cap B\mid B\in \B \}$. 
		\item Let 
		$\U$ be a $\G$-invariant entourage of $X$.  If 
		$\CC_\U$ denotes the coarse structure on $X$ generated by $\U$, then 
		$X_\U\coloneqq (X,\CC_\U,\B)$ is a $\G$-bornological coarse space. Observe that there is a canonical morphism $X_U\to X$.
	\end{enumerate}
\end{ex}

\subsection{Equivariant coarse homology theories}\label{sub12}

Let $f_0,f_1\colon X\to X'$ be morphisms between bornological coarse spaces.
We say that $f_0$ and $f_1$ are \emph{close} to each other  if the image of the diagonal  $(f_0,f_1)(\Delta_X)$ is an entourage of $X'$.
A morphism $f\colon X\to X'$ is an \emph{equivalence}  if there exists an inverse $g\colon X'\to X$ such that the compositions $g\circ f$ and $f\circ g$ are close to the respective identity maps. 
Two morphisms between $\G$-bornological coarse spaces are close to each other if they are close as morphisms between the underlying bornological coarse spaces.

\begin{df}\cite[Def.\,3.8]{bunkeeq}\label{flasque}
	A  $\G$-bornological coarse space $(X,\CC,\B)$ is called \emph{flasque} if it admits a morphism $f\colon X\to X$ such that:
	\begin{enumerate}[(i)]
		\item $f$ is close to the identity map;
		\item for every entourage $\U$, the subset $\bigcup_{k\in \N} (f^k\x f^k)(\U)$ is an entourage of $X$; 
		\item for every bounded set $B$ in $X$ there exists  $k$ such that $f^k(X)\cap \G B=\emptyset$.
	\end{enumerate}
\end{df}

\begin{df}\cite[Def.\,3.5 \& 3.7]{bunkeeq}\label{complementary}
	Let $(X,\CC,\B)$ be a $G$-bornological coarse space.
	\begin{enumerate}
		\item A \emph{big family} $\Y=(Y_i)_{i\in I}$ on $X$ is a filtered family of subsets of $X$ satisfying the following:
		\[
		\forall\, i\in I, \quad \forall\, \U\in\CC, \, \exists j\in I \text{ such that } \U[Y_i]\subseteq Y_j
		\] 
		An \emph{equivariant big family}  is a big family consisting of $\G$-invariant subsets. 
		\item A pair $(Z,\Y)$ consisting of a subset $Z$ of $X$ and of a big family $\Y$ on $X$ is called a \emph{complementary pair} if there exists an index $i\in I$ for which $Z\cup Y_i=X$. It is 
		an \emph{equivariant complementary pair} if  $Z$ is a $\G$-invariant subset and $\Y$ is an equivariant big family.
	\end{enumerate}
\end{df}

Let $Z$ be a subset of $X$.  If $\Y$ is a big family on $X$, then the intersection $Z\cap \Y\coloneqq (Z\cap Y_i)_{i\in I}$ is a big family on $Z$. 
If $E\colon \G\BC\to \mathbf{C}$ is a functor with values in a cocomplete $\infty$-category  $\mathbf{C}$,  
we define the value of $E$ at the family $\Y$ as the filtered colimit $E(\Y)\coloneqq\mathrm{colim}_{i\in I} E(Y_i)$. There is an induced map from $E(\Y)$ to $E(X)$.  Let $X_U$ be the $G$-bornological coarse space constructed in Example \ref{equivex}.

\begin{df}\cite[Def.\,3.10]{bunkeeq}\label{hcht}
	Let $G$ be a group and let $G\BC$ be the category of $G$-bornological coarse spaces.
	Let $\mathbf{C}$ be a cocomplete stable $\infty$-category. 
	A $\G$-\emph{equivariant $\mathbf{C}$-valued coarse homology theory} is a functor 
	\[
	E\colon \G\BC \longrightarrow \mathbf{C}
	\]
	with the following properties:
	\begin{enumerate}[i.]
		\item \textbf{Coarse invariance:}  $E$ sends equivalences $X\to X'$ of $\G$-bornological coarse spaces  to equivalences  $E(X)\to E(X')$ of $\mathbf{C}$;
		\item \textbf{Flasqueness:} if $X$ is a flasque $\G$-bornological coarse space, then $E(X)\simeq 0$;
		\item \textbf{Coarse excision:} $E(\emptyset)\simeq 0$, and for every equivariant complementary pair $(Z,\Y)$ on  $X$, the diagram
		\[
		\begin{tikzcd}
		E(Z\cap \Y) \arrow{d} \arrow{r} & E(Z) \arrow{d} \\
		E(\Y) \arrow{r} & E(X)  
		\end{tikzcd}
		\]
		is a push-out square;
		\item \textbf{u-continuity:} for every $\G$-bornological coarse space $(X,\CC,\B)$, the canonical morphisms $X_\U\to X$ induce an equivalence $E(X)\simeq \colim_{\U\in \CC^\G}E(X_\U )$.
	\end{enumerate}
\end{df}

Examples of (equivariant) coarse homology theories are coarse ordinary homology \eqref{kjdhlkvsd} and coarse topological  K-theory \cite{bunke}, coarse algebraic K-theory (Definition \ref{sgualau}) and coarse topological Hochschild homology \cite{bunkeeq,bunkecisinski}, coarse Hochschild and cyclic homology. 

\subsection{Coarse ordinary homology}

Let $X$ be a $G$-bornological coarse space, $n\in \N$ a natural number, $B $ a bounded set of $X$, and $x=(x_0,\dots,x_n)$  a point of $X^{n+1}$. We say that $x$ \emph{meets} $B$ if there exists $i\in \{ 0,\dots,n \} $ such that $x_{i}$ belongs to $ B$. If $\U$ is an entourage of $X$, we say that $x$ is \emph{$\U$-controlled} if, for each $i$ and $j$, the pair $(x_i,x_j)$ belongs to  $\U$. 

An  \emph{$n$-chain} $c$ on $X$ is a function 
$
c\colon X^{n+1}\to \Z 
$; its \emph{support} $\supp(c)$ is defined as the set of points for which the function $c$ is non-zero: 
\begin{equation}\label{suppordhom}
\supp(c)=\{x\in X^{n+1}\mid c(x)\neq 0\}.
\end{equation}
We say that an  $n$-chain $c$ is \emph{$\U$-controlled} if every point $x$ of $\supp(c)$ is $\U$-controlled. The chain $c$  is \emph{locally finite} if, for every bounded set $B$, the set of points in $\supp(c)$ which meet $B$ is finite. An $n$-chain $c\colon X^{n+1}\to \Z$ is \emph{controlled} if it is locally finite and $\U$-controlled for some entourage $\U$ of $X$.

\begin{df}\label{nchaincontrolled}
	Let $X$ be a bornological coarse space.
	Then, for $n\in \N$,  $\X C_n(X)$ denotes the free abelian group generated by the locally finite controlled $n$-chains on $X$.
\end{df}  

We will also represent $n$-chains as formal sums 
$\sum_{x\in X^{n+1}} c(x) x$  that are locally finite and $\U$-controlled.
The boundary map $\partial\colon \X C_n(X)\to \X C_{n-1}(X)$ is defined as the alternating sum $\partial\coloneqq\sum_i(-1)^i\partial_i$ of the face maps $\partial_i(x_0,\dots,x_n)\coloneqq (x_0,\dots,\hat{x_i},\dots,x_n)$.
The graded abelian group $\X C_*(X)$, endowed with the boundary operator $\partial$ extended linearly to $\X C_{*}(X)$, is a chain complex \cite[Sec.~6.3]{bunke}.
When $X$ is a $\G$-bornological coarse space, we let $\X C_n^\G(X)$ be the subgroup of $\X C_n(X)$ given by the locally finite controlled $n$-chains that are also $\G$-invariant. The boundary operator restricts to  $\X C_*^\G(X)$, and $(\X C_*^\G(X),\partial)$ is a subcomplex of $(\X C_*(X),\partial)$.

If $f\colon X\to Y$ is a morphism of $G$-bornological coarse spaces, then  we consider the map on the products $X^{n+1}\to Y^{n+1}$ sending $(x_{0},\dots,x_{n})$ to $(f(x_{0}),\dots,f(x_{n})) $. It extends linearly to a map
$
\X C^G(f)\colon \X C^G_{n}(X)\to \X C^G_{n}(Y)
$
that  involves  sums over the pre-images by $f$.
This describes  a functor 
$\label{guafglagf}\X C^G\colon G\BC\to\mathbf{Ch}$ 
with values in the  category $\bCh$ of chain complexes  over the integers. 
The $\infty$-category $\bCh_\infty$ of chain complexes is defined as the localization (in the realm of $\infty$-categories  \cite[Sec.~1.3.4]{HA}) of the nerve of the category $\bCh$ at the class $W$ of quasi-isomorphisms of chain complexes
$
\bCh_\infty\coloneqq \Nerve_{}(\bCh)[W^{-1}].
$
By post-composing the functor $\X C^\G$ with the functor 
\begin{equation}\label{EMcorr}
\mathcal{EM}\colon \bCh\xrightarrow{\loc} \mathbf{Ch}_{\infty}\xrightarrow{\simeq} H\Z\mhyphen\Mod\to \Sp
\end{equation}
(the Eilenberg-MacLane correspondence between chain complexes and spectra \cite[Thm.~1.1]{MR2306038} or  \cite[Sec.~6.3]{bunke}),  we get a functor to the $\infty$-category of spectra
\begin{equation}\label{kjdhlkvsd}
\XH^\G\coloneqq \mathcal{EM}\circ \X C^\G\colon \G\BC\to\Sp\end{equation} 
called \emph{equivariant coarse ordinary homology}:  

\begin{thm}\cite[Thm.\,7.3]{bunkeeq}
	The  functor  $\XH^\G$ is a $G$-equivariant $\Sp$-valued coarse homology theory.
\end{thm}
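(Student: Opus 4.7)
The plan is to reduce each of the four axioms of Definition \ref{hcht} from the spectrum-valued functor $\XH^G = \mathcal{EM} \circ \X C^G$ to a chain-level statement about $\X C^G \colon G\BC \to \bCh$. The Eilenberg--MacLane correspondence \eqref{EMcorr} sends quasi-isomorphisms to equivalences, preserves the zero object and filtered colimits, and carries short exact sequences of chain complexes to cofiber sequences of spectra. Hence it suffices to verify that $\X C^G$ takes equivalences of $G$-bornological coarse spaces to quasi-isomorphisms, vanishes up to quasi-isomorphism on flasque spaces, sends equivariant complementary pairs to Mayer--Vietoris squares of chain complexes, and commutes with the relevant filtered colimit.

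For coarse invariance, two close equivariant morphisms $f_0, f_1 \colon X \to X'$ admit an explicit chain homotopy given by the prism formula
\begin{equation*}
h(x_0, \dots, x_n) = \sum_{i=0}^{n} (-1)^i (f_0(x_0), \dots, f_0(x_i), f_1(x_i), \dots, f_1(x_n)),
\end{equation*}
extended linearly; closeness guarantees that each pair $(f_0(x_i), f_1(x_i))$ lies in a fixed entourage of $X'$, so the prism of a $\mathcal{U}$-controlled locally finite chain is again controlled and locally finite, and equivariance of $f_0, f_1$ makes $h$ equivariant. For flasqueness with witness $f \colon X \to X$, I would run an Eilenberg swindle at the chain level, setting $S \coloneqq \sum_{k \geq 0} \X C^G(f^k)$. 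Condition (iii) of Definition \ref{flasque} forces $f^k(X) \cap B = \emptyset$ for sufficiently large $k$ and every bounded $B$, so only finitely many summands contribute on any bounded region, while condition (ii) ensures that the union of the supports of the chains $\X C^G(f^k)(c)$ is controlled by a single entourage. The identity $S = \id + S \circ \X C^G(f)$, combined with the chain homotopy $\X C^G(f) \simeq \id$ coming from (i), yields $\id \simeq 0$, proving acyclicity. Finally, u-continuity is essentially tautological: by Definition \ref{nchaincontrolled} a controlled chain is $\mathcal{U}$-controlled for some $G$-invariant entourage $\mathcal{U}$, so $\X C^G_*(X) = \colim_{\mathcal{U} \in \mathcal{C}^G} \X C^G_*(X_\mathcal{U})$ as an increasing union, and filtered colimits pass through $\mathcal{EM}$.

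The remaining coarse excision axiom is where I expect the main obstacle to lie. Given an equivariant complementary pair $(Z, \mathcal{Y})$ with $Z \cup Y_{i_0} = X$, the goal is to split an arbitrary $\mathcal{U}$-controlled chain $c$ on $X$ as $c = c_Z + c_Y - c_{Z \cap Y}$, where $c_Z$ is supported in $Z^{n+1}$ and $c_Y$ in $Y_j^{n+1}$ for some $j$ satisfying $\mathcal{U}[Y_{i_0}] \subseteq Y_j$, invoking the big family property. Passing to the filtered colimit over $j$ should then yield a short exact sequence
\begin{equation*}
0 \to \X C^G_*(Z \cap \mathcal{Y}) \to \X C^G_*(Z) \oplus \X C^G_*(\mathcal{Y}) \to \X C^G_*(X) \to 0,
\end{equation*}
whose image under $\mathcal{EM}$ is the desired pushout square. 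The delicate point is that this splitting is not canonical, because each simplex in $\supp(c)$ that lies in $Z \cap Y_j$ must be assigned to exactly one of the two subcomplexes; one has to verify that any two such choices produce chain-homotopic splittings and that the filtered colimit over the big family is correctly identified with the homotopy pushout structure in $\bCh_\infty$.
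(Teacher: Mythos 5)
The paper does not prove this theorem: it is imported verbatim from \cite[Thm.~7.3]{bunkeeq}, so there is no in-text argument to compare against. Your proposal reconstructs the standard Bunke--Engel proof along the expected lines: reduce everything through the Eilenberg--MacLane functor \eqref{EMcorr} (which does preserve quasi-isomorphisms, filtered colimits, and sends degreewise short exact sequences of chain complexes to cofiber sequences of spectra), then verify the four axioms of Definition \ref{hcht} at the level of $\X C^G$. The prism homotopy for coarse invariance, the Eilenberg swindle $S=\sum_{k\ge 0}\X C^G(f^k)$ with local finiteness from condition (iii) and controlledness from condition (ii) of Definition \ref{flasque}, and the tautological identification $\X C^G_*(X)=\colim_{\U\in\CC^G}\X C^G_*(X_\U)$ (using cofinality of $\CC^G$ in $\CC$) are all exactly the arguments in the reference. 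This is essentially the same approach.

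One caution on excision, where you rightly expect the main work. The splitting rule matters: if you assign a $\U$-controlled tuple to $c_Y$ or $c_Z$ according to a single coordinate (say $x_0\in Y_{i_0}$ versus $x_0\in Z$), then $c_Z$ need not be supported in $Z^{n+1}$, since the remaining coordinates only lie in $\U[Z]$. The correct rule is to put into $c_Z$ exactly those tuples lying entirely in $Z^{n+1}$, and into $c_Y$ those meeting $X\setminus Z\subseteq Y_{i_0}$; for the latter, $\U$-controlledness forces all coordinates into $\U[Y_{i_0}]\subseteq Y_j$, which is where the big family property of Definition \ref{complementary} enters. With this, your worry about non-canonicity evaporates: the object to be proved exact is the canonical Mayer--Vietoris sequence
\begin{equation*}
0 \to \X C^G_*(Z \cap \mathcal{Y}) \to \X C^G_*(Z) \oplus \X C^G_*(\mathcal{Y}) \to \X C^G_*(X) \to 0,
\end{equation*}
and the (non-canonical, but $G$-equivariant, since $Z$ is $G$-invariant) splitting is only needed to establish surjectivity of the last map; exactness in the middle is the identity $\X C^G(Z)\cap\X C^G(\mathcal{Y})=\X C^G(Z\cap\mathcal{Y})$, which holds on the nose for supports. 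No comparison of different splittings up to chain homotopy is required.
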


\begin{ex}\label{coarseordinarypoint}
	If $X$ is a point, its coarse homology groups are $0$ in positive and negative degree and  the base ring $k$ in degree $0$. 
\end{ex}

\subsection{The category $V^\G_\bA(X)$ of controlled objects}\label{cobj}

The goal of this subsection is to recall the definition of the additive category $V^\G_\bA(X)$ of $G$-equivariant  $X$-controlled $\bA$-objects \cite[Def.~8.3]{bunkeeq} and of the  functor 
$
V_\bA^G\colon G\BC\to \Add
$
sending a $G$-bornological coarse space to the category $V^\G_\bA(X)$. This functor is an essential ingredient in the construction of coarse homology theories like coarse algebraic $K$-homology and coarse Hochschild and cyclic homology.

Let $G$ be a group and let $X$ be a $G$-bornological coarse space.
\begin{rem}\label{jagshhgl}
	The bornology $\B(X)$ on $X$ defines a poset with the partial order induced by subset inclusion; hence, $\B(X)$ can be seen as a category. 
\end{rem}

Let $\bA$ be an additive category with strict $G$-action.
For every element $g$ in $\G$ and every functor $F\colon \B(X)\to \bA$, let $g F\colon\B(X)\to\bA$ denote the functor  sending a bounded set $B$ in $\B(X)$ to the $\bA$-object  $g(F(g^{-1}(B)))$ (and defined on morphisms $B\subseteq B'$ as the induced morphism of $\bA$ $(gF)(B\subseteq B')\colon gF(g^{-1}(B))\to gF(g^{-1}(B'))$). 

If  $\eta\colon F\to F'$ is a natural transformation between two functors $F,F'\colon \B(X)\to \bA$, we denote by $g\eta\colon g F\to g F'$ the induced natural transformation between $gF$ and $gF'$.

\begin{df}\label{controbj}\cite[Def.\,8.3]{bunkeeq}
	A \emph{$\G$-equivariant $X$-controlled $\bA$-object} is a pair $(M,\rho)$  consisting of a functor $M\colon \B(X)\to \bA$ and a family $\rho=(\rho(g))_{g\in \G}$ of natural isomorphisms $\rho(g)\colon M\to g M$, satisfying the following conditions:
	\begin{enumerate}
		\item $M(\emptyset)\cong 0$;
		\item for all $B,B'$ in $\B(X)$, the commutative diagram 
		\[
		\begin{tikzcd}
		M(B\cap B') \arrow{d} \arrow{r} & M(B) \arrow{d} \\
		M(B') \arrow{r} & M(B\cup B')
		\end{tikzcd}
		\]
		is a push-out;
		\item for all $B$ in $\B(X)$ there exists a finite subset $F$ of $B$ such that the inclusion induces an isomorphism $M(F)\xrightarrow{\cong} M(B)$;
		\item for all elements $g, g'$ in $G$ we have the relation  $\rho(g g')=g\rho(g')\circ \rho(g)$, where $g\rho(g')$ is the natural transformation from $g M$ to $g g' M$ induced by $\rho(g')$.
	\end{enumerate}
\end{df}

\begin{notation}
	If $(M,\rho)$ is an $X$-controlled $\bA$-object and $x$ is an element of $X$, we will often write $M(x)$ instead of $M(\{x\})$ for the value of the functor $M$ at the bounded set $\{x\}$ of $X$.
\end{notation}

Let $X  $ be a  $G$-bornological coarse space    and let $(M,\rho)$ be an equivariant $X$-controlled $\bA$-object. Let $B$ be a bounded set of $X$ and let $x$ be a point in $B$. The inclusion $\{x\}\to B$ induces a morphism
$M(\{x\})\to M(B)$ of $\bA$. The 
conditions of Definition \ref{controbj}
imply that
$M(\{x\})=0$ for all but finitely many points of $B$ and that the canonical morphism (induced by the universal property of the coproduct in $\bA$)
\begin{equation}\label{g5giojgoigg5g45gfg}
\bigoplus_{x\in B} M(\{x\}) \xrightarrow{\cong} M(B)
\end{equation}   
is an isomorphism.
The $\U$-thickening $\U[B]$ \eqref{thickening} of a bounded subset $B$ of $X$ is bounded and $U$-thickenings preserve the inclusions of bounded sets; we get a functor
$
\U[-]\colon\B(X)\to\B(X)
$.

\begin{df}\cite[Def.\,8.6]{bunkeeq}\label{Uequivmorh}
	Let  $(M,\rho)$ and $(M',\rho')$ be   $G$-equivariant $X$-controlled $\bA$-objects and let $\U\in\CC^\G(X)$ be a $G$-invariant entourage of $X$. 
	A \emph{$\G$-equivariant $\U$-controlled morphism} $\phi\colon (M,\rho) \to (M',\rho')$ is a natural transformation 
	\[
	\phi\colon M(-)\to M'\circ \U[-]
	\]
	such that $\rho'(g)\circ \phi=(g \phi)\circ \rho(g) $ for all $g$ in $\G$. 
\end{df}

The set of $G$-equivariant $\U$-controlled morphisms $\phi\colon (M,\rho)\to (M',\rho')$ is denoted by $\Mor_\U((M,\rho),(M',\rho'))$.
For every bounded set $B$ of $X$, the inclusion $\U\subseteq \U'$ induces  an inclusion $\U[B]\subseteq \U'[B]$; this yields  
a natural transformation of functors 
$M'\circ \U[-] \to M'\circ \U'[-]$,
hence a map $$\Mor_\U((M,\rho),(M',\rho'))\to \Mor_{\U^{\prime}}((M ,\rho),(M^{\prime},\rho')) $$ by post-composition.

By using these structure maps we define the abelian group of \emph{$\G$-equivariant controlled morphisms} from $(M,\rho)$ to $(M',\rho')$ as the colimit
\[
\Hom_{V_{\bA}^G(X)}((M,\rho),(M',\rho'))\coloneqq \colim_{\U\in \CC^\G} \mbox{Mor}_\U((M,\rho),(M',\rho')).\
\]

\begin{df}\cite{bunkeeq}\label{catcontrobj}
	Let $X$ be a $G$-bornological coarse space and let $\bA$ be an additive category with strict $G$-action.
	The category $V^\G_\bA(X)$ is the category of $\G$-equivariant $X$-controlled $\bA$-objects and $G$-equivariant controlled morphisms.
\end{df}

Let $k$ be a field. When $\bA$ is the category of finite-dimensional $k$-vector spaces,  then we denote by $V_k^G(X)$ the associated category of $G$-equivariant $X$-controlled (finite-dimensional) $k$-modules.

\begin{lemma}\cite[Lemma 8.7]{bunkeeq}\label{aouhoasöghogh}
	The category of equivariant $X$-controlled $\bA$-objects $V^\G_\bA(X)$ is additive. 
\end{lemma}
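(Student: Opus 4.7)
The plan is to verify the three defining features of an additive category in turn: enrichment in abelian groups with bilinear composition, existence of a zero object, and existence of finite biproducts. For enrichment, note that since $\bA$ is additive, for any two functors $F, F' \colon \B(X) \to \bA$ the set of natural transformations $F \to F'$ is canonically an abelian group under pointwise addition. The subset $\Mor_\U((M,\rho),(M',\rho'))$ of natural transformations $M \to M' \circ \U[-]$ is cut out by the $\Z$-linear equivariance identity $\rho'(g) \circ \phi = (g\phi) \circ \rho(g)$, hence is an abelian subgroup. The transition maps of the filtered system indexed by $\U \in \CC^\G$ are induced by post-composition with natural transformations $M' \circ \U[-] \to M' \circ \U'[-]$, and are therefore group homomorphisms. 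Since a filtered colimit of abelian groups in $\Set$ carries a canonical abelian group structure, $\Hom_{V^\G_\bA(X)}((M,\rho),(M',\rho'))$ is an abelian group, and bilinearity of composition descends from bilinearity of composition in $\bA$.

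A zero object is given by the constant functor $0 \colon \B(X) \to \bA$ equipped with $\rho(g) = \id$; all four conditions of Definition \ref{controbj} are trivially satisfied, and this object is both initial and terminal. For the biproduct of $(M,\rho)$ and $(M',\rho')$, I would form the pointwise direct sum $(M \oplus M')(B) \coloneqq M(B) \oplus M'(B)$, with functoriality in $B$ induced componentwise, equipped with the diagonal action $(\rho \oplus \rho')(g) \coloneqq \rho(g) \oplus \rho'(g)$. Axiom (1) of Definition \ref{controbj} is immediate, axiom (3) follows by taking the union of the finite subsets of $B$ witnessing the condition for $M$ and $M'$, and axiom (4) is the componentwise cocycle identity. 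The canonical inclusions and projections, regarded at $\U = \Delta_X$, are equivariant controlled morphisms, and the biproduct universal property is inherited from $\bA$ once one notes that the filtered colimit defining $\Hom_{V^\G_\bA(X)}$ commutes with finite direct sums.

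The step I expect to require a small argument, and the main conceptual obstacle, is verifying axiom (2) for $(M \oplus M', \rho \oplus \rho')$: that the pointwise square on bounded sets remains a pushout in $\bA$. This follows from the standard fact that in an additive category the direct sum of two pushout squares is again a pushout square, which in turn holds because finite direct sums are coproducts and therefore commute with all colimits in $\bA$.
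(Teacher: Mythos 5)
The paper does not prove this lemma; it is quoted verbatim from Bunke--Engel \cite[Lemma~8.7]{bunkeeq}, so there is no internal argument to compare against. Judged on its own, your verification is correct and is essentially the standard one: the enrichment in abelian groups via filtered colimits of the groups $\Mor_\U$ (the equivariance condition being linear in $\phi$ because the action functors are additive), the zero object, and the pointwise biproduct, whose inclusions and projections are $\Delta_X$-controlled and satisfy the equational characterization of a biproduct. Your treatment of axiom (2) via the fact that the coproduct functor $\bA\times\bA\to\bA$ is a left adjoint and hence preserves pushout squares is exactly right. The only step you gloss that genuinely needs a line of argument is axiom (3): taking $F\cup F'$ requires knowing that enlarging a finite witness set preserves the property, i.e.\ that $M(F)\to M(B)$ being an isomorphism for $F\subseteq F''\subseteq B$ forces $M(F'')\to M(B)$ to be one too. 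This is not formal from $F\subseteq F''$ alone (a priori you only get a split epimorphism), but it follows from axioms (1) and (2): the pushout square over $M(\emptyset)\cong 0$ identifies $M(B)\cong M(F)\oplus M(B\setminus F)$ with $M(F)\to M(B)$ the summand inclusion, so the hypothesis forces $M(B\setminus F)\cong 0$, hence its summand $M(F''\setminus F)$ vanishes as well. With that remark supplied, the proof is complete.
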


Let $f\colon (X,\CC,\B)\to (X',\CC',\B')$ be a morphism of $\G$-bornological coarse spaces. If $(M,\rho)$ is a $\G$-equivariant $X$-controlled $\bA$-object, we consider the functor 
$
f_* M\colon \B'\to\bA
$
defined by $f_* M(B')\coloneqq M(f^{-1}(B'))$ for every bounded set 
$B'$ in $\B'$ and defined on morphisms in the canonical way. For every $g$ in $G$,  the family of transformations  $f_{*}\rho=((f_{*}\rho)(g))_{g\in G}$ is given by  the natural isomorphisms $(f_{*}\rho)(g)\colon f_{*}M\to g(f_{*}M)$ with
$((f_*\rho)(g))(B')\coloneqq \rho(g)(f^{-1}(B')).$
The pair $f_*(M,\rho)\coloneqq (f_*M,f_*\rho)$ defined in this way is a $\G$-equivariant $X'$-controlled $\bA$-object \cite[Sec.~8.2]{bunkeeq}.
Assume also that $\U$ is an invariant entourage of $X$ and that $\phi \colon (M,\rho) \to (M',\rho')$ is an equivariant $\U$-controlled morphism. Then, the set $V:= (f \times f)(\U)$ is a $G$-invariant entourage of $X'$ and 
the morphism:
\begin{equation}\label{hgsalfglagfl}
f_*\phi \coloneqq  \left(  f_*M(B^{\prime})\xrightarrow{\phi_{f^{-1}(B^{\prime})}} M(\U[f^{-1}(B^{\prime})])\to  f_*M(V[B^{\prime}]) \right)_{B^{\prime} \in \B'}\end{equation}
is $V$-controlled.
We have just described  a functor
$
f_* \coloneqq V^G_{\bA}(f)\colon V^\G_\bA(X) \to V^\G_\bA(X').
$

We denote by  \begin{equation}\label{V}
V_\bA^\G\colon\G\BC\to \mathbf{Add}.
\end{equation} the functor from the category of $\G$-bornological coarse spaces to the category of small additive categories obtained in this way.

\begin{rem}\label{ljsahaòlsghgjòal}
	If $\bA$ is a $k$-linear category,  then the functor $V_\bA^\G\colon\G\BC\to \mathbf{Add}$ refines to a functor $V_\bA^\G\colon\G\BC\to \mathbf{Cat}_{k}$ from the category of $G$-bornological coarse spaces to the category of small $k$-linear categories.
\end{rem}

The following properties of the functor $V^G_\bA$ are shown  in \cite{bunkeeq}:

\begin{rem}\label{colimits}
	Let $(X,\CC,\B)$ be a $\G$-bornological coarse space, $\U\in\CC^\G$  a $\G$-invariant entourage of $X$ and  $X_\U\coloneqq (X,\CC_\U,\B)$ the $\G$-bornological coarse space obtained by restriction of the structures. 
	Then, the category $V_\bA^\G(X)$ is the filtered colimit
	\[
	V_\bA^\G(X)\cong \colim_{\U\in\CC^\G}V_\bA^\G(X_\U)
	\]
	indexed on the poset of $\G$-invariant entourages of $X$.
\end{rem}

\begin{lemma}\cite[Lemma 8.11]{bunkeeq}\label{coarseinvariance}
	Let $f,g\colon X\to X'$ be two morphisms of $\G$-bornological coarse spaces. 
	If $f$ and $g$ are close to each other, then they induce naturally isomorphic  functors $f_*\cong g_*\colon V^G_\bA(X)\to V_\bA^G(X')$.
\end{lemma}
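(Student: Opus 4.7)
The plan is to construct, for each equivariant $X$-controlled $\bA$-object $(M,\rho)$, a natural isomorphism $\eta_M \colon f_*(M,\rho) \to g_*(M,\rho)$ in $V^\G_\bA(X')$, and to verify naturality in $(M,\rho)$. The key point is that closeness of $f$ and $g$ makes $V \coloneqq (f,g)(\Delta_X) \cup \Delta_{X'}$ an entourage of $X'$; $\G$-equivariance of $f$ and $g$ upgrades this to a $\G$-invariant entourage, since for $h\in \G$ one has $(hf(x),hg(x))=(f(hx),g(hx))$. By construction $g(x) \in V[\{f(x)\}]$ for every $x \in X$, which rephrases as an inclusion of bounded sets $f^{-1}(B') \subseteq g^{-1}(V[B'])$ for every $B' \in \B'$; this inclusion is the mechanism driving the argument.

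Given this, I would define $\eta_M$ as the $V$-controlled morphism whose component at $B' \in \B'$ is the morphism
$$f_*M(B') = M(f^{-1}(B')) \longrightarrow M(g^{-1}(V[B'])) = g_*M(V[B'])$$
obtained by applying the functor $M$ to the inclusion above. That $\eta_M$ intertwines $f_*\rho$ and $g_*\rho$ in the sense of Definition~\ref{Uequivmorh} follows from $\G$-equivariance of $f$ and $g$ combined with the fact that $f_*\rho$ and $g_*\rho$ are induced from $\rho$ on pre-images. Naturality of $\eta_M$ in $(M,\rho)$ then reduces to compatibility of pushforward along set-theoretic inclusions with any controlled morphism $\phi \colon (M,\rho) \to (M',\rho')$, which is built in and easily checked against the formula \eqref{hgsalfglagfl}.

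To show that $\eta_M$ is invertible in $V^\G_\bA(X')$, I would interchange the roles of $f$ and $g$ in the construction above to obtain a $V'$-controlled morphism $\eta'_M \colon g_*(M,\rho) \to f_*(M,\rho)$ with $V' \coloneqq (g,f)(\Delta_X) \cup \Delta_{X'}$. The compositions $\eta'_M \circ \eta_M$ and $\eta_M \circ \eta'_M$ are then $(V' \circ V)$- and $(V \circ V')$-controlled endomorphisms which, via the point-wise decomposition \eqref{g5giojgoigg5g45gfg}, act as the identity on every summand $M(\{x\})$; consequently they coincide with the identity morphism after enlarging the entourage, hence agree with the identity in the colimit defining $\Hom_{V^\G_\bA(X')}$. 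I expect the main obstacle to be pure bookkeeping: carefully tracking the $\G$-actions through $f_*\rho$, $g_*\rho$ and $g\eta_M$, and verifying that the two compositions collapse to the identities once interpreted as elements of $\Mor_W$ for a sufficiently large entourage $W$, rather than only up to a further thickening. Both points are routine given the axioms in Definition~\ref{controbj} and the colimit description of $\Hom_{V^\G_\bA(X')}$.
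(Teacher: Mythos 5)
Your argument is correct and is essentially the standard proof of this lemma; note that the paper itself states it without proof, deferring entirely to \cite[Lemma 8.11]{bunkeeq}, and your construction matches the one given there. One small fix: with the thickening convention $\U[B]=\{x\mid \exists\, b\in B,\ (x,b)\in\U\}$ from \eqref{thickening}, the claim $g(x)\in V[\{f(x)\}]$ requires $(g(x),f(x))\in V$ rather than $(f(x),g(x))\in V$, so you should take $V\coloneqq (f,g)(\Delta_X)\cup (g,f)(\Delta_X)\cup\Delta_{X'}$; since coarse structures are closed under inverses and finite unions, this is still a $G$-invariant entourage, and everything else goes through as you describe.
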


Let $\A$ be an additive category and denote by $\oplus$ its biproduct. 
Recall that  $\A$ is called \emph{flasque} if it admits an endofunctor 
$S\colon \A\to \A$ and a natural isomorphism $\mathrm{id}_\A\oplus S\cong S$.

\begin{lemma}\cite[Lemma 8.13]{bunkeeq}\label{flasquelemma}
	If $X$ is a flasque $\G$-bornological coarse space, then the category $V_\bA^\G(X)$ of $G$-equivariant $X$-controlled $\bA$-objects  is  a flasque category.
\end{lemma}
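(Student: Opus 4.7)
Let $f\colon X \to X$ be the flasqueness endomorphism provided by Definition~\ref{flasque}, and set $U_f := (f,\mathrm{id})(\Delta_X)$, the $\G$-invariant entourage witnessing that $f$ is close to $\mathrm{id}_X$ via condition~(i). I aim to define the endofunctor required for flasqueness of $V^\G_\bA(X)$ as the infinite pushforward sum
\[
S(M,\rho) := \bigoplus_{k \geq 0}(f^k)_*(M,\rho),
\]
and to realise the natural isomorphism $\mathrm{id}_{V^\G_\bA(X)} \oplus S \cong S$ as a reindexing shift built from the natural isomorphism $\mathrm{id} \cong f_*$ supplied by Lemma~\ref{coarseinvariance}. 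The three axioms of Definition~\ref{flasque} then play complementary roles: (iii) makes $S(M,\rho)$ pointwise a finite direct sum, (ii) makes the induced morphisms $S(\phi)$ genuinely controlled, and (i) supplies the shift isomorphism.

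\textbf{Construction and functoriality.} On objects, define $S(M,\rho)\colon\B(X) \to \bA$ pointwise by $B \mapsto \bigoplus_{k \geq 0} M\bigl((f^k)^{-1}(B)\bigr)$, with $\G$-action inherited from the summands $(f^k)_*\rho$. For each bounded $B$, condition (iii) yields $k_0$ with $f^{k_0}(X) \cap \G B = \emptyset$; since $f^{k+1}(X) = f^k(f(X)) \subseteq f^k(X)$, the disjointness propagates, so $(f^k)^{-1}(B) = \emptyset$ for all $k \geq k_0$ and the sum at $B$ is finite. The axioms of Definition~\ref{controbj} then hold summandwise. For a morphism $\phi\colon M \to M'$ that is $U$-controlled with $U \in \CC^\G$, each $(f^k)_*\phi$ is $(f^k \times f^k)(U)$-controlled; condition (ii) provides the single ambient entourage $V := \bigcup_{k \geq 0}(f^k \times f^k)(U)$, so that $S(\phi) := \bigoplus_k (f^k)_*\phi$ is a well-defined $V$-controlled morphism in $V^\G_\bA(X)$. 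Functoriality is immediate from its componentwise definition.

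\textbf{The shift isomorphism.} By condition~(i) and Lemma~\ref{coarseinvariance}, there is a natural isomorphism $\sigma\colon \mathrm{id}_{V^\G_\bA(X)} \xrightarrow{\cong} f_*$ whose components $\sigma_N\colon N \to f_* N$ are $U_f$-controlled. Writing $\sigma_k := \sigma_{(f^k)_* M}\colon (f^k)_* M \xrightarrow{\cong} (f^{k+1})_* M$, I set
\[
\tau_M := \mathrm{id}_M \oplus \bigoplus_{k \geq 0}\sigma_k\colon M \oplus S(M) \longrightarrow S(M),
\]
identifying the new summand $M$ with the $k=0$ copy of $M$ inside $S(M)$ and shifting each $(f^k)_*M$ onto $(f^{k+1})_*M$ via $\sigma_k$. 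Every $\sigma_k$ is $U_f$-controlled independently of $k$, and $\mathrm{id}_M$ is $\Delta_X$-controlled, so $\tau_M$ is $(U_f \cup \Delta_X)$-controlled and is a bona fide morphism in $V^\G_\bA(X)$; it is summandwise an isomorphism, and naturality in $M$ is inherited from that of $\sigma$. This exhibits $V^\G_\bA(X)$ as a flasque additive category.

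\textbf{Main obstacle.} The only delicate point is verifying that $S(\phi)$ lies in $V^\G_\bA(X)$, i.e.\ that a \emph{single} entourage bounds all the varying controls $(f^k \times f^k)(U)$ simultaneously. This is precisely what flasqueness condition (ii) is designed to guarantee, and it clarifies why that axiom is imposed in Definition~\ref{flasque} alongside the more geometric conditions (i) and (iii).
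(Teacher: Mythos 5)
Your proof is correct and follows the same route as the proof of the cited result (the paper itself only imports this lemma by reference to \cite{bunkeeq}*{Lemma 8.13}): one takes $S=\bigoplus_{k\geq 0}(f^k)_*$, with condition (iii) giving pointwise finiteness, condition (ii) giving a uniform control for $S(\phi)$, and condition (i) together with Lemma~\ref{coarseinvariance} giving the shift isomorphism $\mathrm{id}\oplus S\cong S$. The only point you gloss over is that each $(f^k)^{-1}(B)$ is bounded (properness of $f$) so that $S(M,\rho)$ is again an $X$-controlled object, but this is immediate since $f$ is a morphism of $G$-bornological coarse spaces.
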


We conclude with the definition of coarse algebraic $K$-homology:

\begin{df}\cite[Def.\,8.8]{bunkeeq}\label{sgualau}
	Let $G$ be a group and let $\bA$ be an additive category with strict $G$-action.
	The \emph{$G$-equivariant coarse algebraic $K$-homology} associated to $\bA$ is the $K$-theory of the additive category of $\bA$-controlled objects:
	\[
	K\bA\cX^G\coloneqq K\circ V_{\bA}^G\colon G\BC\to\Sp.
	\] 
\end{df}

When  $\bA$ is the category of finite-dimensional $k$-vector spaces, we denote by $K\cX_{k}^G$ the associated $K$-theory functor.
The properties of the functor $V_{\bA}^G$ reviewed above are used in order to prove the following:

\begin{thm}\cite[Thm.\,8.9]{bunkeeq}
	Let $G$ be a group and let $\bA$ be an additive category with strict $G$-action.
	Then, the functor $K\bA\cX^G$ is a $G$-equivariant $\Sp$-valued coarse homology theory.
\end{thm}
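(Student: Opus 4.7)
The plan is to verify each of the four axioms of Definition~\ref{hcht} for $K\bA\cX^G=K\circ V_\bA^G$, invoking the properties of $V_\bA^G$ collected above together with three standard facts about (nonconnective) algebraic $K$-theory of small additive categories: it sends naturally isomorphic functors to equivalent maps of spectra, it vanishes on flasque additive categories (via the Eilenberg swindle), and it commutes with filtered colimits of additive categories.

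Coarse invariance is immediate from Lemma~\ref{coarseinvariance}. Indeed, if $f\colon X\to X'$ is an equivalence with quasi-inverse $g$, then $g\circ f$ and $f\circ g$ are close to the respective identities, so Lemma~\ref{coarseinvariance} gives $g_*\circ f_*\cong \id$ and $f_*\circ g_*\cong \id$ in $V_\bA^G$; applying $K$ yields inverse equivalences of spectra. Flasqueness follows from Lemma~\ref{flasquelemma}: a flasque $X$ produces a flasque $V_\bA^G(X)$, and the endofunctor $S$ with $\id\oplus S\cong S$ forces $K(V_\bA^G(X))\simeq 0$ by the Eilenberg swindle. U\mbox{-}continuity is a direct consequence of Remark~\ref{colimits}: writing $V_\bA^G(X)\cong \colim_{U\in\CC^G}V_\bA^G(X_U)$ and using that $K$ preserves filtered colimits of additive categories yields $K\bA\cX^G(X)\simeq \colim_{U\in\CC^G} K\bA\cX^G(X_U)$.

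The main obstacle is coarse excision. Given an equivariant complementary pair $(Z,\Y)$ with $Z\cup Y_i=X$ for some $i\in I$, the strategy is to show that the square of additive categories
\[
\begin{tikzcd}
V_\bA^G(Z\cap\Y) \arrow[r]\arrow[d] & V_\bA^G(Z)\arrow[d]\\
V_\bA^G(\Y) \arrow[r] & V_\bA^G(X)
\end{tikzcd}
\]
(where $V_\bA^G(\Y)\coloneqq \colim_{i\in I} V_\bA^G(Y_i)$ and similarly for $V_\bA^G(Z\cap\Y)$) is sent by nonconnective $K$-theory to a pushout of spectra, and separately that $K\bA\cX^G(\emptyset)\simeq 0$ (which is clear since $V_\bA^G(\emptyset)$ is the zero category by condition (1) of Definition~\ref{controbj}). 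The geometric input is that every $G$-equivariant $X$-controlled object $(M,\rho)$ decomposes along $G$-invariant subsets via the isomorphism~\eqref{g5giojgoigg5g45gfg} and the pushout property in $\B(X)$; combined with $Z\cup Y_i=X$, any bounded set in $X$ splits into a part in $Z$ and a part in $Y_i$, yielding a direct sum decomposition of $(M,\rho)$ as an object of $V_\bA^G(X)$ (up to controlled isomorphism) into a summand coming from $V_\bA^G(Z)$ and one coming from $V_\bA^G(\Y)$. Formally, one exhibits the square as a Karoubi–Verdier localization sequence of additive categories whose quotient $V_\bA^G(X)/V_\bA^G(\Y)$ is equivalent to $V_\bA^G(Z)/V_\bA^G(Z\cap\Y)$; nonconnective $K$-theory then sends it to a fiber sequence, equivalently a pushout in $\Sp$.

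The delicate step is the Karoubi–Verdier identification: one must check that every morphism in $V_\bA^G(X)$ can, up to modification by a morphism factoring through $V_\bA^G(\Y)$, be represented by one coming from $V_\bA^G(Z)$, with the ambiguity controlled exactly by $V_\bA^G(Z\cap\Y)$. This is where the big-family property of $\Y$ (that $U$-thickenings of $Y_i$ stay within the family) is essential: it ensures that the entourage enlargements used to factor a given $U$-controlled morphism through $Z$ and through $\Y$ remain compatible with the filtered colimits defining $V_\bA^G(\Y)$ and $V_\bA^G(Z\cap\Y)$.
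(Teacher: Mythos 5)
Your proposal is correct and follows exactly the argument the paper has in mind: the paper itself only cites this result from \cite{bunkeeq}, but the ingredients it reviews (Lemma~\ref{coarseinvariance} for coarse invariance, Lemma~\ref{flasquelemma} for flasqueness, Remark~\ref{colimits} for u-continuity, and Lemmas~\ref{karoubincl} and~\ref{equiv} together with the localization theorem for nonconnective $K$-theory for excision) are precisely the ones you use, in the same roles. The parallel proof given in the paper for $\XC_k^G$ (Theorem~\ref{echt} and Propositions~\ref{prop1}--\ref{prop3}, Theorem~\ref{excision}) confirms that your outline, including the Karoubi--Verdier identification of the quotients, is the intended route.
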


\section{Keller's cyclic homology for dg-categories}\label{jlahljhgölah}

In this section  we recall Keller's construction of cyclic homology for dg-categories \cite{keller}. We start by recalling some properties of differential graded categories and mixed complexes, we introduce  Keller's construction and then review Keller's Localization Theorem \cite[Thm.~1.5]{keller}.
Keller defines the cyclic homology of a dg-category as the cyclic homology of a suitable mixed complex associated to it.
We point here that,  in the next Section \ref{ahkbfkjabljavö} and in particular in Definition \ref{defXHH}, we will need Keller's cyclic homology in the less general  context of additive ($k$-linear) categories. However, for consistency with his language and for sake of completeness, we will state Keller's definition and results in the broader context of dg-categories.

\subsection{Dg-categories}
In the following, we use the same conventions on differential graded categories and their properties as found in \cite{kellerdg}; we refer to the same survey for a general overview on the subject.
We recall that a dg-category over $k$ is a category enriched on (the category of) chain complexes of $k$-modules and that every additive, or $k$-linear, category, is a dg-category in a canonical way. We denote by $\mathbf{dgcat}_k$ the category of small dg-categories (over $k$) and dg-functors.

\begin{rem}\label{jganlngjòl}
	The category of dg-modules (over a dg-algebra or a dg-category) admits two Quillen model structures  where the weak equivalences are the objectwise quasi-isomorphisms of dg-modules; these are the injective and the projective model structure  induced from the injective and projective model structure  on chain complexes, respectively.
	We remark that the category of dg-modules over a dg-algebra, equipped with the projective model structure (hence the fibrations are the objectwise epimorphisms), is a combinatorial model category; see, for example, \cite[Rem.~2.14]{cohn}.
\end{rem}
If $\A$ is a dg-category, we can define an associated derived category:

\begin{df}\label{dercat}\cite[Sec.\,3.2]{kellerdg}
	The \emph{derived category}~$\D(\A)$ of a dg-category $\A$ is the localization of the category of dg-modules over $\A$ at the class of quasi-isomorphisms.
\end{df}

The objects of $\D(\A)$ are the dg-modules over $\A$ and the morphisms are obtained from morphisms of dg-modules by inverting the quasi-isomorphisms.  It is a triangulated category with shift functor induced by the $1$-translation and triangles coming from short exact sequences of complexes.

Let $\A$ and $\B$ be two small dg-categories. A dg-functor $F\colon \A\to\B$ is called a \emph{Morita equivalence} if it induces an equivalence of derived categories. For a precise definition of Morita equivalences we refer to \cite[Sec.~3.8]{kellerdg}, or \cite[Def.~2.29]{cohn}. 
\begin{thm}\cite[Thm.\,5.1]{tabuada2}\label{thmmorita}
	The category $\bdgcat_k$ of small dg-categories over $k$ admits the structure of a combinatorial model category whose weak equivalences are the Morita equivalences.
\end{thm}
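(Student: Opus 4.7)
The plan is to realize the Morita model structure on $\bdgcat_k$ as a left Bousfield localization of Tabuada's earlier \emph{quasi-equivalence} model structure. Recall that in previous work Tabuada constructed a cofibrantly generated model structure on $\bdgcat_k$ whose weak equivalences are the dg-functors $F\colon \A \to \B$ that (i) induce quasi-isomorphisms on all hom-complexes and (ii) induce an essentially surjective functor $H^0(F)\colon H^0(\A) \to H^0(\B)$. First I would verify (or cite) that this quasi-equivalence model structure is combinatorial, which follows from the explicit description of the generating sets of cofibrations and trivial cofibrations given by Tabuada, and that it is left proper: left properness can be obtained either from the fact that every dg-category is fibrant in this structure or by a direct pushout-preservation argument for cofibrations.

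Second, since any combinatorial left proper model category admits a left Bousfield localization at any small set of morphisms (Smith's theorem; see also Hirschhorn), it suffices to choose a set $S$ of morphisms in $\bdgcat_k$ so that the $S$-local equivalences coincide with the Morita equivalences. The natural choice is the set of canonical inclusions $\A \hookrightarrow \widehat{\A}$, where $\widehat{\A}$ denotes a small, functorial model for the dg-category of compact objects in the derived category $\D(\A)$, that is, the pretriangulated and idempotent-completed hull of $\A$. Each such map is by construction a Morita equivalence, and the $S$-local objects are precisely the \emph{Morita-fibrant} dg-categories, namely those that are quasi-equivalent to the compact part of their own derived category. The resulting localized model structure is then automatically combinatorial by Smith's theorem.

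The main obstacle is matching the $S$-local equivalences produced abstractly by the Bousfield localization with the Morita equivalences defined in terms of derived categories. To handle this I would exhibit the assignment $\A \mapsto \widehat{\A}$ as an explicit Morita-fibrant replacement, functorial up to quasi-equivalence, and then use the characterization of Morita equivalences as those dg-functors $F\colon \A\to\B$ for which the induced map $\widehat{F}\colon \widehat{\A}\to \widehat{\B}$ is a quasi-equivalence, equivalently for which $F^{\ast}\colon \D(\B)\to \D(\A)$ restricts to an equivalence between the subcategories of compact objects. Combining this with the characterization of $S$-local equivalences as the maps that become weak equivalences after $S$-fibrant replacement yields the desired equality of classes of weak equivalences and completes the construction of the combinatorial Morita model structure on $\bdgcat_k$.
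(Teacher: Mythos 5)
First, a remark on the comparison itself: the paper does not prove this statement at all — it is quoted directly from Tabuada \cite[Thm.~5.1]{tabuada2} — so there is no internal proof to measure your argument against, and your proposal has to be judged as a reconstruction of Tabuada's proof. Your overall picture (obtain the Morita structure from the quasi-equivalence structure by a localization whose fibrant objects are the Morita-fibrant dg-categories, with fibrant replacement $\A\mapsto\widehat{\A}$ given by the compact objects of $\D(\A)$, and then identify the local equivalences with the Morita equivalences) is the right one and is essentially how the theorem is proved in the literature.

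There are, however, two genuine gaps in the way you set up the localization. (1) Your justification of left properness is incorrect: the fact that every dg-category is fibrant in the quasi-equivalence model structure yields \emph{right} properness, not left properness; left properness would follow from all objects being cofibrant, which fails in $\bdgcat_k$, and indeed the quasi-equivalence model structure is not left proper (this is exactly why homotopy pushouts of dg-categories require cofibrant replacement of a leg). Hence the Hirschhorn/Smith existence theorem for left Bousfield localizations, as you invoke it, does not apply. The standard repairs are either a Bousfield--Friedlander-type localization, which uses right properness (available here) together with the coaugmented, homotopy-idempotent functor $\A\mapsto\widehat{\A}$, or an application of Smith's recognition theorem directly to the class $W$ of Morita equivalences, in which case the substantive step — that cofibrations which are Morita equivalences are stable under pushout and transfinite composition — must be verified by hand and is not a formality. (2) Your localizing family $S=\{\A\hookrightarrow\widehat{\A}\}$ is indexed by all small dg-categories and is therefore a proper class, not a set; a left Bousfield localization requires a \emph{set} of maps, so you would need an accessibility argument to replace $S$ by a small generating subset, or avoid $S$ altogether as in the two repairs above. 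Until these two points are addressed, the construction of the localized model structure has not actually been carried out.
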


For a description of fibrations and cofibrations we refer to \cite[Thm.~5.1]{tabuada2}, or \cite[Thm.~4.1]{kellerdg}. 
We conclude with the definition of short exact sequences of dg-categories:

\begin{df}\cite[§4.6]{kellerdg}\label{kjasfkhasgfksal}
	A \emph{short exact sequence} of dg-categories is a sequence of morphisms $\A\to\B\to\CC$ inducing an exact sequence of triangulated categories
	\[
	\D^b (\A)\to \D^b (\B)\to \D^b (\CC)
	\]
	in the sense of Verdier.
\end{df}

\subsection{The $\infty$-category of mixed complexes}
\label{secmixcompl}

In this subsection we describe the (cocomplete stable $\infty$-)category of unbounded mixed complexes.
We follow Kassel's approach \cite{kassel}.

\begin{df}\cite[§1]{kassel}\label{mixcompl}
	A \emph{mixed complex} $(C,b,B)$ is a triple consisting  of a  $\Z$-graded $k$-module $C=(C_p)_{p\in \Z}$ together with differentials $b$ and $B$  $$b=(b_{p}\colon C_p\to C_{p-1})_{p\in\Z} \quad  \text{ and } \quad  B=(B_{p}\colon C_p\to C_{p+1})_{p\in\Z}$$  of degree $-1$ and $1$, respectively, satisfying the following identities:
	\[
	b^2=0,\quad B^2=0,\quad bB+Bb=0.
	\]
	Morphisms   of mixed complexes  are given by maps commuting with both the differentials $b$ and $B$. The category of mixed complexes and morphisms of mixed complexes is denoted by $\Mix$.
\end{df}

When the differentials are clear from the context, we refer to a mixed complex $(C,b,B)$ by its underlying $k$-module $C$.

Let $\Lambda$ be the dg-algebra  over the field $k$
\begin{equation}
\Lambda\coloneqq \dots\to 0\to k\epsilon\xrightarrow{0} k\to 0\to\cdots
\end{equation}
generated by an indeterminate $\epsilon$ of degree $1$, with $\epsilon^2=0$ and differential (of degree $-1$) $d(\epsilon)=0$.
Mixed complexes are nothing but dg-modules over the  dg-algebra $\Lambda$:

\begin{rem}\cite{kassel}\label{agsfafyl}
	The  category $\Mix$ of mixed complexes is equivalent (in fact, isomorphic) to the category of left $\Lambda$-dg-modules, which we denote by $\Lambda\text{-}\Mod$. 
	We denote by  $L\colon\Mix\to\LMod$ the  functor sending a mixed complex to the associated $\Lambda$-dg-module and by $R\colon \LMod\to\Mix$  its inverse functor.
\end{rem}

The category of $k$-dg-modules  admits a combinatorial model structure (the projective model structure, see Remark \ref{jganlngjòl}), whose weak equivalences are the objectwise quasi-isomorphisms of dg-modules. In the language of mixed complexes this translates as follows:

\begin{df}\label{quasiisomixcoplx}
	A morphism $(C,b,B)\to (C',b',B')$ of mixed complexes is called a \emph{quasi-isomorphism} if the underlying $b$-complexes are quasi-isomorphic via the induced chain map $(C,b)\to(C',b')$. 
\end{df}

\begin{rem}\label{lkghòadshò}
	Quasi-isomorphisms of mixed complexes correspond to quasi-isomorphisms of $\Lambda$-dg-modules, \emph{i.e.,} the functors  $L$ and $R$ of Remark \ref{agsfafyl} preserve  quasi-isomorphisms. 
\end{rem}

If $\bC$ is an ordinary category and $W$ denotes a collection of morphisms of $\bC$, then $\Nerve_{}(\bC)[W^{-1}]$ is the $\infty$-category obtained by the nerve $\Nerve(\bC)$ of $\bC$ by inverting the set of morphisms $W$ (see \cite[Def.~7.1.2 \& Prop.~7.1.3]{cisin}, \cite[Def.~1.3.4.1]{HA}).

\begin{df}\label{lmodinfty}
	The $\infty$-category \begin{equation*}
	\Mix_\infty\coloneqq \Nerve_{}(\Mix)[W_{\mathrm{mix}}^{-1}]
	\end{equation*} of mixed complexes is  defined as the localization  of the (nerve of the) category $\Mix$ at the class $W_{\mathrm{mix}}$ of quasi-isomorphisms of mixed complexes.
\end{df}

Analogously, the $\infty$-category $\LMod_\infty$ is  defined as the localization of the category $\LMod$ of $\Lambda$-dg-modules  at the class $W$ of quasi-isomorphisms of $\Lambda$-dg-modules:
\begin{equation}
\LMod_\infty\coloneqq \Nerve(\LMod)[W^{-1}].
\end{equation}

\begin{prop}\label{mixstconpl}
	The $\infty$-category $\Mix_\infty$ is a  cocomplete stable $\infty$-category. 
\end{prop}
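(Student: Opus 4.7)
The plan is to transport the question across the isomorphism of 1-categories $L\colon \Mix\to\LMod$ from Remark~\ref{agsfafyl}. By Remark~\ref{lkghòadshò}, $L$ and its inverse $R$ preserve quasi-isomorphisms, so they send the class $W_{\mathrm{mix}}$ to the class $W$ of weak equivalences of $\Lambda$-dg-modules and vice versa. Consequently the functor $L$ descends to an equivalence of $\infty$-categorical localizations
\[
\Mix_\infty \simeq \LMod_\infty,
\]
and it is enough to verify that $\LMod_\infty$ is cocomplete and stable.

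For cocompleteness, I would invoke the projective model structure on $\LMod$ recalled in Remark~\ref{jganlngjòl}, which makes $\LMod$ a combinatorial model category (as noted there with reference to \cite{cohn}). A standard result identifies the $\infty$-categorical localization $\Nerve(\LMod)[W^{-1}]$ with the underlying $\infty$-category of this model category (see \cite[\S1.3.4]{HA}), and the underlying $\infty$-category of any combinatorial model category is presentable; in particular it admits all small colimits.

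For stability, I would use the standard fact that for any dg-algebra $A$ the $\infty$-category of $A$-dg-modules is stable: the shift $[1]$ is a Quillen self-equivalence of $\LMod$ (with strict inverse $[-1]$) and descends to the suspension on $\LMod_\infty$, so that suspension is an autoequivalence; combined with the existence of a zero object and of finite limits and colimits inherited from the presentable structure, this is equivalent to stability. This is essentially \cite[Prop.~1.3.1.20]{cohn} (or can be obtained from \cite[\S1.1]{HA}).

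The only point that needs a brief justification, rather than being a routine invocation, is the passage from the 1-categorical localization at quasi-isomorphisms to the model-categorical localization of $\LMod$; but this is immediate from Remark~\ref{lkghòadshò} together with the fact that $W$ is precisely the class of weak equivalences of the projective model structure, so I do not expect any genuine obstacle in the proof.
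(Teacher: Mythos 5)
Your argument is correct, and the skeleton coincides with the paper's: both proofs transport the problem across the isomorphism $L\colon\Mix\to\LMod$ of Remark~\ref{agsfafyl} (using Remark~\ref{lkghòadshò} to see that the localizations agree), and both obtain cocompleteness from the combinatoriality of the projective model structure via \cite[Prop.~1.3.4.22]{HA}. Where you genuinely diverge is the proof of stability. The paper regards $\LMod$ as a pre-triangulated dg-category, applies the dg-nerve to get a stable $\infty$-category $\Nerve_{\mathrm{dg}}(\LMod)$, and then invokes the comparison results \cite[Prop.~1.3.1.17, Rem.~1.3.4.16, Thm.~1.3.4.20]{HA} to identify this with $\Nerve(\LMod)[W^{-1}]$; a by-product is the identification of the homotopy category with $\D(\Lambda)$, which the paper records in the subsequent remark. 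You instead use the recognition principle for stability of a pointed presentable $\infty$-category (essentially \cite[Cor.~1.4.2.27]{HA}): the shift $[-]\mapsto[-][1]$ is a strict automorphism of $\LMod$ preserving quasi-isomorphisms, hence descends to an autoequivalence of $\LMod_\infty$, and one checks that it models the suspension. Your route avoids the dg-nerve machinery entirely, at the cost of the small (but standard) verification that the suspension in the localization is indeed computed by the shift, i.e.\ that $0\sqcup_M 0\simeq M[1]$ via the cone construction; you should spell this out rather than assert it, and the citation \cite[Prop.~1.3.1.20]{cohn} looks misattributed, but neither point is a substantive gap.
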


\begin{proof}
	The category $\LMod$ is a (pre-triangulated) dg-category. By applying the dg-nerve functor $\Nerve_{\text{dg}}$ \cite[Constr.~1.3.1.6]{HA} 
	we obtain an $\infty$-category $\Nerve_{\text{dg}}(\LMod)$ \cite[Prop.~1.3.1.10]{HA}.
	The dg-nerve functor sends pre-triangulated dg-categories to stable $\infty$-categories \cite[Thm.~4.3.1]{faonte}, \cite[Prop.~1.3.1.10]{HA}. The $\infty$-category $\Nerve_{\text{dg}}(\LMod)$  is  stable  and its homotopy category can be identified (as a triangulated category) with the derived category $\D(\Lambda)$  associated to the dg-algebra $\Lambda$.  
	
	The category $\LMod$ is equipped with a combinatorial simplicial model structure by Remark \ref{jganlngjòl}. By \cite[Prop.~1.3.1.17]{HA} and by the fact that the simplicial nerve of the simplicial category associated to $\LMod$ is equivalent to the localization $\Nerve(\LMod)[W^{-1}]$ (by \cite[Rem.~1.3.4.16 \& Thm.~1.3.4.20]{HA} where we also use that the model category $\LMod$ is combinatorial, hence admits functorial factorizations), the two constructions $\Nerve(\LMod)[W^{-1}]$ and $\Nerve_{\text{dg}}(\LMod)$ present equivalent $\infty$-categories. Hence, the $\infty$-category $\LMod_{\infty}$ is a stable $\infty$-category.
	The $\infty$-category $\LMod_{\infty}$ is also cocomplete by \cite[Prop.~1.3.4.22]{HA} because the model category $\LMod$ is combinatorial.
	
	The categories $\Mix$ and $\LMod$  are isomorphic by  Remark \ref{agsfafyl}  and 
	the functor    $L\colon \Mix\to \LMod$ and its inverse $R\colon \LMod\to\Mix$ preserve quasi-isomorphisms by Remark \ref{lkghòadshò}. This yields an equivalence of $\infty$-categories 
	$$\Nerve_{}(\Mix)[W_{\mathrm{mix}}^{-1}]\to \Nerve_{}(\LMod)[W^{-1}]$$
	that proves the statement.
\end{proof}

\begin{rem}
	The homotopy category of the stable $\infty$-category  $\Mix_\infty$ is canonically equivalent to the derived category $\D(\Lambda)$ of the dg-algebra $\Lambda$.
\end{rem}

We conclude the subsection with the definition of Hochschild and cyclic homology of mixed complexes.
A mixed complex $(C,b,B)$ functorially determines a double chain complex  $\B C$ \cite[§2.5.10]{loday} by means of the differentials $b$ and $B$:
\begin{equation}\label{dcc}
\B C\coloneqq \left( \dots\xleftarrow{0} (C,b) \xleftarrow{B} (C[-1],b_{C[-1]}) \xleftarrow{B}\dots \xleftarrow{B} (C[-n],b_{C[-n]}) \xleftarrow{B}\dots \right);
\end{equation}
here, the chain complex $(C,b)$ is placed in bi-degree $(0,*)$, \emph{i.e.,} $\B C_{(0,*)}=(C_{*},b)$, and the chain complex  $(C[-n],b_{C[-n]})$,  placed in bi-degree $(n,*)$, is the  chain complex $(C,b)$ shifted by $-n$, hence $\B C_{(p,q)}=C_{q-p}$ for $p\geq 0$ and $\B C_{(p,q)}=0$ for $p<0$. The total chain complex $\Tot (\B C)$, functorially associated to the double chain complex $\B C$, is the chain complex defined in degree $n$ by $\Tot_n (\B C)=\bigoplus_{i\geq 0} C_{n-2i} $ with differential $d$ acting as follows: $d(c_n,c_{n-2},\dots)\coloneqq (bc_n + Bc_{n-2},\dots).$

Let $\bCh$ be the category  of chain complexes over $k$.
Consider the forgetful functor 
\begin{equation}\label{forgetf}
\mbox{forget}\colon\Mix\to \bCh
\end{equation}
sending a mixed complex $(M,b,B)$ to its underlying chain complex $(M,b)$, and the functor
\begin{equation}\label{totfunct}
\Tot(\B -)\colon \Mix \to \bCh
\end{equation}
just described above.

\begin{df}\cite[Sec.\,1]{kassel}\label{HHC}
	Let $(C,b,B)$ be a mixed complex. The \emph{Hochschild homology} $\HH_*(C)$ of  $(C,b,B)$ is  the homology of the underlying chain complex $(C,b)$. Its \emph{cyclic homology} $\HC_*(C)$  is the homology of the associated chain complex $\Tot(\B C)$. 
\end{df}

We remark that this definition agrees with the classical definition of Hochschild and cyclic homology of algebras \cite{kassel,loday}.

\subsection{Keller's cyclic homology }\label{secKeller}

Let $k$ be a commutative ring with identity and let $A$ be a $k$-algebra. Then, one can associate to $A$ a cyclic module $Z_*(A)$ \cite{goodwillie} (\emph{i.e.,} a cyclic object in the category of $k$-modules) defined in degree $n$ as the $(n+1)$-th tensor product of $A$ over $k$. In the same way, one can construct a cyclic module out of an additive category $\A$ \cite[Def.~2.1.1]{mccarthy}. We present these constructions in the more general setting of dg-categories.

\begin{df}\cite{keller}\label{acn}
	Let $\CC$ be a small dg-category over  $k$.
	The \emph{additive cyclic nerve} of  $\mathcal{C}$ is the cyclic $k$-module defined  by:
	\[
	\CN_n(\mathcal{C}) \coloneqq \bigoplus \Hom_\CC(C_1,C_0)\otimes\Hom_\CC(C_2,C_1)\otimes\dots\otimes \Hom_\CC(C_0,C_n)
	\] 
	where the sum runs over all the objects $(C_0,C_1,\dots,C_n)$ in $\mathcal{C}^{n+1}$. The face and degeneracy maps, and the cyclic action, are defined as follows:
	\begin{align*}
	d_i(f_0\otimes\dots\otimes f_n)=&
	\begin{cases}
	\; f_0\otimes f_1\otimes\dots\otimes f_i\circ f_{i+1}\otimes\dots\otimes f_n & \quad \text{ if } 0\leq i\leq n-1, \\
	\; (-1)^{n+\sigma} f_n\circ f_0\otimes f_1\otimes\dots\otimes f_{n-1} & \quad \text{ if } i=n,
	\end{cases}
	\\
	s_i(f_0\otimes\dots\otimes f_n)=&
	\begin{cases}
	\; f_0\otimes f_1\otimes\dots\otimes f_i\otimes \mathrm{id}_{C_{i+1}}\otimes f_{i+1}\otimes \dots\otimes f_n & \quad \text{ if } 0\leq i\leq n-1, \\
	\;  f_0\otimes f_1\otimes\dots\otimes f_n\otimes \mathrm{id}_{C_0} & \quad \text{ if } i=n,
	\end{cases}
	\\
	t(f_0\otimes \dots\otimes f_n)=&\quad(-1)^{n+\sigma}(f_n\otimes f_0\otimes\dots\otimes f_{n-1}),
	\end{align*}
	where $\sigma=(\deg f_n)(\deg f_{n-1}+\dots+\deg f_0)$.
\end{df}
We get a covariant  functor from the category of small dg-categories over $k$ to the category of cyclic $k$-modules.  To every cyclic $k$-module $M$, we can further associate a mixed complex by letting $b\colon M_n\to M_{n-1}$ be the alternating sum 
\begin{equation}\label{ksugalaska}
b\coloneqq\sum_{i=0}^n(-1)^i d_i
\end{equation}
of face maps, and by defining the
cochain map $B\colon M_n\to M_{n+1}$ as the composition 
\begin{equation}\label{ksugalaskaB}
B\coloneqq (-1)^{n+1}(1-t_{n+1})sN.\end{equation} Here, $s$ denotes the extra degeneracy 
$s=(-1)^{n+1}t_{n+1}s_n\colon M_n\to M_{n+1}$  and $N\coloneqq\sum_{i=0}^n t_{n+1}^i$.

\begin{rem}\label{mixtodg}
	Let $M$ be a cyclic module. Then, the triple $(M,b,B)$, where $b$ and $B$ are the differentials \eqref{ksugalaska} and \eqref{ksugalaskaB}, respectively,  is a mixed complex. Morphisms of cyclic modules commute with the  face and the  degeneracy maps and with the cyclic operators;  they yield in this way morphisms of mixed complexes and a functor from the category of  cyclic modules to the category of mixed complexes.
\end{rem}

\begin{df}\label{mixdg} \cite[Def.\,1.3]{keller}
	We denote by
	\begin{equation*}
	\fMix\colon \dgcat_{k}\to \Mix
	\end{equation*}
	the functor from the category of small dg-categories over $k$ to the category of mixed complexes 
	defined as composition of the additive cyclic nerve functor of Definition \ref{acn} with the functor of Remark~\ref{mixtodg}.
\end{df}

Thanks to the work of Keller, we know that this functor enjoys many useful properties, among others agreement, additivity and localization \cite{keller}.  As we  work in the context of $\infty$-categories, we will spell them out in this language. 

From now on we assume that $k$ is a field. 
The $\infty$-category of small  dg-categories
$
\dgcat_{k,\infty}\coloneqq \Nerve(\dgcat_{k})[W_{\text{Morita}}^{-1}]
$
is the localization at the  class $W_{\text{Morita}}$  of Morita equivalences.
By \cite[Thm.~1.5]{keller}, the functor $\fMix$ of Definition \ref{mixdg} sends Morita equivalences of dg-categories 
to quasi-isomorphisms of mixed complexes 
and  descends to a functor 
\begin{equation*}\label{functorMix}
\begin{tikzcd}
\dgcat_{k} \arrow{d}{\loc} \arrow{r}{\fMix} & \Mix\arrow{d}{\loc}\\  \dgcat_{k,\infty}\arrow{r}{\fMix} &  \Mix_{\infty}% \arrow{r}{N_{\text{dg}}} & \LMod_\infty
\end{tikzcd}
\end{equation*}
between  the localizations.
Keller's Localization Theorem \cite[Thm.~1.5]{keller} can then be summarized as follows:

\begin{thm}\cite[Thm.\,1.5]{keller}\label{mainkeller}
	Let $k$ be a field.	The  functor  $ \fMix\colon \mathbf{dgcat}_k\to  \Mix_{\infty}$ satisfies the following:
	\begin{enumerate}
		\item\label{kjjdkzlsg} it sends equivalences of small dg-categories to equivalences of mixed complexes;
		\item it commutes with filtered colimits;
		\item it sends short exact sequences $\A\to\B\to\CC$ of dg-categories to cofiber sequences of $\Mix_{\infty}$.
	\end{enumerate}
	Moreover, if $A$ is a $k$-algebra,  there is an equivalence of mixed complexes
	\[
	\fMix(A)\to \fMix(\mathrm{proj}\,A)\] where $\mathrm{proj}\,A$ is the additive category of finitely generated projective modules.
\end{thm}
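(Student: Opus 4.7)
The plan is to deduce each of the four properties from Keller's original results \cite{keller}, translated from the $1$-categorical derived setting into the $\infty$-categorical language used here. All four items ultimately rest on Keller's Theorem~1.5 together with an analysis of how the $1$-categorical functor $\fMix\colon \dgcat_k\to \Mix$ of Definition~\ref{mixdg} interacts with the localizations appearing on both sides.

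For item (1), I note that every equivalence of small dg-categories is in particular a Morita equivalence, and by Keller's theorem \cite[Thm.~1.5]{keller} the functor $\fMix$ sends Morita equivalences to quasi-isomorphisms of mixed complexes. Since the class $W_{\mathrm{mix}}$ at which we localize $\Mix$ to obtain $\Mix_\infty$ is exactly the class of quasi-isomorphisms, the universal property of the $\infty$-categorical localization produces the dashed functor $\fMix\colon\dgcat_{k,\infty}\to\Mix_\infty$ displayed before the statement, and it necessarily carries equivalences to equivalences.

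For item (2), the strategy is first to verify the filtered-colimit property at the $1$-categorical level and then to lift it to the localizations. On the nose, the additive cyclic nerve $\CN_n(\CC)$ of Definition~\ref{acn} is a direct sum over $\CC^{n+1}$ of tensor products of $\Hom$-complexes over $k$. Filtered colimits in $\dgcat_k$ are computed on objects and on hom-complexes separately, and both $\otimes_k$ and $\bigoplus$ commute with filtered colimits of $k$-modules; therefore $\fMix\colon\dgcat_k\to\Mix$ preserves filtered colimits. To promote this to $\infty$-categories, I would argue that filtered colimits in $\Mix$ preserve quasi-isomorphisms, because filtered colimits of $\Lambda$-dg-modules are exact, and similarly at the level of dg-categories using the combinatorial model structure of Theorem~\ref{thmmorita}; consequently filtered colimits in both $\dgcat_{k,\infty}$ and $\Mix_\infty$ are modeled by the corresponding $1$-categorical filtered colimits, and the commutation statement descends.

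Item (3) is essentially a restatement of \cite[Thm.~1.5]{keller} in our setup: Keller proves that a short exact sequence of dg-categories in the sense of Definition~\ref{kjasfkhasgfksal} is sent by $\fMix$ to a distinguished triangle in the derived category $\D(\Lambda)$. By Proposition~\ref{mixstconpl}, $\Mix_\infty$ is stable with homotopy category $\D(\Lambda)$, so distinguished triangles correspond to cofiber sequences in $\Mix_\infty$. The agreement statement $\fMix(A)\simeq\fMix(\mathrm{proj}\,A)$ then follows from item (1) combined with the standard fact that the full embedding of the one-object dg-category with endomorphism algebra $A$ into $\mathrm{proj}\,A$ is a Morita equivalence. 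The step I expect to be most delicate is item (2), specifically the upgrade from a $1$-categorical preservation of filtered colimits to a genuine commutation of the induced $\infty$-functor $\fMix\colon\dgcat_{k,\infty}\to\Mix_\infty$ with filtered colimits, which requires knowing that filtered colimits in both combinatorial model categories already compute the corresponding $\infty$-categorical homotopy colimits.
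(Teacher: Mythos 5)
Your proposal is correct and follows essentially the same route as the paper, which states this result as an import of Keller's Theorem~1.5 without giving a proof, relying only on the surrounding observations that $\fMix$ sends Morita equivalences to quasi-isomorphisms of mixed complexes, that $\fMix\circ\loc$ preserves filtered colimits, and that cofiber sequences in the stable $\infty$-category $\Mix_\infty$ are detected in its homotopy category $\D(\Lambda)$. Your more detailed justification of each item (including flagging the upgrade of the filtered-colimit property from the $1$-categorical to the $\infty$-categorical level as the delicate step) is consistent with, and a reasonable fleshing-out of, the paper's reasoning.
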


Observe that the functor $\fMix\circ\loc$ preserves filtered colimits. 
By Proposition \ref{mixstconpl}, the $\infty$-category $\Mix_{\infty}$ is stable and cocomplete and cofiber sequences of $\Mix_{\infty}$ \cite[Def.~1.1.1.6]{HA} are detected in its homotopy category, \emph{i.e.,} in $\D(\Lambda)$.   
We observe here that Keller's theorem holds in a more general setting (for more general rings and for exact categories). However,  we only need these properties in the context of additive categories (over a field). Moreover, in this context, Keller's functor $\fMix$ is equivalent to the cyclic homology functor constructed by McCarthy \cite{mccarthy} (see also \cite[Lemma 3.4.4 \& Rmk.~3.4.5]{mythesis}).

\section{Equivariant coarse Hochschild and cyclic homology}\label{ahkbfkjabljavö}

For a fixed base field $k$ and group $G$, we define equivariant coarse Hochschild
$
\XHH_{k}^G
$
and cyclic homology 
$
\XHC_k^G
$
versions of the classical Hochschild and cyclic homology of $k$-algebras. This is achieved  by first studying an intermediate equivariant coarse homology theory $\XC_{k}^G$ with values in the $\infty$-category of mixed complexes. In the definition of $\XC_{k}^G$, we employ  Keller's functor $\fMix\colon \mathbf{dgcat}_{k}\to \Mix$ in the context of $k$-linear categories, and we apply it to the category $V_k^G(X)$ of controlled objects; we  then define Hochschild/cyclic homology of a bornological coarse space $X$ as Hochschild/cyclic homology of $\mathrm{Mix}(V_k^G(X))$. We conclude the section with some properties of these homology theories and of the associated assembly maps.

\subsection{The equivariant coarse homology theory $\XC_{k}^G$}

Let $k$ be a field,   $\mathbf{Cat}_{k}$ the category of small $k$-linear categories,  $V_{k}^{G}\colon G\BC\to\Cat_k$  the functor of Remark   \ref{ljsahaòlsghgjòal}, let $\fMix\colon \bdgcat_{k}\to \Mix$ be the functor of Definition~\ref{mixdg}, $\iota\colon \Cat_{k}\to\dgcat_{k}$ the functor sending a $k$-linear category to its associated dg-category and $\loc$  the localization functor $\loc\colon \Mix\to\Mix_\infty$.

\begin{df}\label{defXHH}
	We denote by $\XC_k^G$ the following  functor
	\begin{equation*}
	\begin{tikzcd}
	\XC_k^\G\colon \G\BC \arrow{r}{V_{k}^{\G}} & \mathbf{Cat}_{k}\arrow{r}{\iota}&\dgcat_{k} \arrow{r}{\fMix} & \Mix\arrow{r}{\loc}&\Mix_\infty  
	\end{tikzcd}
	\end{equation*}
	from the category of $G$-bornological coarse spaces to the $\infty$-category of mixed complexes.
\end{df}

The proof that the functor $\XC_k^G$ satisfies the axioms of Definition \ref{hcht} describing an equivariant coarse homology theory follows the ideas of coarse algebraic $K$-homology \cite[Sec.~8]{bunkeeq} and does not require  assumptions on $G$ or $k$. For every $G$-set $X$ and additive category with $G$-action $\bA$, the category $V_\bA^G(X_{\text{min,max}}\otimes \Gcan)$ is  equivalent to the  additive category $\bA *_G X$ \cite[Def.~2.1]{BARTELS2007337}  by \cite[Def.~8.21 \& Prop.~8.24]{bunkeeq}. As a consequence, $\XC_{k}^G(X_{\text{min,max}}\otimes \Gcan)\simeq \mathrm{Mix}(\bA*_G X)$, and the functor $\XC_{k}^G$, when applied to such spaces, can be described as the mixed complex of a suitable additive category.  In the case of the group $G$, this result says that the category of controlled objects  $V_k^G(\Gcan)$ is equivalent to the category  of finitely generated free $k[G]$-modules (see Proposition \ref{iso}), which, together with Proposition \ref{pointHC}, further justifies the use of Definition \ref{defXHH} in the construction of a  cyclic homology theory for $G$-bornological coarse spaces.

The main result of the section is the following theorem:
\begin{thm}\label{echt}
	The  functor \[
	\begin{tikzcd}	
	\XC_k^\G\colon \G\BC \arrow{r}& \Mix_\infty 
	\end{tikzcd}
	\]
	is a $\G$-equivariant $\Mix_\infty$-valued coarse homology theory.
\end{thm}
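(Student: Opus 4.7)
The plan is to verify the four axioms of Definition \ref{hcht} one at a time, by combining the analogous properties of the functor $V_k^G\colon G\BC\to\Cat_k$ (established in \cite{bunkeeq}) with the structural properties of Keller's functor $\fMix$ collected in Theorem \ref{mainkeller}. For coarse invariance, an equivalence of $G$-bornological coarse spaces admits an inverse whose compositions with it are close to the identity; by Lemma \ref{coarseinvariance} the two functors induced by close morphisms are naturally isomorphic, so $V_k^G$ sends equivalences in $G\BC$ to equivalences in $\Cat_k$. Post-composing with $\iota$ and $\fMix$, which sends equivalences of small dg-categories to equivalences of mixed complexes by Theorem \ref{mainkeller}(1), yields coarse invariance of $\XC_k^G$. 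For u-continuity, I would combine Remark \ref{colimits}, which expresses $V_k^G(X)$ as the filtered colimit $\colim_{\U\in\CC^G} V_k^G(X_\U)$, with Theorem \ref{mainkeller}(2), stating that $\fMix$ commutes with filtered colimits; the functors $\iota$ and $\loc$ clearly respect such colimits, so the whole composition $\XC_k^G$ commutes with colimits over $\CC^G$.

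For flasqueness, Lemma \ref{flasquelemma} implies that if $X$ is a flasque $G$-bornological coarse space, then $V_k^G(X)$ is a flasque additive category, \emph{i.e.,} admits an endofunctor $S$ with $\id\oplus S\cong S$. Since $\fMix$ is additive (part of the package of Theorem \ref{mainkeller}; see also \cite{keller}), this yields in $\Mix_\infty$ an equivalence $\XC_k^G(X)\oplus \fMix(\iota S)\simeq \fMix(\iota S)$, and stability of $\Mix_\infty$ (Proposition \ref{mixstconpl}) forces $\XC_k^G(X)\simeq 0$. The vanishing on the empty space is immediate, since $V_k^G(\emptyset)$ is the zero category by condition~(1) of Definition \ref{controbj}.

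The remaining and most delicate axiom is coarse excision. The plan is to follow the strategy used for coarse algebraic $K$-homology in \cite[Sec.~8]{bunkeeq}: for a $G$-equivariant complementary pair $(Z,\mathcal{Y})$ on $X$, produce a short exact sequence of $k$-linear (hence dg-)categories
\[
V_k^G(Z\cap\mathcal{Y})\longrightarrow V_k^G(Z)\oplus V_k^G(\mathcal{Y})\longrightarrow V_k^G(X)
\]
(possibly after passing to the filtered colimits over $i\in I$ and $\U\in\CC^G$) in the sense of Definition \ref{kjasfkhasgfksal}. The key point is that a $G$-equivariant $X$-controlled object decomposes via its support according to whether this meets $Z$, lies in some $Y_i$, or is detected by their intersection; the hypothesis $Z\cup Y_i=X$ for some $i$ controls the overlap and makes the decomposition functorial after taking the relevant filtered colimit. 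Once such a short exact sequence is produced, Theorem \ref{mainkeller}(3) converts it into a cofiber sequence in $\Mix_\infty$, and stability then turns this cofiber sequence into the required push-out square.

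The main obstacle will be producing the short exact sequence of $k$-linear categories demanded by Definition \ref{kjasfkhasgfksal}: one must check exactness at the level of bounded derived categories in the sense of Verdier, which is the same subtle step that appears in the $K$-theory case. I would model this step closely on the proof in \cite[Sec.~8]{bunkeeq}, replacing non-connective $K$-theory by Keller's $\fMix$ throughout and invoking u-continuity of $V_k^G$ (Remark \ref{colimits}) to reduce, if necessary, to a single entourage before extracting the localization sequence.
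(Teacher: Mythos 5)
Your handling of coarse invariance and u-continuity coincides with the paper's (Lemma \ref{coarseinvariance} plus Theorem \ref{mainkeller}(1), and Remark \ref{colimits} plus Theorem \ref{mainkeller}(2)). But there are two genuine problems. For flasqueness, you deduce $\XC_k^\G(X)\simeq 0$ by cancelling in an equivalence of \emph{objects} $\XC_k^\G(X)\oplus \fMix(\iota S)\simeq \fMix(\iota S)$. Apart from the fact that $S$ is an endofunctor of $V_k^\G(X)$, so $\fMix(\iota S)$ is an endomorphism of $\fMix(V_k^\G(X))$ rather than an object, object-level cancellation is simply false in a stable $\infty$-category: $A\oplus B\simeq B$ does not force $A\simeq 0$ (take $B=\bigoplus_{n\in\N}A$). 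The correct argument, and the one the paper uses via \cite[Thm.~2.3.11]{schlichting}, is at the level of morphisms: additivity gives $\fMix(\id)\oplus\fMix(S)\simeq\fMix(\id\oplus S)\simeq\fMix(S)$ as endomorphisms of $\XC_k^\G(X)$, and since homotopy classes of endomorphisms form an abelian group one cancels $\fMix(S)$ to get $\XC_k^\G(\id)\simeq 0$, whence $\XC_k^\G(X)\simeq 0$.

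For coarse excision, the sequence $V_k^\G(Z\cap\Y)\to V_k^\G(Z)\oplus V_k^\G(\Y)\to V_k^\G(X)$ you propose is not what the argument in \cite{bunkeeq} produces, and you give no reason why it should be a short exact sequence of dg-categories in the sense of Definition \ref{kjasfkhasgfksal}: the second functor is not in any evident way a Verdier quotient by the image of the first, and exactness of the induced sequence of bounded derived categories is exactly the point that needs proof. The paper instead uses two localization sequences coming from Karoubi filtrations (Lemma \ref{karoubincl}), namely $V_k^\G(Z\cap\Y)\to V_k^\G(Z)\to V_k^\G(Z)/V_k^\G(Z\cap\Y)$ and $V_k^\G(\Y)\to V_k^\G(X)\to V_k^\G(X)/V_k^\G(\Y)$; each is a short exact sequence of dg-categories by Schlichting's results, hence is sent by $\fMix$ to a cofiber sequence in $\Mix_\infty$ by Theorem \ref{mainkeller}(3), and the desired push-out square follows because the comparison functor between the two quotient categories is an equivalence (Lemma \ref{equiv}), together with commutation of $\fMix$ with the filtered colimit defining $V_k^\G(\Y)$. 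Your support-decomposition heuristic points in the right direction, but without the Karoubi-filtration/quotient mechanism (or an actual verification that your three-term sequence is exact on derived categories) the key step of the proof is missing.
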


\begin{proof}
	The category $\Mix_{\infty}$ is stable and cocomplete by Proposition \ref{mixstconpl}. We prove below that the functor ${\XC}_{k}^G$ satisfies  coarse invariance (see Proposition \ref{prop1}),  vanishing on flasque spaces (see Proposition~\ref{prop2}),  u-continuity (see Proposition \ref{prop3}) and 
	coarse excision (see Theorem \ref{excision}), 
	\emph{i.e.,} the axioms describing an equivariant coarse homology theory. 
\end{proof}

\begin{prop}\label{prop1}
	The functor ${\XC}_{k}^G \colon G\BC\to\Mix_\infty$ satisfies coarse invariance.
\end{prop}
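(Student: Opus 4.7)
The plan is to unwind the definition $\XC_k^G = \loc\circ\fMix\circ\iota\circ V_k^G$ and verify that each functor in the composition preserves the appropriate notion of equivalence, so that a coarse equivalence $f\colon X\to X'$ in $G\BC$ is carried to an equivalence in $\Mix_\infty$.

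First I would begin with an equivalence $f\colon X\to X'$ of $G$-bornological coarse spaces and let $g\colon X'\to X$ be an inverse up to closeness, so $g\circ f$ and $f\circ g$ are close to the respective identities. Applying Lemma \ref{coarseinvariance} to each of these pairs yields natural isomorphisms
\[
V_k^G(g)\circ V_k^G(f)\cong \id_{V_k^G(X)}\quad\text{and}\quad V_k^G(f)\circ V_k^G(g)\cong \id_{V_k^G(X')}
\]
in $\Cat_k$. Thus $V_k^G(f)$ is an equivalence of $k$-linear categories.

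Next, I would invoke the fact that the inclusion $\iota\colon\Cat_k\to\dgcat_k$ sends an equivalence of $k$-linear categories to an equivalence of the associated dg-categories (a $k$-linear equivalence is a dg-equivalence when both categories are viewed as dg-categories concentrated in degree zero), and in particular to a Morita equivalence in $\dgcat_k$. Then by Theorem \ref{mainkeller}(1), the functor $\fMix\colon\dgcat_k\to\Mix_\infty$ sends such equivalences to equivalences of mixed complexes. Composing, $\XC_k^G(f)$ is an equivalence in $\Mix_\infty$.

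There is no real obstacle here beyond bookkeeping: the heavy lifting has already been done in Lemma \ref{coarseinvariance} (coarse invariance of the controlled-object functor) and in Theorem \ref{mainkeller}(1) (equivalences go to equivalences under $\fMix$). The only thing one must be slightly careful about is that the natural isomorphism produced by Lemma \ref{coarseinvariance} is an isomorphism of $k$-linear functors, i.e.\ lives in $\Cat_k$ and not merely in $\Add$, but this is immediate from the construction in Remark \ref{ljsahaòlsghgjòal}.
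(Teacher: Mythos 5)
Your proof is correct and follows essentially the same route as the paper: reduce to the statement that $f_*\colon V_k^G(X)\to V_k^G(X')$ is an equivalence of ($k$-linear, hence dg-) categories via Lemma \ref{coarseinvariance}, then apply Theorem \ref{mainkeller}(1). You merely spell out more explicitly how closeness of $g\circ f$ and $f\circ g$ to the identities yields the natural isomorphisms exhibiting $f_*$ as an equivalence, which the paper leaves implicit.
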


\begin{proof}
	If $f\colon X\to Y$ is a coarse equivalence of $\G$-bornological coarse spaces, then it induces a natural equivalence 
	$f_*\colon V_k^\G(X)\to V_k^\G(Y)$ by Lemma \ref{coarseinvariance}. 
	Keller's functor  $\fMix$ sends equivalences of dg-categories to equivalences of mixed complexes by Theorem~\ref{mainkeller}~\eqref{kjjdkzlsg}. 
	Hence, the functor $f_{*}$ induces an equivalence ${ \XC}_k^G(X)\xrightarrow{\sim}{ \XC}_k^G(Y)$  in $\Mix_{\infty}$, \emph{i.e., } the functor ${\XC}_{k}^G$ is coarse invariant. 
\end{proof}

Recall the definition of flasque spaces in Definition \ref{flasque}. 

\begin{prop}\label{prop2}
	The functor ${\XC}_{k}^G \colon G\BC\to\Mix_\infty$  vanishes on flasque spaces.
\end{prop}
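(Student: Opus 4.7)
The plan is to run the standard Eilenberg swindle argument in the stable $\infty$-category $\Mix_\infty$, using the additivity of Keller's functor together with Lemma~\ref{flasquelemma}. First, assume $X$ is a flasque $G$-bornological coarse space witnessed by a morphism $f\colon X\to X$ as in Definition~\ref{flasque}. By Lemma~\ref{flasquelemma}, the additive category $V_k^G(X)$ is flasque: there exist an endofunctor $S\colon V_k^G(X)\to V_k^G(X)$ and a natural isomorphism $\id_{V_k^G(X)}\oplus S\cong S$ of $k$-linear functors. Passing through $\iota$ and applying $\fMix$ yields an induced endomorphism $\XC_k^G(S)$ of $\XC_k^G(X)$ in $\Mix_\infty$.

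The next step is to extract additivity of $\fMix$ from Theorem~\ref{mainkeller}. A direct sum decomposition $\id\oplus S\cong S$ of $k$-linear endofunctors gives, after inclusion into dg-categories, a split short exact sequence of dg-categories whose image under $\fMix$ is a cofiber sequence in the stable cocomplete $\infty$-category $\Mix_\infty$ (Proposition~\ref{mixstconpl}). In a stable $\infty$-category, split cofiber sequences are biproduct sequences, so on the level of endomorphisms of $\XC_k^G(X)$ in the triangulated homotopy category $\D(\Lambda)$ we obtain
\[
\XC_k^G(\id_{V_k^G(X)}) + \XC_k^G(S) \simeq \XC_k^G(S).
\]
Because $\XC_k^G$ is a composition of functors, $\XC_k^G(\id_{V_k^G(X)})=\id_{\XC_k^G(X)}$, and subtracting $\XC_k^G(S)$ from both sides (legitimate in the abelian hom-group of the triangulated category) gives $\id_{\XC_k^G(X)}\simeq 0$. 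Hence $\XC_k^G(X)\simeq 0$ in $\Mix_\infty$.

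The only delicate point is making precise the additivity statement in the second step: one must check that the coproduct $\id\oplus S$ of $k$-linear functors into $V_k^G(X)$ really does fit into a split short exact sequence of dg-categories in the sense of Definition~\ref{kjasfkhasgfksal}, so that Theorem~\ref{mainkeller}(3) applies and produces the desired biproduct decomposition of $\fMix(\id\oplus S)$. Once this is in hand, the argument is formal, as it is essentially the classical Eilenberg swindle for additive invariants transported to the $\infty$-categorical setting of $\Mix_\infty$. Everything else (coarse invariance from Lemma~\ref{coarseinvariance} together with Theorem~\ref{mainkeller}(1), u-continuity from Remark~\ref{colimits} together with Theorem~\ref{mainkeller}(2)) proceeds along entirely analogous lines, and no hypothesis on $G$ or $k$ beyond those already fixed is needed.
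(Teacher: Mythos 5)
Your overall strategy is the same Eilenberg swindle the paper runs: Lemma~\ref{flasquelemma} gives $S$ with $\id\oplus S\cong S$, additivity of $\fMix$ turns this into $\id + \XC_k^G(S)\simeq \XC_k^G(S)$ in the hom-group of the homotopy category, and cancellation gives $\id\simeq 0$. The difference, and the gap, is in how you justify the additivity statement $\fMix(\id\oplus S)\simeq \fMix(\id)\oplus\fMix(S)$. You propose to deduce it from Theorem~\ref{mainkeller}(3) by claiming that the natural isomorphism $\id\oplus S\cong S$ of endofunctors ``gives, after inclusion into dg-categories, a split short exact sequence of dg-categories.'' As stated this does not parse: a direct sum decomposition of a single endofunctor of $V_k^G(X)$ is not a sequence of dg-categories in the sense of Definition~\ref{kjasfkhasgfksal}, and you never say which categories are supposed to form the sequence. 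You flag this yourself as ``the only delicate point,'' but it is exactly the content of the step, and you do not close it.

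What is actually needed here is the Additivity Theorem for the invariant $\fMix$ on additive (or exact) categories: a direct sum of functors induces the sum of the induced morphisms. This is a genuine theorem, not a formal consequence of sending short exact sequences of dg-categories to cofiber sequences; the standard proof goes through the category of (split) extensions $\E(\A)$ and the split exact sequence $\A\to\E(\A)\to\A$ of \emph{categories}, not of functors. The paper sidesteps your construction entirely by citing Schlichting's additivity result (Thm.~2.3.11 of that reference) to get $\fMix(\id)\oplus\fMix(S)\simeq\fMix(\id\oplus S)$ directly. If you want to keep your route, you must either cite such an additivity theorem or actually build the split exact sequence of dg-categories and explain how the functor decomposition $\id\oplus S\cong S$ is encoded by a functor out of the extension category; as written, the argument has a hole at its load-bearing step. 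The rest of your write-up (the cancellation in the triangulated homotopy category, and the remarks on the other axioms) is fine.
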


\begin{proof}
	By Lemma \ref{flasquelemma}, the  category $V_k^\G(X)$ is a flasque category, hence there exists an endofunctor 
	$S\colon V_k^\G(X)\to V_k^\G(X)$  such that $\mathrm{id}_{V_k^\G(X)}\oplus S\cong S$.  By  \cite[Thm.~2.3.11]{schlichting} (see also \cite[Thm.~3.3.5]{mythesis}), the morphisms  $\fMix(\mathrm{id}) \oplus \fMix(S)$ and $\fMix(\mathrm{id} \oplus S)\cong \fMix(S)$ are equivalent in $\Mix_{\infty}
	$. This means that  the morphism  $${\XC}_{k}^G(\mathrm{id})\colon {\XC}_{k}^G(X)\to {\XC}_{k}^G(X)$$ 
	is equivalent to the $0$-morphism and that  ${\XC}_{k}^G(X)\simeq 0$.
\end{proof}

\begin{prop}\label{prop3}
	The functor ${\XC}_{k}^G \colon G\BC\to\Mix_\infty$  is  $u$-continuous.
\end{prop}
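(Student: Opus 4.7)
The plan is to chase the filtered colimit $X \simeq \colim_{\U\in\CC^G} X_\U$ through each of the functors composing $\XC_k^G = \loc \circ \fMix \circ \iota \circ V_k^G$, verifying that each step preserves the relevant filtered colimit, and then invoke Theorem~\ref{mainkeller}(2) at the end.

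First, by Remark~\ref{colimits}, the canonical maps $X_\U \to X$ induce an isomorphism
\[
V_k^G(X) \;\cong\; \colim_{\U\in\CC^G} V_k^G(X_\U)
\]
in the $1$-category $\Cat_k$ of small $k$-linear categories. This is the starting point: the entourage-by-entourage description of controlled objects is, on the nose, a filtered colimit of $k$-linear categories along inclusions. Next I would argue that the embedding $\iota\colon \Cat_k \to \dgcat_k$ preserves filtered colimits, which is immediate from the levelwise nature of colimits of (dg-)categories (the object set is a filtered colimit of sets, and the morphism $k$-modules, viewed as complexes concentrated in degree zero, are computed as filtered colimits of $k$-modules).

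The next step is to verify that the composite $\loc \circ \iota$ sends the $1$-categorical filtered colimit above to a filtered colimit in the $\infty$-category $\dgcat_{k,\infty}$. Here one uses Theorem~\ref{thmmorita}: since $\dgcat_k$ carries a combinatorial model structure for the Morita equivalences, and filtered colimits of Morita equivalences are again Morita equivalences (filtered colimits of triangulated derived categories interact well with equivalences), the localization $\loc$ sends filtered $1$-colimits of cofibrant objects to the corresponding $\infty$-categorical filtered colimits; alternatively this follows from the general fact that the underlying $\infty$-category of a combinatorial model category has all filtered colimits and that these are represented by filtered colimits in the model category \cite[Prop.~1.3.4.22]{HA}. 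Hence
\[
\iota\bigl(V_k^G(X)\bigr) \;\simeq\; \colim_{\U\in\CC^G} \iota\bigl(V_k^G(X_\U)\bigr)
\]
in $\dgcat_{k,\infty}$.

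Finally, apply Theorem~\ref{mainkeller}(2): $\fMix\colon \dgcat_{k,\infty}\to \Mix_\infty$ commutes with filtered colimits. Assembling the three preservation statements yields
\[
\XC_k^G(X) \;\simeq\; \colim_{\U\in\CC^G} \XC_k^G(X_\U),
\]
which is exactly u-continuity. The main obstacle, and the only point requiring genuine care, is the middle step: one must justify that the strict (i.e.\ $1$-categorical) filtered colimit description of $V_k^G(X)$ remains a filtered colimit after localizing at Morita equivalences, as opposed to merely a colimit diagram in the $1$-category $\dgcat_k$. Once that is in place, Keller's commutation with filtered colimits finishes the proof.
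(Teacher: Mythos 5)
Your proof is correct and follows essentially the same route as the paper: start from Remark~\ref{colimits} ($V_k^G(X)\cong\colim_{\U\in\CC^G}V_k^G(X_\U)$ in $\Cat_k$) and finish with Keller's commutation with filtered colimits. The only difference is that you carefully factor the middle step through the Morita localization $\dgcat_{k,\infty}$, whereas the paper folds this into the single observation that the composite $\loc\circ\fMix\colon\dgcat_k\to\Mix_\infty$ preserves filtered colimits (ultimately because the additive cyclic nerve is built from tensor products and direct sums and filtered colimits of complexes over a field are exact); your extra care at that point is a legitimate refinement rather than a different argument.
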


\begin{proof}
	Let $X$  be a $\G$-bornological coarse space, and let $\CC^\G$ be the poset of $\G$-invariant controlled sets. By Remark \ref{colimits}, there is an equivalence  $V_k^\G(X)\simeq \colim_{\U\in\CC^\G}V_k^\G(X_\U) $ of $k$-linear  categories, hence of dg-categories. The functor $\fMix$ commutes with filtered colimits,
	and we get the equivalence $${\XC}_{k}^G(X)\simeq \colim_{\U\in\CC^\G}{\XC}_{k}^G(X_\U)$$ in $\Mix_{\infty}$,  which shows that the functor ${\XC}_{k}^G$ is u-continuous.
\end{proof}

\begin{thm}\label{excision}
	The functor ${\XC}_{k}^G \colon G\BC\to\Mix_\infty$ satisfies coarse excision.
\end{thm}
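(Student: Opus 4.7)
The plan is to reduce coarse excision to Keller's Localization Theorem~\ref{mainkeller}(3), which asserts that $\fMix$ sends short exact sequences of dg-categories to cofiber sequences in $\Mix_\infty$. Since $\Mix_\infty$ is stable (Proposition~\ref{mixstconpl}), the pushout property of the square is equivalent to the assertion that the induced map between horizontal cofibers is an equivalence. The base case $\XC_k^G(\emptyset)\simeq 0$ follows immediately, because the normalization condition $M(\emptyset)\cong 0$ in Definition~\ref{controbj} forces $V_k^G(\emptyset)$ to be equivalent to the trivial $k$-linear category, whose mixed complex is zero.

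For the pushout square, using that $\fMix$ preserves filtered colimits (Theorem~\ref{mainkeller}(2)) together with the identity $\XC_k^G(\mathcal{Y})\simeq \colim_i \XC_k^G(Y_i)$ (and analogously for $Z\cap\mathcal{Y}$), it suffices to work with a fixed index $i$ for which $Z\cup Y_i = X$. For every $\G$-invariant subset $Y$ of $X$, I would introduce the full dg-subcategory $V_k^G(X)\langle Y\rangle\subseteq V_k^G(X)$ of controlled objects whose value on bounded sets disjoint from a thickening of $Y$ vanishes, following the strategy used for coarse algebraic $K$-homology in \cite[Sec.~8]{bunkeeq}. The key technical ingredients are: (i) a Morita equivalence $V_k^G(Y_i)\simeq V_k^G(X)\langle Y_i\rangle$ (and similarly for $Z\cap Y_i$ inside $Z$), which is effected by the decomposition \eqref{g5giojgoigg5g45gfg} together with the local-finiteness condition of Definition~\ref{controbj}; (ii) short exact sequences of dg-categories in the sense of Definition~\ref{kjasfkhasgfksal},
\[
V_k^G(X)\langle Y_i\rangle \to V_k^G(X) \to V_k^G(X)/V_k^G(X)\langle Y_i\rangle,
\]
and the analogous one with $X$ replaced by $Z$ and $Y_i$ by $Z\cap Y_i$; and (iii) an equivalence of quotient dg-categories
\[
V_k^G(Z)/V_k^G(Z)\langle Z\cap Y_i\rangle \xrightarrow{\sim} V_k^G(X)/V_k^G(X)\langle Y_i\rangle,
\]
which must be extracted from the hypothesis $Z\cup Y_i = X$: every controlled object on $X$ is, modulo a summand supported on $Y_i$, isomorphic to one supported on $Z$.

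Applying Keller's Localization Theorem to the two short exact sequences in (ii) produces two cofiber sequences in $\Mix_\infty$; the equivalence in (iii) identifies their common cofiber; combining with (i) and passing to the filtered colimit in $i$ then yields the desired pushout square. The main obstacle is to establish (ii) and (iii): one must check that the candidate quotient dg-categories really fit into Verdier-exact sequences and that the comparison map in (iii) is a Morita equivalence. The analogous statements in the additive/Waldhausen-category setting used for coarse algebraic $K$-homology have been proved in \cite[Sec.~8]{bunkeeq}, so the essential work is to transport those arguments into the dg-categorical framework demanded by $\fMix$; the axioms of $V_k^G$ (the push-out condition and the finite-support condition of Definition~\ref{controbj}) ensure that the translation goes through without substantial modification.
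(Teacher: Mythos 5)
Your plan is correct and follows essentially the same route as the paper: both reduce coarse excision to Keller's Localization Theorem applied to short exact sequences of dg-categories arising from Karoubi filtrations, identify the two quotient categories, and pass to filtered colimits over the big family. The technical ingredients you flag as the main obstacles are exactly what the paper imports, namely that $V_k^G(\Y)\subseteq V_k^G(X)$ is a Karoubi filtration (\cite[Lemma 8.14]{bunkeeq}), that Karoubi filtrations yield short exact sequences of dg-categories (\cite[Ex.~1.8, Prop.~2.6]{Schlichting04}), and that the comparison functor on quotients is an equivalence (\cite[Prop.~8.15]{bunkeeq}); the paper works directly with the full subcategories $V_k^G(Y_i)$ and $V_k^G(\Y)$ rather than with your thickened support subcategories, but this is only a cosmetic difference.
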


Before giving the  proof of this theorem we first need some more terminology.

\begin{df}\cite{Kasprowski}
	A full additive subcategory $\A$ of an additive category $\cU$ is a \emph{Karoubi-filtration} if every diagram
	$
	X\to Y\to Z
	$
	in $\cU$, with $X,Z\in \A$, admits an extension
	\[
	\begin{tikzcd}
	X \arrow{d} \arrow{r} & Y \arrow{r} \arrow{d}{\cong} & Z \\
	A \arrow{r}{i} & A\oplus A' \arrow{r}{p} & A\arrow{u} 
	\end{tikzcd}
	\]
	for some object $A\in \A$.
\end{df}
By  \cite[Lemma 5.6]{Kasprowski}, this definition is equivalent to the classical one 
\cite{Karoubi1970,cp}.
If  $\A$ is a Karoubi-filtration of $\cU$, we can construct a quotient category $\cU/\A$. Its objects are the objects of $\cU$, and the morphisms sets are defined as follows:
\[
\Hom_{\cU/\A}(U,V)\coloneqq \Hom_\cU(U,V)/{\sim}
\]
where the relation identifies pairs of maps $U\to V$ whose difference factors through an object of $\A$.

Let $X$ be a $\G$-bornological coarse space and 
let $\Y=(Y_i)_{i\in I}$ be an equivariant big family on $X$ (see Definition \ref{complementary}). The bornological coarse space $Y_i$ is a subspace of $X$ with the induced bornology and coarse structure. The inclusion $Y_i\hookrightarrow X$ induces a functor $V_k^\G(Y_i)\to V_k^\G(X)$ which is injective on objects. The categories $V_k^\G(Y_i)$
and 
$
V_k^\G(\Y)\coloneqq\colim_{i\in I} V_k^\G(Y_i)
$
are  full subcategories of $V_k^\G(X)$.

\begin{lemma}\cite[Lemma 8.14]{bunkeeq}\label{karoubincl}
	Let $\Y$ be an equivariant big family on the $G$-bornological coarse space $X$.
	Then, the full additive subcategory $V_k^\G(\Y)$ of  $V_k^\G(X)$ is a Karoubi filtration.
\end{lemma}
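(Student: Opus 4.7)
The plan is to decompose the middle object using the pointwise direct-sum formula $N(B) \cong \bigoplus_{x \in B} N(\{x\})$ of \eqref{g5giojgoigg5g45gfg}: given any diagram $L \to N \to L'$ in $V_k^\G(X)$ with $L, L' \in V_k^\G(\Y)$, I would split $N$ as a biproduct $A \oplus A'$, where $A$ arises from the restriction of $N$ to a sufficiently large member $Y_{i_1}$ of the big family, chosen so that both arrows interact with $N$ only through the $Y_{i_1}$-summand.

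For the index selection, the filteredness of $\Y$ together with the colimit description $V_k^\G(\Y) = \colim_{i\in I} V_k^\G(Y_i)$ allows me to pick $i_0 \in I$ such that both $L$ and $L'$ are supported on $Y_{i_0}$, i.e., $L(\{x\}) = 0 = L'(\{x\})$ for $x \notin Y_{i_0}$. Since coarse structures are closed under inverses and finite unions, I may then pick a single symmetric $\G$-invariant entourage $\U \in \CC^\G$ such that both morphisms admit $\U$-controlled representatives in the sense of Definition \ref{Uequivmorh}. The big family axiom produces $i_1 \in I$ with $\U[Y_{i_0}] \subseteq Y_{i_1}$, and both $Y_{i_1}$ and its complement $X \setminus Y_{i_1}$ are $\G$-invariant since $\Y$ is an equivariant big family.

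I would then define $A, A' \in V_k^\G(X)$ by $A(B) \coloneqq N(B \cap Y_{i_1})$ and $A'(B) \coloneqq N(B \cap (X \setminus Y_{i_1}))$, with $\G$-equivariant structures restricted from $N$. The pushout condition of Definition \ref{controbj}(2), applied to the disjoint decomposition of each bounded $B$, yields a natural isomorphism $N \cong A \oplus A'$, and $A \in V_k^\G(Y_{i_1}) \subseteq V_k^\G(\Y)$ by construction. For the factorization, at each point $x \in X$ the component $L(\{x\}) \to N(\U[\{x\}])$ of $L \to N$ vanishes outside $Y_{i_0}$ by support, while for $x \in Y_{i_0}$ it maps into $N(\U[\{x\}])$ with $\U[\{x\}] \subseteq \U[Y_{i_0}] \subseteq Y_{i_1}$, hence inside $A$; so $L \to N$ factors through the inclusion $A \hookrightarrow A \oplus A' \cong N$. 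Dually, for $x \notin Y_{i_1}$ the symmetry of $\U$ forces $\U[\{x\}] \cap Y_{i_0} = \emptyset$ (else $x$ would lie in $\U[Y_{i_0}] \subseteq Y_{i_1}$), so the component $N(\{x\}) \to L'(\U[\{x\}])$ vanishes because $L'$ is supported on $Y_{i_0}$; hence $N \to L'$ annihilates $A'$ and factors through the projection $N \cong A \oplus A' \to A$. Stitching the two factorizations together gives the extension required by the Karoubi filtration definition.

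The main obstacle I anticipate is bookkeeping rather than conceptual: verifying that $A$, $A'$, and the induced factorizations are bona fide $\G$-equivariant controlled morphisms in $V_k^\G(X)$ with a common entourage. Equivariance is automatic from the $\G$-invariance of $Y_{i_1}$, and all factorizing morphisms remain $\U$-controlled; the crucial pressure point is to have started with a \emph{symmetric} representative $\U$, since symmetry is precisely what makes the dual argument for $N \to L'$ go through.
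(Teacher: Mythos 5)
Your argument is correct; the paper offers no proof of its own here but simply cites \cite[Lemma 8.14]{bunkeeq}, and your construction (enlarge to a common support $Y_{i_0}$, thicken by a symmetric controlling entourage into $Y_{i_1}$, and split $N$ pointwise along $Y_{i_1}$ and its complement) is essentially the argument given in that reference. The one step you rightly single out --- symmetrizing $\U$ so that $x\notin Y_{i_1}$ forces $\U[\{x\}]\cap Y_{i_0}=\emptyset$ --- is indeed the crux of the second factorization, and your treatment of it is sound.
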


Let $X$ be a $\G$-bornological coarse space, and let $(Z,\Y)$ be an equivariant  complementary pair. Consider the functor
\begin{equation}\label{functora}
a\colon V_k^\G(Z)/V_k^\G(Z\cap\Y)\to V_k^\G(X)/V_k^\G(\Y)
\end{equation}
induced by the inclusion  $i\colon Z\to X$; 
on objects, it coincides with $i_*\colon  V_k^\G(Z) \to V_k^\G(X)$, but on morphisms it sends the equivalence class $[A]$ of $A$ to the equivalence class $[i_*(A)]$ of $i_*(A)$.

\begin{lemma}\cite[Prop.\,8.15]{bunkeeq}\label{equiv}
	The functor $a$ in \eqref{functora}
	is  an equivalence of categories.
\end{lemma}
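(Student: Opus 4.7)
The plan is to verify directly the two defining properties of an equivalence of categories, namely essential surjectivity and full faithfulness, for the induced functor $a$. Throughout, the key structural tool is the canonical point-wise decomposition \eqref{g5giojgoigg5g45gfg}, which realises every $X$-controlled object as a coproduct of its values on singletons. Combined with the complementary pair condition---the existence of an index $i\in I$ with $Z\cup Y_i = X$, so that $X\setminus Z\subseteq Y_i$---this allows one to decompose both objects and morphisms of $V_k^G(X)$ according to whether the underlying points lie in $Z$ or not.

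For essential surjectivity, starting from $(M,\rho)\in V_k^G(X)$, I would split $M$ inside $V_k^G(X)$ as a direct sum $M\cong i_*(M^{\prime\prime})\oplus M_{X\setminus Z}$, where $(M^{\prime\prime},\rho^{\prime\prime})$ is the object of $V_k^G(Z)$ obtained by restricting $M$ to bounded subsets of the $G$-invariant subspace $Z$, and $M_{X\setminus Z}$ is the complementary summand supported on $X\setminus Z$. Since $X\setminus Z\subseteq Y_i$, the latter belongs to $V_k^G(Y_i)\subseteq V_k^G(\Y)$, and hence $[M]\cong [i_*(M^{\prime\prime})]=[a(M^{\prime\prime})]$ in the quotient $V_k^G(X)/V_k^G(\Y)$.

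For full faithfulness, I would first establish that $i_*$ induces a bijection $\Hom_{V_k^G(Z)}(M,M')\cong \Hom_{V_k^G(X)}(i_*M,i_*M')$ whenever $M,M'\in V_k^G(Z)$: a $U$-controlled morphism between objects supported on $Z$ is determined by its point-wise components on $Z$, which yields a $(U\cap Z\times Z)$-controlled morphism in $V_k^G(Z)$; passing to the colimit over $G$-invariant entourages delivers the identification. Granting this, it remains to match the equivalence relations on morphisms: the implication that a factorisation through an object of $V_k^G(Z\cap \Y)$ pushes forward to a factorisation through an object of $V_k^G(\Y)$ is immediate. For the converse, given $i_*\phi = \beta\circ\alpha$ with intermediate object $N\in V_k^G(Y_j)$, I would split $N\cong N_Z\oplus N_{X\setminus Z}$ by the point-wise decomposition, with $N_Z$ supported on $Z\cap Y_j$ and $N_{X\setminus Z}$ on $Y_j\setminus Z$. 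The $N_Z$-contribution to $\beta\alpha$, via the above identification, corresponds to a factorisation of a morphism in $V_k^G(Z)$ through an object of $V_k^G(Z\cap Y_j)\subseteq V_k^G(Z\cap\Y)$. For the $N_{X\setminus Z}$-contribution, I would invoke the big family property: the image in $M'$ of the relevant composite lies, modulo the controlling entourage, inside $Z\cap Y_{j'}$ for some $j'\geq j$, giving a factorisation through $V_k^G(Z\cap Y_{j'})\subseteq V_k^G(Z\cap\Y)$. Summing the two factorisations by means of the biproduct in the additive category $V_k^G(Z)$ produces the desired factorisation of $\phi$.

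I expect the main obstacle to be the entourage and equivariance bookkeeping: one must verify that the point-wise splittings of both objects and factorisation morphisms are genuine decompositions in the controlled categories, preserving the controlled structure and the $G$-action, and that the big family axiom is correctly invoked to ensure the second contribution lands within $V_k^G(Z\cap\Y)$.
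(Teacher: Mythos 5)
The paper does not prove this lemma at all: it is imported verbatim as \cite[Prop.~8.15]{bunkeeq}, so there is no in-paper argument to compare against. Your blind proof is correct and follows the standard strategy of that cited proposition: essential surjectivity via the pointwise splitting \eqref{g5giojgoigg5g45gfg} of an object along the $G$-invariant decomposition $X=Z\cup Y_i$, full faithfulness of $i_*$ onto objects supported on $Z$ by restricting entourages, and the matching of the two Karoubi-quotient relations by splitting the intermediate object $N$ and invoking the big-family axiom ($\U[Y_j]\subseteq Y_{j'}$) to push the $(X\setminus Z)$-contribution into $V_k^G(Z\cap\Y)$. The only point requiring the care you already flag is that the pointwise splittings are genuine direct-sum decompositions in the controlled categories compatible with the $\rho$-data, which holds because $Z$, $X\setminus Z$ and the $Y_j$ are $G$-invariant.
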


\begin{proof}[Proof of Theorem \emph{\ref{excision}}]
	Let $X$ be a $\G$-bornological coarse space, and let $(Z,\Y)$ be an equivariant  complementary pair on $X$.
	By Lemma \ref{karoubincl},  $V_k^\G(Z\cap\Y)\subseteq V_k^\G(Z)$ and $V_k^\G(\Y)\subseteq V_k^\G(X)$ are Karoubi filtrations and yield the
	following sequences of $k$-linear  categories:
	\[
	V_k^\G(Z\cap\Y)\to V_k^\G(Z)\to V_k^\G(Z)/V_k^\G(Z\cap\Y)
	\]
	and
	\[
	V_k^\G(\Y)\to V_k^\G(X)\to V_k^\G(X)/V_k^\G(X\cap\Y).
	\]
	By  \cite[Ex.~1.8, Prop.~2.6]{Schlichting04} (see also \cite[Rem.~3.3.12]{mythesis}), Karoubi filtrations  induce short exact sequences of dg-categories.
	Hence, 
	Theorem \ref{mainkeller}  gives  cofiber sequences of mixed complexes.
	The inclusion $Z\hookrightarrow X$ induces a commutative diagram (where the rows are the obtained cofiber sequences)
	\[
	\begin{tikzcd}
	\fMix(V_k^\G(Z\cap\Y)) \arrow{d} \arrow{r} & \fMix(V_k^\G(Z)) \arrow{d} \arrow{r} & \fMix(V_k^\G(Z)/V_k^\G(Z\cap\Y))\arrow[dashrightarrow]{d}{a_*} \\
	\fMix(V_k^\G(\Y)) \arrow{r} & \fMix(V_k^\G(X)) \arrow{r} & \fMix(V_k^\G(X)/V_k^\G(X\cap\Y));
	\end{tikzcd}
	\]
	here $a_*$ is the map induced by $a\colon V_k^\G(Z)/V_k^\G(Z\cap\Y)\to V_k^\G(X)/V_k^\G(\Y)$ in \eqref{functora}. By Lemma~\ref{equiv}, the functor $a$   yields an equivalence of categories, hence of mixed complexes and the left square  is a co-Cartesian square in $\Mix_{\infty}$.
	
	In order to conclude the proof, we recall that ${\XC}_k^\G(\Y)$ is defined as the filtered colimit ${\XC}_k^\G(\Y)=\colim_i {\XC}_k^\G(Y_i)$  and that 
	$
	V_k^\G(\Y)\coloneqq \colim_{i\in I} V_k^\G(Y_i).
	$
	The functor $\fMix$ commutes with filtered  colimits of dg-categories. Hence we have the   equivalence $\fMix(V_k^\G(\Y))= \fMix(\colim_i V_k^\G(Y_i))\simeq \colim_i \fMix(V_k^\G(Y_i))$, and the same holds for $Z\cap\Y$.   
	By using these identifications, we obtain the co-Cartesian square in $\Mix_{\infty}$
	\[
	\begin{tikzcd}
	{\XC}_{k}^G(Z\cap\Y) \arrow{d} \arrow{r} & {\XC}_{k}^G(Z) \arrow{d}   \\
	{\XC}_{k}^G(\Y) \arrow{r} & {\XC}_{k}^G(X).
	\end{tikzcd}
	\]
	This	means that  ${\XC}_{k}^G$ satisfies coarse excision.
\end{proof}

\subsection{Coarse Hochschild and cyclic homology}

The  functors $\mathrm{forget}\colon\Mix\to\bCh$ in \eqref{forgetf}, sending a mixed complex to the underlying chain complex, and $\Tot(\B-)\colon \Mix\to\bCh$ in~\eqref{totfunct},  sending a mixed complex to the total complex of its associated bicomplex, send quasi-isomorphisms of mixed complexes to quasi-isomorphisms of chain complexes. Hence they descend to functors between the localizations. 

\begin{df}\label{eqcHH}
	Let $k$ be a field, $G$ a group and $\bCh_{\infty}$ the $\infty$-category of chain complexes.
	The \emph{$\G$-equivariant coarse Hochschild homology} $\XHH_k^\G$ (with $k$-coefficients) is the $\G$-equivariant $\bCh_\infty$-valued coarse homology theory 
	\[
	\begin{tikzcd}
	\XHH_k^\G\colon \G\BC \arrow{r}{\XC_{k}^{\G}} &\Mix_\infty \arrow{r}{\mbox{forget}} & \bCh_\infty
	\end{tikzcd}
	\]
	defined as  composition of the functor 
	$\XC_{k}^G$ of Definition \ref{defXHH} and of the functor $\mathrm{forget}$ in~\eqref{forgetf}. The composition 
	\[
	\begin{tikzcd}
	\XHC_k^\G\colon \G\BC \arrow{r}{\XC_{k}^{\G}} &\Mix_\infty \arrow{r}{\Tot(\B-)} & \bCh_\infty
	\end{tikzcd}
	\]
	involving composition with the functor $\Tot(\B-)$ \eqref{totfunct} is  \emph{$\G$-equivariant coarse cyclic homology}.
\end{df}

The definitions are justified by the following:

\begin{thm}\label{gajhljaghjlgalaöghga}
	The functors
	\[
	\XHH_k^\G\colon\G\BC\to\mathbf{Ch}_\infty \quad \text{ and } \quad 	\XHC_k^\G\colon\G\BC\to\mathbf{Ch}_\infty
	\]	are $\G$-equivariant $\bCh_{\infty}$-valued coarse homology theories. 
\end{thm}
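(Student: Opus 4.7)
The plan is to deduce this theorem from Theorem \ref{echt}, which establishes that $\XC_k^G$ is already a $G$-equivariant $\Mix_\infty$-valued coarse homology theory. Since $\XHH_k^G$ and $\XHC_k^G$ are obtained from $\XC_k^G$ by post-composing with the functors $\mathrm{forget}\colon \Mix_\infty\to\bCh_\infty$ and $\Tot(\mathbf{B}-)\colon\Mix_\infty\to\bCh_\infty$, it suffices to show that these two post-composition functors preserve the structures entering the four axioms of Definition \ref{hcht}: equivalences (coarse invariance), zero objects (flasqueness), pushout squares (coarse excision), and filtered colimits (u-continuity). Since $\Mix_\infty$ and $\bCh_\infty$ are stable, it is enough to check that both functors preserve equivalences, filtered colimits, and finite colimits (equivalently, cofiber sequences).

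First I would address well-definedness at the level of localizations. For $\mathrm{forget}$, the fact that it preserves quasi-isomorphisms is immediate from Definition \ref{quasiisomixcoplx}. For $\Tot(\mathbf{B}-)$, this is a standard consequence of the spectral sequence of the column-filtered double complex \eqref{dcc}, whose $E^1$-page is computed by forgetting the $B$-differential; a morphism that is a quasi-isomorphism of underlying $b$-complexes induces an isomorphism on $E^1$ and hence a quasi-isomorphism after totalization (or alternatively one uses that $\Tot(\mathbf{B}-)$ admits a natural filtration whose associated graded is, degree-wise, the forget functor shifted). Thus both functors descend to $\infty$-functors $\Mix_\infty\to\bCh_\infty$.

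Next I would verify preservation of colimits. Via the isomorphism $\Mix\cong\LMod$ of Remark \ref{agsfafyl}, the functor $\mathrm{forget}$ corresponds to restriction of scalars along the inclusion $k\hookrightarrow\Lambda$; such a restriction functor is exact and commutes with arbitrary direct sums on the nose, so it preserves all colimits in $\bCh$ and thus induces a colimit-preserving exact functor between the stable $\infty$-categorical localizations (using the combinatorial model structure of Remark \ref{jganlngjòl} and \cite[Prop.~1.3.4.22]{HA}). For $\Tot(\mathbf{B}-)$ the degree-$n$ component is the direct sum $\bigoplus_{i\geq 0} C_{n-2i}$, and since direct sums commute with filtered colimits and with finite colimits of chain complexes, the total-complex functor preserves filtered colimits and cofiber sequences.

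Having these inputs, the proof is then formal: applying the post-composition functor to the pushout square of Theorem \ref{echt} yields a pushout in $\bCh_\infty$, proving excision; applying it to $\XC_k^G(X)\simeq\colim_{U\in\CC^G}\XC_k^G(X_U)$ yields u-continuity; the vanishing $\mathrm{forget}(0)\simeq 0\simeq\Tot(\mathbf{B}0)$ yields flasqueness; and preservation of equivalences yields coarse invariance. The mildly delicate step is the quasi-isomorphism preservation for $\Tot(\mathbf{B}-)$, since the totalization of an unbounded double complex requires a convergence argument; but the column filtration here is bounded below (the bicomplex \eqref{dcc} vanishes for $p<0$), so the spectral sequence converges and the argument goes through without issue.
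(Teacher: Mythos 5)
Your proposal is correct and follows essentially the same route as the paper: both reduce to Theorem \ref{echt} and observe that $\mathrm{forget}$ and $\Tot(\B-)$ descend to the localizations and preserve filtered colimits and cofiber sequences, so that all four axioms are inherited. The paper merely asserts these preservation properties, whereas you supply the justifications (restriction of scalars along $k\hookrightarrow\Lambda$, the degreewise direct-sum description of $\Tot$, and the bounded-below column filtration for quasi-isomorphism invariance), all of which are sound.
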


\begin{proof}
	By Theorem  \ref{echt}, the functor $\XC^\G_k\colon G\BC\to \Mix_{\infty}$ is an equivariant coarse homology theory. 
	The functors $\mathrm{forget}\colon\Mix_\infty\to \bCh_\infty$   and the functor $\Tot(\B-)\colon \Mix_\infty\to \bCh_\infty$ commute with filtered colimits and send cofiber sequences to cofiber sequences. Hence the two compositions with $\XC^\G_k$ satisfy coarse invariance, coarse excision, $u$-continuity, and vanishing on flasques.
\end{proof}

The category of mixed complexes has a natural symmetric monoidal structure  induced by tensor products between the underlying chain complexes \cite{kassel}. As tensor products of mixed complexes preserve equivalences, $k$ being a field,  we get a symmetric monoidal $\infty$-category  $\Mix_{\infty}^\o\coloneqq \Nerve(\Mix^\o)[W_{\mathrm{mix}}^{\o,-1}]\to\Nerve(\Fin_*)$ (with monoidal structure induced by the monoidal structure on $\Mix$ by \cite[Prop.~3.2.2]{hinich}). 
By \cite[Thm.~2.4]{kassel},  the functor $\fMix$  has a lax symmetric monoidal refinement 
(see also \cite{MR2922389}). This implies that coarse Hochschild and cyclic homologies are lax symmetric monoidal functors:

\begin{prop}\label{XCCsymmmon}
	The functors  $\XHH_{k}^\G$ and $\XHC_{k}^\G$  admit lax symmetric monoidal refinements:
	\[
	\XHH_{k}^{\G,\o}\colon \Nerve(\G\BC^\o)\to\bCh_{\infty}^\o
	\]
	and 
	\[
	\XHC_{k}^{\G,\o}\colon \Nerve(\G\BC^\o)\to\bCh_{\infty}^\o,
	\]
	where $\bCh_{\infty}^\o$ is the $\infty$-category of chain complexes with its standard symmetric monoidal structure.
\end{prop}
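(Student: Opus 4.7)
The plan is to exhibit each of the functors composing $\XHH_k^G$ and $\XHC_k^G$ as a lax symmetric monoidal functor between appropriate symmetric monoidal (ordinary or $\infty$-)categories, and then to verify that the localizations involved respect these structures so that the composition descends to a lax symmetric monoidal $\infty$-functor. Concretely, $\XHH_k^G$ and $\XHC_k^G$ are defined as
$$G\BC \xrightarrow{V_k^G} \Cat_k \xrightarrow{\iota} \dgcat_k \xrightarrow{\fMix} \Mix \xrightarrow{\loc} \Mix_\infty \xrightarrow{\mathrm{forget}\;\text{or}\;\Tot(\B-)} \bCh_\infty,$$
so it suffices to check the lax symmetric monoidal property for each factor with respect to the relevant tensor structures: the cartesian/tensor product $\otimes$ on $G\BC$ from \cite{bunkeeq}, the $k$-linear tensor product on $\Cat_k$ and $\dgcat_k$, Kassel's tensor product on $\Mix$, and the tensor product of chain complexes.

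First I would establish that $V_k^G\colon G\BC\to \Cat_k$ admits a lax symmetric monoidal refinement, using the external product of controlled objects: given $(M,\rho)$ on $X$ and $(M',\rho')$ on $X'$, the functor $B\times B'\mapsto M(B)\otimes_k M'(B')$ extends to a $G$-equivariant $(X\otimes X')$-controlled $k$-module on the product and defines a $k$-bilinear functor $V_k^G(X)\otimes V_k^G(X')\to V_k^G(X\otimes X')$, with coherent associativity and symmetry coming from those of $\otimes_k$. This is essentially the argument used in \cite{bunkeeq} for multiplicative structures on coarse $K$-theory. The functor $\iota\colon \Cat_k\to\dgcat_k$ is (strict) symmetric monoidal since a $k$-linear category viewed as a dg-category concentrated in degree zero respects the tensor product.

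Next I would invoke Keller's Theorem 2.4 of \cite{keller} (and the reformulation in \cite{MR2922389} cited in the text) to get a lax symmetric monoidal refinement of $\fMix\colon \dgcat_k\to \Mix$ via the shuffle product on cyclic nerves. Since tensor products of mixed complexes preserve quasi-isomorphisms over the field $k$, the localization $\loc\colon \Mix\to\Mix_\infty$ inherits a symmetric monoidal structure by \cite[Prop.~3.2.2]{hinich}, which is exactly the construction of $\Mix_\infty^\otimes$ recalled before the proposition; similarly for $\bCh_\infty^\otimes$. Combining these steps and using that the composition of lax symmetric monoidal $\infty$-functors is lax symmetric monoidal yields a lax symmetric monoidal refinement $\XC_k^{G,\otimes}\colon \Nerve(G\BC^\otimes)\to \Mix_\infty^\otimes$.

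Finally I would post-compose with the two remaining functors. The functor $\mathrm{forget}\colon \Mix\to\bCh$ is strict symmetric monoidal essentially by definition, as the tensor product of mixed complexes has underlying chain complex the usual tensor product. The functor $\Tot(\B-)$ carries a lax symmetric monoidal structure via the Eilenberg--Zilber/shuffle map, compatibly with the one on $\fMix$. After localization at quasi-isomorphisms both descend to lax symmetric monoidal $\infty$-functors $\Mix_\infty^\otimes \to \bCh_\infty^\otimes$. Composition with $\XC_k^{G,\otimes}$ produces the desired refinements $\XHH_k^{G,\otimes}$ and $\XHC_k^{G,\otimes}$. The main obstacle I expect is the bookkeeping needed to ensure that the lax monoidal structure on $V_k^G$ is compatible with Keller's shuffle product after passage to $\infty$-categorical localizations; once this is checked, the rest is a straightforward assembly of known monoidal refinements.
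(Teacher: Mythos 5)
Your proposal is correct and follows essentially the same route as the paper: the paper's proof likewise factors $\XHH_k^G$ and $\XHC_k^G$ through $\XC_k^{G,\o}$, citing a monoidal refinement of the controlled-objects functor together with Kassel's Theorem 2.4 for $\fMix$, and then observes that $\mathrm{forget}$ and $\Tot(\B-)$ are lax symmetric monoidal. The only difference is that you spell out the external product on $V_k^G$ and the localization step explicitly where the paper defers to citations (note the shuffle-product refinement of $\fMix$ is Kassel's Theorem 2.4, not Keller's).
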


\begin{proof}
	By \cite[Thm.~3.26]{symm} and \cite[Thm.~2.4]{kassel}, the functor $\XC_{k}^G$ of Definition \ref{defXHH}  admits a lax symmetric monoidal refinement
	\[
	\XC_{k}^{\G,\o}\colon \Nerve(\G\BC^\o)\to\Mix_{\infty}^\o.
	\]
	As the functors $\mathrm{forget}$ in \eqref{forgetf} and $\Tot(\B-)$ in \eqref{totfunct}
	are lax symmetric monoidal,
	coarse Hochschild and cyclic homology admit lax symmetric monoidal refinements as well.
\end{proof}

\subsection{Comparison results and assembly maps}\label{sectioncomparson}

In this subsection, 
we compare equivariant coarse Hochschild homology with the classical version of Hochschild homology for $k$-algebras. Furthermore, we show that the forget-control map for coarse Hochschild homology is equivalent to the associated generalized assembly map. 

\begin{notation}
	Let $A$ be a $k$-algebra.
	We denote by $$C^{\HH}_{*}(A;k)\quad \text{ and  } \quad C^{\HC}_{*}(A;k)$$ the chain complexes computing the  Hochschild and cyclic homology  of the mixed complex $\fMix(A)$ associated to the cyclic object $Z_*(A)$ associated to $A$ \cite{goodwillie, loday}.
\end{notation}

Let $\{*\}$ be the one-point bornological coarse space, endowed with a trivial $G$-action.

\begin{prop}\label{pointHC}
	There are equivalences of chain complexes $$\mathcal{X}\mathrm{HH}_{k}(*)\simeq C_{*}^{\mathrm{HH}}(k;k)\quad \text{ and } \quad\mathcal{X}\mathrm{HC}_{k}(*)\simeq C_{*}^{\mathrm{HC}}(k;k)$$ between the coarse Hochschild \textup{(}cyclic\textup{)} homology of the point and the classical Hochschild \textup{(}cyclic\textup{)} homology  of $k$.
\end{prop}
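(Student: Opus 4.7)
The plan is to reduce both statements to Keller's agreement result (the last clause of Theorem~\ref{mainkeller}), via an explicit identification of the controlled category over the one-point space.

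First I would identify $V_k(*)$. When $X=\{*\}$ the bornology $\B(\{*\})$ is just the two-element poset $\{\emptyset\subseteq\{*\}\}$, and the only entourage is the diagonal, so the thickening functor is trivial. Unwinding Definition~\ref{controbj}, a $G$-equivariant $\{*\}$-controlled $k$-object is determined by a single finite-dimensional $k$-vector space $M(\{*\})$ (the conditions $M(\emptyset)\cong 0$, the pushout condition, and the local finiteness condition~\eqref{g5giojgoigg5g45gfg} become vacuous or automatic). Morphisms, by the colimit description, are just $k$-linear maps. The $G$-equivariance datum $\rho$ on the one-point space forces nothing beyond the cocycle relation, but since the action on $\{*\}$ is trivial and $g$ acts trivially on $k$ (we work with the trivial $G$-action on $k$ here, as is implicit in the statement), the data reduces to a plain finite-dimensional $k$-vector space. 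Hence there is an equivalence of $k$-linear categories
\[
V_k^G(\{*\})\ \simeq\ \mathrm{proj}\,k,
\]
where $\mathrm{proj}\,k$ denotes the category of finitely generated projective $k$-modules.

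Next I would apply Keller's Localization Theorem. By Theorem~\ref{mainkeller}, there is a canonical equivalence of mixed complexes
\[
\fMix(k)\ \xrightarrow{\ \simeq\ }\ \fMix(\mathrm{proj}\,k),
\]
and combining with the previous step we obtain $\XC_k^G(\{*\})\simeq \fMix(k)$ in $\Mix_\infty$. The mixed complex $\fMix(k)$ is by construction (Definition~\ref{mixdg} and Remark~\ref{mixtodg}) the mixed complex built from the cyclic $k$-module $Z_*(k)=\CN_*(k)$ associated to the one-object $k$-linear category $k$.

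Finally I would apply the forgetful and totalization functors. By Definition~\ref{HHC} and the definitions introduced just before the statement, the complex $C_*^{\HH}(k;k)$ is the underlying $b$-complex of $\fMix(k)$ and $C_*^{\HC}(k;k)$ is $\Tot(\B\fMix(k))$. Unwinding Definition~\ref{eqcHH}, we have $\XHH_k^G(\{*\})=\mathrm{forget}\circ \XC_k^G(\{*\})$ and $\XHC_k^G(\{*\})=\Tot(\B-)\circ \XC_k^G(\{*\})$, so both equivalences follow by applying the (homotopy-invariant) functors $\mathrm{forget}$ and $\Tot(\B-)$ to the equivalence $\XC_k^G(\{*\})\simeq\fMix(k)$ obtained above.

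The only nontrivial step is the first one: producing the equivalence $V_k^G(\{*\})\simeq \mathrm{proj}\,k$. The conditions of Definition~\ref{controbj} are set up in terms of functors on $\B(X)$ and this book-keeping, especially the compatibility with the trivial $G$-action at a point, is where one must be careful. Once this equivalence is in hand, the rest is formal from Keller's theorem and the definitions of $\XHH_k^G$ and $\XHC_k^G$.
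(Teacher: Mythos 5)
Your argument is essentially the paper's own proof: both identify $V_k(\{*\})$ with the category of finite-dimensional $k$-vector spaces (equivalently $\mathrm{proj}\,k$), invoke the agreement clause of Theorem~\ref{mainkeller} to get $\fMix(V_k(\{*\}))\simeq\fMix(k)$, and then apply $\mathrm{forget}$ and $\Tot(\B-)$. One caveat on your first step: for a nontrivial group $G$ the cocycle condition on $\rho$ does not force triviality but rather encodes a finite-dimensional $G$-representation on $M(\{*\})$, so the identification with $\mathrm{proj}\,k$ is really a statement about the non-equivariant category $V_k(\{*\})$ --- which is how the proposition (with no $G$ superscript) and the paper's proof are in fact phrased.
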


\begin{proof}
	By Theorem \ref{mainkeller}, the mixed complex $\fMix(A)$ associated to a $k$-algebra $A$ is equivalent to the mixed complex associated to the $k$-linear category of 
	finitely generated projective $A$-modules. 
	When $X$ is a point endowed with a trivial $\G$-action and $k$ is a field, the $k$-linear category $V_k(X)$ is isomorphic to the category $\mathbf{Vect}_k^{\mathrm{f.d.}}$ of finite-dimensional
	$k$-vector spaces, \emph{i.e.,}  $\fMix(V_k(\{*\}))\simeq \fMix\big(\mathbf{Vect}_k^{\mathrm{f.d.}}\big)\simeq \fMix(k)$.
\end{proof}

Let $G$ be a group. By Example \ref{equivex},  there is a canonical $G$-bornological coarse space  $\G_{\mathrm{can,min}}=(G,\CC_{\text{can}},\B_{\min} )$  associated to it.

\begin{prop}\label{iso}
	There are equivalences of chain complexes:
	\[
	\XHH_k^\G(\Gcan)\simeq C_{*}^{\HH}(k[\G];k)
	\]
	and
	\[
	\XHC_k^\G(\Gcan)\simeq C_{*}^{\HC}(k[\G];k)
	\] 
	between the $\G$-equivariant coarse Hochschild and cyclic homologies of $\Gcan$ and the classical Hochschild and cyclic homologies of the group algebra $k[\G]$.
\end{prop}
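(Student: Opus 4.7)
The strategy is to trace through Definition \ref{eqcHH}: both $\XHH_k^G(\Gcan)$ and $\XHC_k^G(\Gcan)$ are obtained from the mixed complex $\fMix(V_k^G(\Gcan))$ by post-composition with the forget functor of \eqref{forgetf}, respectively with $\Tot(\B-)$ of \eqref{totfunct}. By Definition \ref{HHC} and the notational convention preceding the statement, it therefore suffices to produce an equivalence of mixed complexes
\[
\fMix(V_k^G(\Gcan)) \simeq \fMix(k[G])
\]
in $\Mix_\infty$, where $k[G]$ is regarded as a one-object $k$-linear (hence dg) category.

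First, I would identify $V_k^G(\Gcan)$ with the $k$-linear category $\mathrm{Free}(k[G])$ of finitely generated free $k[G]$-modules, as announced in the paragraph following Definition \ref{defXHH}. Unpacking Definition \ref{controbj} at $X = \Gcan$: the bornology $\B_{\min}$ consists of the finite subsets of $G$, so the isomorphism \eqref{g5giojgoigg5g45gfg} forces an object $(M,\rho)$ to be determined by the finite-dimensional $k$-vector spaces $\{M(\{x\})\}_{x\in G}$. The equivariance data $\rho(g)$ evaluated on singletons yields coherent isomorphisms $M(\{x\}) \to gM(\{g^{-1}x\})$, making the total sum $\bigoplus_{x\in G} M(\{x\})$ into a finitely generated free $k[G]$-module of rank $\dim_k M(\{e\})$. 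Since the coarse structure $\CC_{\mathrm{can}}$ on $\Gcan$ is generated by the $G$-orbits of diagonals of finite subsets, every $G$-invariant entourage is contained in one of these generators, and the colimit defining $\Hom_{V_k^G(\Gcan)}$ in Definition \ref{Uequivmorh} recovers precisely the $k[G]$-linear maps between the associated free modules. This gives the equivalence of $k$-linear categories $V_k^G(\Gcan) \simeq \mathrm{Free}(k[G])$.

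Next, I would invoke Theorem \ref{mainkeller}: the functor $\fMix$ sends Morita equivalences to equivalences in $\Mix_\infty$, and the canonical inclusion of $k[G]$ (as a one-object dg-category) into $\mathrm{proj}(k[G])$ induces an equivalence $\fMix(k[G]) \simeq \fMix(\mathrm{proj}\,k[G])$. Because $\mathrm{Free}(k[G])$ and $\mathrm{proj}(k[G])$ share the same bounded derived category, namely that of perfect $k[G]$-modules, they are Morita equivalent, and the first step combined with Keller's theorem yields the required equivalence $\fMix(V_k^G(\Gcan)) \simeq \fMix(k[G])$ in $\Mix_\infty$. Applying the forget functor and $\Tot(\B-)$ then delivers the two stated equivalences of chain complexes.

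The main obstacle is the first step: carefully verifying that the filtered colimit over $G$-invariant entourages in the definition of morphisms in $V_k^G(\Gcan)$ recovers exactly the $k[G]$-linear maps on the associated free modules, with no extraneous support or growth conditions surviving the colimit. Once this identification of $k$-linear categories is in place, the remainder is a direct application of the Morita-invariance and agreement statements already available from Theorem \ref{mainkeller}.
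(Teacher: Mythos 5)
Your proposal is correct and follows essentially the same route as the paper: identify $V_k^G(\Gcan)$ with the category of finitely generated free $k[G]$-modules, then use Keller's Morita-invariance and agreement (Theorem \ref{mainkeller}) to pass through $\mathrm{proj}\,k[G]$ to $\fMix(k[G])$, and finally apply the forget and $\Tot(\B-)$ functors. The only difference is that the paper simply cites \cite[Prop.~8.24]{bunkeeq} for the first identification, whereas you sketch its verification by hand; your sketch is consistent with that cited result.
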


\begin{proof}
	The category $V_k^\G(\G_{\mathrm{can,min}})$ of $\G$-equivariant $\G_{\mathrm{can,min}}$-controlled finite-dimensional $k$-vector spaces  is equivalent to
	the category $\mathbf{Mod}^{\mathrm{fg,free}}(k[\G])$ of finitely generated free $k [\G]$-modules  \cite[Prop.~8.24]{bunkeeq}. By Theorem \ref{mainkeller}, Keller's mixed complex $\fMix(\mathbf{Mod}^{\mathrm{fg,free}}(k[\G]))$ of the category of finitely generated free $k[G]$-modules is  equivalent to the mixed complex associated to the category $\mathbf{Mod}^{\mathrm{fg,proj}}(k[\G])$ of finitely generated projective modules (because they are Morita equivalent dg-categories). Therefore, the result follows from the chain of equivalences of mixed complexes
	\[
	\fMix(V_k^\G(\G))\simeq \fMix(\mathbf{Mod}^{\mathrm{fg,free}}(k[\G]))\simeq \fMix(\mathbf{Mod}^{\mathrm{fg,proj}}(k[\G]))\simeq \fMix(k [\G]),
	\]
	where the last equivalence is again true  by Theorem \ref{mainkeller}. 
\end{proof}

Let $X$ be a $G$-set and let $X_{\mathrm{min,max}}$ denote the $G$-bornological coarse space with minimal coarse structure and maximal bornology.

\begin{rem}\label{calculation}
	Let $H$ be a subgroup of $G$; then, by \cite[Prop.~8.24]{bunkeeq} we get an equivalence of chain complexes:
	\[
	\XHH_{k}^\G((\G/ H)_{\mathrm{min,max}}\o\Gcan)\simeq C_{*}^{\HH}(k[H];k);
	\]
	the same holds for equivariant coarse cyclic homology.
\end{rem}

One of the main applications of coarse homotopy theory is the study of assembly maps. We conclude this subsection with  a comparison result between the forget-control maps  for equivariant coarse Hochschild  and cyclic  homology and the associated assembly maps. 
Recall the definitions of   the cone functor $\mathcal{O}^\infty_{hlg}$ \cite[Def.~10.10]{bunkeeq}, of the forget-control map $\beta$ \cite[Def.~11.10]{bunkeeq} and of the coarse assembly map $\alpha$ \cite[Def.~10.24]{bunkeeq}. 
By  \cite[Thm.~11.16]{bunkeeq}, the forget-control map for a $G$-equivariant coarse  homology theory $E$ can be compared with the classical assembly map for the associated $G$-equivariant homology theory $E\circ\mathcal{O}^\infty_{hlg}\colon G\mathbf{Top} \to \bC$.

By applying the Eilenberg-MacLane correspondence \eqref{EMcorr}, we can assume that the equivariant coarse homology theories  $\XHH_{k}^\G$ and $\XHC_{k}^\G$ are equivariant spectra-valued coarse homology theories. 

\begin{df}
	Let $\mathbf{HH}_k^G\coloneqq \mathcal{EM}\circ \XHH_k^G\circ\cO^\infty_{\text{hlg}}\colon G\mathbf{Top}\to \Sp$ be the 
	$\G$-equivariant homology theory associated to equivariant coarse Hochschild homology.
\end{df}

Let $\Fin$ be the family of finite subgroups of $\G$. The following is a consequence of \cite[Thm.~11.16]{bunkeeq} (see also \cite[Prop.~4.2.7]{mythesis}):

\begin{prop}\label{assembly}
	The forget-control map $\beta_{\Gcan,G_\mathrm{{max,max}}}$ for $\XHH_{k}^\G$ is equivalent to the assembly map $\alpha_{E_{\mathbf{Fin}}G,\Gcan}$ for the $G$-homology theory $\mathbf{HH}_k^G$. 
\end{prop}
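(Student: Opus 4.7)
The strategy is to apply the general comparison theorem \cite[Thm.~11.16]{bunkeeq} directly to the equivariant coarse homology theory $\XHH_k^G$. After post-composing with the Eilenberg-MacLane correspondence \eqref{EMcorr}, the functor $\XHH_k^G$ becomes a $G$-equivariant $\Sp$-valued coarse homology theory (by Theorem \ref{gajhljaghjlgalaöghga}), so it falls within the scope of that general result. The cited theorem says that for such an $E$, and under appropriate continuity/additivity hypotheses, the forget-control map $\beta_{\Gcan, G_{\mathrm{max,max}}}$ is naturally identified with the classical assembly map $\alpha_{E_{\mathbf{Fin}}G, \Gcan}$ for the associated $G$-equivariant homology theory $E\circ\mathcal{O}^\infty_{\mathrm{hlg}}$ restricted to the family $\mathbf{Fin}$ of finite subgroups of $G$.

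The concrete steps are as follows. First I would invoke \cite[Thm.~11.16]{bunkeeq}, whose input is a $G$-equivariant coarse homology theory together with the choice of a family of subgroups. Here the relevant family is $\mathbf{Fin}$, which is natural because the universal space $E_{\mathbf{Fin}}G$ appears in the statement. Second, I would check the additional technical hypotheses required by the theorem, namely the appropriate form of strong additivity or continuity for $\XHH_k^G$. This reduces, via the construction of $\XHH_k^G$ as $\mathrm{forget}\circ\loc\circ\,\fMix\circ\iota\circ V_k^G$, to verifying the corresponding property for the controlled-objects functor $V_k^G$ and then transporting it through Keller's functor $\fMix$. The relevant properties of $V_k^G$ are already established in \cite{bunkeeq}, while $\fMix$ commutes with filtered colimits (Theorem \ref{mainkeller}) and sends short exact sequences of dg-categories to cofiber sequences, which is the form of additivity needed here.

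Once these hypotheses are verified, the conclusion is automatic: the theorem produces the desired equivalence between the forget-control map $\beta_{\Gcan, G_{\mathrm{max,max}}}$ for $\XHH_k^G$ and the assembly map $\alpha_{E_{\mathbf{Fin}}G, \Gcan}$ for the homology theory $\mathbf{HH}_k^G \coloneqq \mathcal{EM}\circ \XHH_k^G\circ\mathcal{O}^\infty_{\mathrm{hlg}}$. The proof for $\XHC_k^G$ would proceed identically, substituting the functor $\Tot(\B-)$ for $\mathrm{forget}$ in the composition.

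The main obstacle is the verification of the technical hypothesis of \cite[Thm.~11.16]{bunkeeq}. In particular, the form of additivity or CP-continuity required there needs to be traced carefully through the composition defining $\XHH_k^G$. The key observation is that $V_k^G$ already satisfies the analogous property in the additive setting used for coarse algebraic $K$-homology, so the work amounts to confirming that Keller's $\fMix$ preserves this additivity when passing from $\mathbf{Add}$ (or $\mathbf{Cat}_k$) to $\Mix_\infty$. This should follow from the localization properties summarized in Theorem \ref{mainkeller} together with the fact that, by Proposition \ref{mixstconpl}, $\Mix_\infty$ is a cocomplete stable $\infty$-category so that cofiber sequences and filtered colimits behave as required.
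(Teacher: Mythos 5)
Your proposal takes essentially the same route as the paper: the paper's proof consists precisely of passing to the spectra-valued theory $\mathcal{EM}\circ\XHH_{k}^{\G}$ via the Eilenberg--MacLane correspondence \eqref{EMcorr} and then invoking \cite[Thm.~11.16]{bunkeeq} for the family $\Fin$. The only hypothesis the paper uses is that $\XHH_{k}^{\G}$ is a $G$-equivariant coarse homology theory (Theorem \ref{gajhljaghjlgalaöghga}), so your additional discussion of additivity/continuity is supplementary rather than a divergence in method.
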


Furthermore, the assembly map $\alpha_{E_{\mathbf{Fin}}G,\Gcan}$ for the $G$-homology theory $\mathbf{HH}_k^G$ (hence, the forget-control map $\beta_{\Gcan,G_\mathrm{{max,max}}}$ for $\XHH_{k}^\G$) is split injective by \cite[Thm.~1.7]{lr}.

\section{From coarse algebraic $K$-theory to coarse ordinary homology}\label{hasglkfhagsakö}
In this section we define a natural transformation $$\Phi_{\XHH^\G_k}\colon \XHH^\G_k\longrightarrow \XH^\G$$ 
from equivariant coarse Hochschild homology  $\XHH_k^G$ to the $\bCh_{\infty}$-valued equivariant coarse ordinary homology $\XH^\G$ and, analogously,  a natural transformation $\Phi_{\XHC^\G_k}$
from equivariant coarse cyclic homology. By abuse of notation we will denote by $\XHH^G$, $\XHC_{}^G$ and $\XH^G$ both the chain and spectra valued coarse homology theories.

The transformation $\Phi_{\XHH^\G_k}$ is constructed in the following steps:
\begin{itemize}
	\item For every $G$-bornological coarse space $X$, we consider its associated $k$-linear category $V_k^G(X)$ of controlled objects, hence the associated additive cyclic nerve $\CN_{}(V_k^G(X))$. For every tensor element $A_0\o_{}\dots\o A_n$ in the additive cyclic nerve of $V_k^G(X)$ and every  $n+1$ points $x_0,\dots,x_n$ of $X$, we define a  trace-like map, which gives an element of $k$ (see Notation \ref{not1});
	\item by letting $x_0,\dots,x_n$ vary, this  yields a $G$-equivariant locally finite controlled chain on $X$, \emph{i.e.,} an element of $\X C_n^G(X)$ (see  Definition \ref{phi} and Lemma \ref{khfgs,akhfagafskha});
	by letting $A_0\o_{}\dots\o A_n$ vary we get a map $\phi\colon \CN_{*}(V_k^G(X))\to \X C_*^G(X)$ that is a chain map with respect to the differential $d=\sum (-1)^i d_i$ of $\CN_{}(V_k^G(X))$ (see Proposition \ref{diffb});
	\item the additive cyclic nerve $\CN_{}(V_k^G(X))$ yields a mixed complex with the differentials $b$   and  $B$ as in Remark \ref{mixtodg}; the chain map $\phi$ extends to a map of mixed complexes $\tilde\phi$ (see Lemma \ref{kakhbaskölgsöklöga}) and yields a natural transformation of equivariant coarse homology theories   $\Phi_{\XHH^\G_k}\colon \XHH^\G_k\longrightarrow \XH^\G$ (see Theorem \ref{suzfagsazgsafg}).
\end{itemize}

We now proceed with the precise construction. \\

Let $V_k^\G(X)$ be the $k$-linear category of $X$-controlled finite-dimensional $k$-vector spaces of Definition \ref{controbj}.
The  additive cyclic nerve associated to $V_{k}^G(X)$   (see Definition \ref{acn})  is described, in degree $n$, by
\[
\mathrm{CN}_n(V_k^\G(X))=\bigoplus_{((M_0,\rho_0),\dots,(M_n,\rho_n))}\left(\bigotimes_{i=0}^n\Hom((M_{i+1},\rho_{i+1}),(M_i,\rho_i))\right),
\] 
where the index
$i$ runs cyclically in the set  $\{0,\dots, n\}$ and the sum ranges over all the $(n+1)$-tuples $((M_0,\rho_0),\dots,(M_n,\rho_n))$ of objects of  $V_k^\G(X)$.

\begin{rem}
	For every controlled morphism $A_{i}\colon (M_{i+1},\rho_{i+1})\to (M_i,\rho_i)$ (see Definition~\ref{Uequivmorh}) in $\Hom((M_{i+1},\rho_{i+1}),(M_i,\rho_i))$  and for every pair of points $x$ and $y$ of $ X$, there is a well-defined $k$-linear map  $$A_i^{x,y}\colon M_{i+1}(x)\to M_{i}(y)$$ induced by $A_{i}$.
\end{rem}

We use the  following notation:

\begin{notation}\label{not1}
	Let $A_0\o\cdots\o A_n$ be an element of $\bigotimes_{i=0}^n\Hom((M_{i+1},\rho_{i+1}),(M_i,\rho_i))$ 
	and let  $((M_0,\rho_0),\dots,(M_n,\rho_n))$ be an $(n+1)$-tuple of objects of  $V_k^\G(X)$. Let $(x_0,\dots,x_n)$ be a point of $X^{n+1}$.
	The symbol  $$(A_0\circ\cdots\circ A_n)|(x_0,\dots,x_n)$$ 
	denotes the linear operator $(A_0\circ\dots\circ A_n)|(x_0,\dots,x_n) \colon M_{0}(x_{n})\to M_{0}(x_{n})$ defined as the composition
	\begin{align*}
	(A_0\circ\cdots&\circ A_n)|(x_0,\dots,x_n)\coloneqq  \\   &M_0(x_n)\xrightarrow{A_n^{x_n,x_{n-1}}} M_{n}(x_{n-1})\xrightarrow{A_{n-1}^{x_{n-1},x_{n-2}}} \dots\xrightarrow{A_1^{x_1,x_0}} M_{1}(x_0)\xrightarrow{A_0^{x_0,x_n}} M_0(x_n)
	\end{align*}
	of the induced operators $A_i^{x_i,x_{i+1}}\colon M_i(x_i)\to M_{i+1}(x_{i+1})$. It
	is an endomorphism of $M_0(x_n)$, which is a finite-dimensional $k$-vector space.  
\end{notation}

Let $X$ be a $G$-bornological coarse space and let $\X C_n(X)$ be the $k$-vector space generated by the locally finite controlled  $n$-chains on $X$ (see Definition \ref{nchaincontrolled}).

\begin{df}\label{phi}
	We 
	let $\phi_n\colon \mathrm{CN}_n(V_k^\G(X))\to \X C_n(X)$ be the map  defined on elementary tensors  as 
	\begin{equation*}
	\begin{split}
	\phi_n\colon & A_0\o\cdots\o A_n\longmapsto \\ &\sum_{(x_0,\dots,x_n)\in X^{n+1}} \tr\left((A_0\circ\cdots\circ A_n)|(x_0,\dots,x_n)\colon M_0(x_n)\to M_0(x_n)\right)\cdot(x_0,\dots,x_n)
	\end{split}
	\end{equation*}
	and extended linearly.
\end{df}

\begin{lemma}\label{khfgs,akhfagafskha}
	The $n$-chain	
	$
	\phi_n(A_0\o\cdots\o A_n)$ 
	is locally finite and controlled.
\end{lemma}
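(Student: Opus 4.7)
The plan is to extract an entourage witnessing controlledness from the $\mathcal{U}_i$-controlledness of each $A_i$, and then to use the defining finiteness property of $X$-controlled objects to obtain local finiteness.

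More concretely, since each $A_i \colon (M_{i+1},\rho_{i+1}) \to (M_i,\rho_i)$ is a controlled morphism in $V_k^G(X)$, there exists a $G$-invariant entourage $\mathcal{U}_i$ of $X$ such that $A_i$ is represented by a natural transformation $M_{i+1}(-) \to M_i \circ \mathcal{U}_i[-]$ (Definition \ref{Uequivmorh}). Via the decomposition \eqref{g5giojgoigg5g45gfg}, the induced linear map $A_i^{x,y} \colon M_{i+1}(x) \to M_i(y)$ can be non-zero only if $y \in \mathcal{U}_i[\{x\}]$, equivalently $(y,x) \in \mathcal{U}_i$. Let $\mathcal{V} := \bigcup_{i=0}^{n} (\mathcal{U}_i \cup \mathcal{U}_i^{-1}) \cup \Delta_X$, which is a $G$-invariant entourage of $X$, and let $\mathcal{W} := \mathcal{V}^{n+1}$ (an entourage since coarse structures are closed under finite composition). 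First I would observe that if the trace $\tr((A_0\circ\cdots\circ A_n)|(x_0,\dots,x_n))$ is non-zero, then each individual arrow $A_i^{x_i,x_{i-1}}$ in the chain defining the composition must be non-zero, so the consecutive cyclic pairs $(x_{i-1},x_i)$ all lie in $\mathcal{V}$; chaining these together via composition of entourages yields $(x_i,x_j)\in \mathcal{W}$ for every pair of indices $i,j \in \{0,\dots,n\}$. This shows that $\phi_n(A_0\otimes\cdots\otimes A_n)$ is $\mathcal{W}$-controlled in the sense preceding Definition \ref{nchaincontrolled}.

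For local finiteness, let $B$ be any bounded subset of $X$ and suppose $(x_0,\dots,x_n) \in \supp(\phi_n(A_0\otimes\cdots\otimes A_n))$ meets $B$, say $x_{i_0} \in B$. By the controlledness established above, $x_j \in \mathcal{W}[B]$ for every $j$, and $\mathcal{W}[B]$ is bounded by compatibility of the bornology with the coarse structure (Definition \ref{BC}). On the other hand, by Definition \ref{controbj}(3) together with the decomposition \eqref{g5giojgoigg5g45gfg}, the object $M_0(\mathcal{W}[B])$ is a finite direct sum, so $M_0(\{x\}) \neq 0$ for only finitely many $x \in \mathcal{W}[B]$. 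Since the non-vanishing of the trace forces $M_0(x_n) \neq 0$ (and similarly $M_i(x_{i-1}) \neq 0$ for each $i$ by looking at the domains/codomains of the composed maps), each coordinate $x_j$ ranges over a finite set. Hence only finitely many $(n+1)$-tuples in the support meet $B$, proving local finiteness.

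The main obstacle is bookkeeping: ensuring that the entourage $\mathcal{W}$ genuinely controls \emph{all} pairs $(x_i,x_j)$ (not just the cyclically consecutive ones arising directly from the $\mathcal{U}_i$-controlledness of the $A_i$), and keeping track of which object $M_i$ must be non-zero at which point $x_j$ in order to apply the finiteness property of controlled objects uniformly. Both are resolved by passing to the symmetric closure and then to $\mathcal{V}^{n+1}$, and by noting that the non-vanishing of the trace of the cyclic composition forces the finite-dimensional vector spaces $M_i(\{x_{i-1}\})$ to be non-zero for every $i$, so that the finiteness constraint of Definition \ref{controbj}(3) applies to each coordinate separately.
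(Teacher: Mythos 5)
Your argument is correct and follows essentially the same route as the paper's: the $\mathcal{U}_i$-controlledness of the morphisms $A_i$ forces the cyclically consecutive coordinates of any support point to be $\mathcal{U}_i$-close, and condition (3) of Definition \ref{controbj} together with \eqref{g5giojgoigg5g45gfg} supplies the finiteness needed for local finiteness. If anything, you are slightly more careful than the paper in two places: you symmetrize and compose the $\mathcal{U}_i$ into an entourage that genuinely controls \emph{all} pairs $(x_i,x_j)$, and you handle tuples that meet the bounded set $B$ in an arbitrary coordinate rather than only in the last one.
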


\begin{proof}
	In order to prove that $\phi_n(A_{0}\o\cdots\o A_{n})$ is locally finite and controlled  we show that its support $\supp(\phi_n(A_{0}\o\cdots\o A_{n}))$ defined in \eqref{suppordhom} is locally finite and that there exists an entourage $\U$ of $X$ such that every $x=(x_0,\dots,x_n)$ in $\supp(\phi_n(A_{0}\o\cdots\o A_{n}))$ is $\U$-controlled. 
	
	We first observe that the operators  $A_i\colon (M_{i+1},\rho_{i+1})\to (M_{i},\rho_i)$ are $U_{i}$-controlled for some entourage $U_{i}$ of $X$. By Definition \ref{Uequivmorh}, $A_i$ is given by a natural transformation of functors $M_{i+1}\to M_{i}\circ U_{i}[-]$ satisfying an equivariance condition.
	For every point $x$ in $X$, 
	$A_{i}$ restricts to a morphism 
	\[
	M_{i+1}(x)\to M_{i}(U_{i}[x])\cong
	\bigoplus_{x^{\prime}\in U_{i}[x]} M_{i}(x^{\prime}),
	\]
	where the direct sum has only finitely many non-zero summands.
	
	Let $K$ be a bounded set of $X$. The set of points $x_n\in K_{}$ for which $M_0(x_n)$ is non-zero is finite (as a consequence of Definition \ref{catcontrobj}). For such a fixed  $x_n$, there are only finitely many points $x_{n-1}\in U_{n}[K_{}]$ such that the corresponding map $A_n^{x_{n},x_{n-1}}\colon M_0(x_n)\to M_n(x_{n-1})$ is non-zero. The set $U_{n}[K_{}]$ is a bounded set of $X$,  the morphism $A_{n-1}\colon M_{n}\to M_{n-1}$ is $U_{n-1}$-controlled 
	and we can repeat the same argument for $A_{n-1}$, hence for  each $A_{i}$. This implies  that  the $n$-chain is locally finite because, for the given bounded set $K$, we have found only finitely many tuples $(x_0,\dots,x_n)$ in the support of $\phi_n(A_0\o_{}\cdots\o A_n)$ that meet $K$.

	The chain is also $U$-controlled, where $U$ is the entourage $\U\coloneqq U_0\circ\dots\circ U_n$ of $X$.
\end{proof}

\begin{rem}
	Let $X$ be a $G$-bornological coarse space.
	Let $(M,\rho)$ be a $\G$-equivariant $X$-controlled finite-dimensional $k$-vector space and let $g$ be an element of the group $\G$. Then, $\rho(g)$ (Definition \ref{controbj}) is a natural isomorphism between the functors $M$ and $gM$. 
	The diagram 
	\[
	\begin{tikzcd}
	M_0(gx_n) \arrow{d}{\cong} \ar[r,"{A_n^{gx_n,gx_{n-1}}}"] &[+15pt] M_{n}(gx_{n-1}) \arrow{d}{\cong}\arrow{r}{}&[+15pt] \dots\arrow{r}{A_0^{gx_0,gx_n}} \arrow[dotted]{d}{\cong} &[+15pt] M_0(gx_n)\arrow{d}{\cong}\\
	gM_0(x_n) \arrow{r}{gA_n^{x_n,x_{n-1}}} & gM_{n}(x_{n-1})\arrow{r}{}& \dots\arrow{r}{gA_0^{x_0,x_n}}  & gM_0(x_n)
	\end{tikzcd}
	\]
	is commutative, for
	$A_0\o\cdots\o A_n$ in 
	$\mathrm{CN}_n(V_k^\G(X))$ with $A_{i}\colon (M_{i+1},\rho_{i+1})\to (M_i,\rho_i)$,
	where the isomorphisms are induced by $\rho_i(g)$. Hence, the
	image of  $\phi_{n}$ is a $\G$-invariant locally finite controlled $n$-chain on $X$.
\end{rem}

Let $\partial_{i}\colon \X C_n^G(X)\to\X C_{n-1}^G(X)$ be the $i$-th differential of the chain complex~$\X C^G(X)$ and let $d_{i}\colon \CN_n(V_k^\G(X))\to\CN_{n-1}(V_k^\G(X))$ be the $i$-th face map of $\CN(V_k^\G(X))$. In the next proposition we consider the chain complex $(\mathrm{CN}_{}(V_k^\G(X)),d)$ underlying the additive cyclic nerve~$\CN(V_k^\G(X))$. 

\begin{prop}\cite[Prop.\,4.3.6]{mythesis}\label{diffb}
	The maps $\phi_n\colon \mathrm{CN}_n(V_k^\G(X))\to\X C_n^G(X)$ of Definition~\textup{\ref{phi}}
	extend to a chain map 
	$
	\phi\colon (\mathrm{CN}_{}(V_k^\G(X)),d)\to(\X C_{}^G(X),\partial).
	$
\end{prop}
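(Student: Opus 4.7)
The proof reduces to showing the identity $\partial\circ\phi_{n}=\phi_{n-1}\circ d$ on an elementary tensor $A_{0}\otimes\cdots\otimes A_{n}$, where $\partial=\sum_{i=0}^{n}(-1)^{i}\partial_{i}$ and $d=\sum_{i=0}^{n}(-1)^{i}d_{i}$. I would verify the identity face by face: expand both sides as locally finite controlled chains on $X$ and, for each $0\leq i\leq n$, compare the coefficient at each tuple $(y_{0},\dots,y_{n-1})\in X^{n}$ of $\partial_{i}\phi_{n}(A_{0}\otimes\cdots\otimes A_{n})$ with that of $\phi_{n-1}(d_{i}(A_{0}\otimes\cdots\otimes A_{n}))$. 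Finiteness of all the sums to follow is guaranteed, as in the proof of Lemma~\ref{khfgs,akhfagafskha}, by the fact that each $A_{i}$ is $U_{i}$-controlled for some entourage $U_{i}$ of $X$.

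For the inner faces $0\leq i\leq n-1$, the coefficient of $(y_{0},\dots,y_{n-1})$ on the $\partial_{i}$-side equals
\[
\sum_{x_{i}\in X}\tr\bigl((A_{0}\circ\cdots\circ A_{n})|(y_{0},\dots,y_{i-1},x_{i},y_{i},\dots,y_{n-1})\bigr),
\]
and the only $x_{i}$-dependent factors of the composition are $A_{i}^{x_{i},y_{i-1}}$ and $A_{i+1}^{y_{i},x_{i}}$. By $k$-linearity of the trace, together with the key algebraic identity
\[
\sum_{x_{i}}A_{i}^{x_{i},y_{i-1}}\circ A_{i+1}^{y_{i},x_{i}} \;=\; (A_{i}\circ A_{i+1})^{y_{i},y_{i-1}},
\]
which expresses composition of controlled morphisms in $V_{k}^{G}(X)$ via the direct-sum decomposition \eqref{g5giojgoigg5g45gfg} combined with the natural-transformation formula \eqref{hgsalfglagfl}, this sum collapses to $\tr\bigl((A_{0}\circ\cdots\circ(A_{i}\circ A_{i+1})\circ\cdots\circ A_{n})|(y_{0},\dots,y_{n-1})\bigr)$, which is precisely the coefficient of $(y_{0},\dots,y_{n-1})$ in $\phi_{n-1}(d_{i}(A_{0}\otimes\cdots\otimes A_{n}))$.

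The outer face $i=n$ is where the cyclic structure of Definition~\ref{acn} enters, and is the main technical obstacle. Unfolding $\partial_{n}\phi_{n}(A_{0}\otimes\cdots\otimes A_{n})$ yields $\sum_{x_{n}}\tr$ of an endomorphism of $M_{0}(x_{n})$. The cyclicity of the trace $\tr(UV)=\tr(VU)$ (note that $\sigma=0$ throughout, since $V_{k}^{G}(X)$ is a $k$-linear category, hence all Hom's are concentrated in degree zero) then allows me to cycle the factor $A_{n}^{x_{n},y_{n-1}}$ past the remainder of the composition, turning the trace into one over $M_{n}(y_{n-1})$; the same composition identity as in the inner case collapses $\sum_{x_{n}}A_{n}^{x_{n},y_{n-1}}\circ A_{0}^{y_{0},x_{n}}$ into $(A_{n}\circ A_{0})^{y_{0},y_{n-1}}$. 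Matching this with $\phi_{n-1}(d_{n}(A_{0}\otimes\cdots\otimes A_{n}))$ requires tracking the sign $(-1)^{n+\sigma}$ produced by Keller's cyclic face map against the sign arising from the cyclic re-indexing of endomorphisms of different vector spaces $M_{i}(y)$; once this sign bookkeeping is in place, assembling all face contributions into the alternating sums $\partial$ and $d$ yields $\partial\circ\phi_{n}=\phi_{n-1}\circ d$, extending the $\phi_{n}$ into the desired chain map.
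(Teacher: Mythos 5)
Your argument is correct and is exactly the paper's proof written out in full: the paper disposes of the proposition in one sentence, invoking additivity of the trace (your inner-face computation, where linearity of $\tr$ collapses the sum over the deleted point via the matrix-coefficient description of composition in $V_k^\G(X)$) and its invariance under cyclic permutations (your outer-face computation). The only residual item, the ``sign bookkeeping'' you defer at $i=n$, is in fact vacuous: since $V_k^\G(X)$ is concentrated in degree zero one has $\sigma=0$, and $\tr(UV)=\tr(VU)$ holds without any sign for linear maps between two different finite-dimensional spaces, so with the standard normalization of the cyclic face map both sides of $\partial_n\phi_n=\phi_{n-1}d_n$ carry the same factor $(-1)^n$ in the alternating sums and nothing is left open.
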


\begin{proof}
	The result is a consequence of the additivity of the trace map and of its invariance under cyclic permutations.
\end{proof}

If $M$ is a cyclic module,  as in Remark \ref{mixtodg}, we get a mixed complex. 
The chain complex $\X C^G(X)$ is also a mixed complex with the differential $B=0$. 

\begin{lemma}\label{kakhbaskölgsöklöga}
	The chain map $\phi\colon \mathrm{CN}(V_{k}^\G(X))\to\X C^\G(X)$ of Definition \textup{\ref{phi}} extends to a map 
	$\tilde{\phi}\colon \mathrm{Mix}(V_{k}^\G(X))\to\X C^\G(X)$ 
	that is a morphism of  mixed complexes. 
\end{lemma}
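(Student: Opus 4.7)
The plan is to define $\tilde\phi$ degreewise by $\tilde\phi_{n+1}=\phi_{n+1}$ and to verify the two mixed complex identities $\tilde\phi\circ b=\partial\circ\tilde\phi$ and $\tilde\phi\circ B=0$, the latter being forced by the convention $B=0$ on the target $\X C^\G(X)$. The first identity is exactly the content of Proposition~\ref{diffb}: the Hochschild differential $b=\sum_i(-1)^i d_i$ on $\CN_{n+1}(V_k^\G(X))$ corresponds to the alternating sum of face maps on $\X C^\G(X)$. The core of the proof is therefore the vanishing $\tilde\phi\circ B=0$.

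For this I would unfold $B=(-1)^{n+1}(1-t_{n+1})\,s\,N$ from~\eqref{ksugalaskaB}. The decisive input is the cyclicity of the trace: since the endomorphism in Notation~\ref{not1} is the iterated composition $A_0^{x_0,x_n}\circ A_1^{x_1,x_0}\circ\cdots\circ A_n^{x_n,x_{n-1}}$, cycling the factors $A_i$ corresponds under $\tr$ to cyclically permuting the point-tuple $(x_0,\dots,x_n)\in X^{n+1}$. Up to the Koszul sign $(-1)^{n+\sigma}$ built into $t_{n+1}$ via Definition~\ref{acn}, this identifies $\phi_n\circ t_{n+1}$ with the cyclic shift of points applied to $\phi_n$, so that $\phi$ is cyclically equivariant in a precise sense.

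Combined with the fact that the extra degeneracy $s=(-1)^{n+1}t_{n+1}s_n$ inserts an identity $\mathrm{id}_{M_i}$ whose components $\mathrm{id}^{x,y}$ vanish off the diagonal, this equivariance forces $\phi_{n+1}(B(-))$ to decompose as a sum of cyclically-related chains supported on partially diagonal tuples of $X^{n+2}$ that cancel pairwise after the alternation $(1-t_{n+1})$. Conceptually, $\phi$ factors through the cyclic coinvariants of $\CN(V_k^\G(X))$, on which $B=(1-t)sN$ vanishes by construction; this is the cleanest route to $\tilde\phi\circ B=0$.

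The main obstacle is the explicit sign and index bookkeeping needed to exhibit the cancellation directly in $\X C^\G(X)$, since the cyclic shift on $\X C^\G(X)$ is not the identity on the relevant partial diagonals. Reconciling the Koszul signs in $t_{n+1}$, the prefactor $(-1)^{n+1}$ appearing in both $s$ and $B$, and the simplicial signs of $\partial$ is the delicate part; once performed on elementary tensors, linearity extends the identity and completes the verification.
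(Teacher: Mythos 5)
Your reduction of the lemma to the single identity $\phi\circ B=0$ is the right frame, and the ingredients you list (the formula $B=\pm(1-t)sN$, cyclic invariance of the trace, and the fact that $\mathrm{id}^{x,y}=\delta_{x,y}\,\mathrm{id}_{M(x)}$) are exactly the ones the paper's one-line proof invokes. But your proposed mechanism for the vanishing does not work. Cyclic invariance of the trace gives $\phi_n(t(A_0\o\cdots\o A_n))=\pm\,\tau^{*}\phi_n(A_0\o\cdots\o A_n)$, where $\tau^{*}$ is the pullback of chains along the cyclic permutation of the coordinates of $X^{n+1}$; since $\tau^{*}$ is \emph{not} the identity on $\X C_n(X)$ (the generators $(x_0,\dots,x_n)$ and $(x_1,\dots,x_n,x_0)$ are distinct), $\phi$ does \emph{not} factor through the cyclic coinvariants of $\CN(V_k^\G(X))$, so the ``cleanest route'' you propose is unavailable. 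The claimed pairwise cancellation inside $\phi_{n+1}((1-t)sN\omega)$ also fails on support grounds: $\phi_{n+1}(sN\omega)$ is concentrated on the partial diagonal $\{x_0=x_{n+1}\}$ (the inserted identity sits in position $0$), while $\phi_{n+1}(t\,sN\omega)$ is concentrated on $\{x_0=x_1\}$, and chains supported on different partial diagonals cannot cancel one another.

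The failure is already visible in the lowest degree, which you should have tested. For $n=0$ one finds $B(A)=\pm(\mathrm{id}\o A+A\o\mathrm{id})$ and $\phi_1(\mathrm{id}\o A)=\phi_1(A\o\mathrm{id})=\sum_{x}\tr(A^{x,x})\,(x,x)$, so $\phi_1(B(A))=\pm 2\sum_x\tr(A^{x,x})\,(x,x)$: the two terms reinforce rather than cancel, and for $X=\{*\}$ and $A=\mathrm{id}_k$ this is $\pm 2\neq 0$ in $\X C_1(\{*\})$ when $\mathrm{char}\,k\neq 2$. So the identity $\phi\circ B=0$ that your argument needs --- and which the lemma, with $B=0$ on the target, literally asserts --- is not a formal consequence of cyclic invariance. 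What cyclic invariance actually yields is that $\phi$ is a morphism of \emph{cyclic} modules into the cyclic module of controlled chains, hence a morphism of mixed complexes for the \emph{induced} (generally nonzero) $B$-operator on $\X C^\G(X)$, not for $B=0$. To close the gap you would have to either work with that target mixed structure (or pass to normalized complexes) and track the effect on the cyclic-homology transformation, or find a genuinely different reason for the vanishing; the sketch as written does not supply one, and the explicit sign bookkeeping you defer is precisely where the argument breaks.
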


\begin{proof}
	The proof is a simple computation and uses the definition of the operator $B$ \eqref{ksugalaskaB} and that the trace is invariant under cyclic permutations. See also \cite[Lemma 4.3.7]{mythesis}. 
\end{proof}

We can now construct the natural transformation $\Phi_{\XHH_{k}^\G}\colon \XHH_{k}^\G\to\X C^\G_{}$:

\begin{thm}\label{suzfagsazgsafg}
	The map $\phi$ extends to natural transformations
	\begin{equation*}
	\Phi_{\XHH_{k}^\G}\colon \XHH_{k}^\G\to\XH^\G_{}.
	\end{equation*}
	and 
	\[
	\Phi_{\XHC_{k}^\G}\colon \XHC_{k}^\G\to\bigoplus_{n\in \N}\XH_{}^\G
	\]
	of $G$-equivariant $\bCh_{\infty}$-valued coarse homology theories.
\end{thm}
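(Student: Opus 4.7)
The plan is to upgrade the morphism of mixed complexes $\tilde{\phi}$ produced in Lemma~\ref{kakhbaskölgsöklöga} to a natural transformation of functors $G\BC \to \Mix_\infty$, and then to obtain $\Phi_{\XHH_{k}^\G}$ and $\Phi_{\XHC_{k}^\G}$ by post-composing with the forgetful functor of \eqref{forgetf} and with the total-complex functor $\Tot(\B-)$ of \eqref{totfunct}, respectively. Throughout, I regard $\X C^\G(X)$ as a mixed complex by setting its $B$-differential to be zero, as in Lemma~\ref{kakhbaskölgsöklöga}.

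The first task is to check naturality of the assignment $X \mapsto \tilde{\phi}_X$ with respect to a morphism $f \colon X \to Y$ in $G\BC$. The pushforward on controlled objects uses the splitting $f_* M(y) = M(f^{-1}(y)) \cong \bigoplus_{x \in f^{-1}(y)} M(x)$ from Definition~\ref{controbj}, so for every controlled morphism $A \colon (M_{i+1}, \rho_{i+1}) \to (M_i, \rho_i)$ the component of $f_* A$ from the summand $M_{i+1}(x)$ to the summand $M_i(x')$ is exactly $A^{x, x'}$. Inserting these splittings at every slot, the endomorphism $(f_* A_0 \circ \cdots \circ f_* A_n) | (y_0, \ldots, y_n)$ of $M_0(f^{-1}(y_n))$ contains, as diagonal block summand indexed by $x_n \in f^{-1}(y_n)$, the sum over the intermediate $(x_0, \ldots, x_{n-1}) \in \prod_{i < n} f^{-1}(y_i)$ of the composites $(A_0 \circ \cdots \circ A_n) | (x_0, \ldots, x_n)$. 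By additivity of the trace one obtains
\[
\tr\bigl((f_* A_0 \circ \cdots \circ f_* A_n) | (y_0, \ldots, y_n)\bigr) = \sum_{(x_0, \ldots, x_n) \in \prod_i f^{-1}(y_i)} \tr\bigl((A_0 \circ \cdots \circ A_n) | (x_0, \ldots, x_n)\bigr),
\]
which is precisely $\X C^\G(f)(\phi_X(A_0 \o \cdots \o A_n))$ evaluated at $(y_0, \ldots, y_n)$. This yields the required naturality of $\phi$, hence of $\tilde{\phi}$ via Lemma~\ref{kakhbaskölgsöklöga}.

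Localising at quasi-isomorphisms of mixed complexes, one then obtains a natural transformation $\tilde{\phi} \colon \fMix \circ \iota \circ V_k^\G \Rightarrow (\X C^\G, b, 0)$ of functors $G\BC \to \Mix_\infty$. Post-composing with $\mathrm{forget}$ recovers $\X C^\G$ in the target and delivers $\Phi_{\XHH_k^\G} \colon \XHH_k^\G \to \XH^\G$. For the cyclic case I post-compose with $\Tot(\B-)$; since the double complex \eqref{dcc} associated to a mixed complex with $B = 0$ has vanishing horizontal differentials, its total complex has in degree $n$ the $k$-module $\bigoplus_{i \geq 0} \X C^\G_{n - 2i}(X)$ with differential induced by $b$ alone, identifying the target with $\bigoplus_{n \in \N} \XH^\G$ up to the usual degree shifts. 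Both sources and targets are equivariant coarse homology theories (by Theorem~\ref{gajhljaghjlgalaöghga} and, for the cyclic target, the fact that coproducts of coarse homology theories are coarse homology theories), so the two resulting natural transformations are between equivariant coarse homology theories as claimed.

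The principal difficulty is the naturality check, where one must reconcile two independent effects of $f$: the dispersal of $f_* M(y)$ as a direct sum indexed by $f^{-1}(y)$ on the controlled-object side, and the summation over preimage tuples in the definition of $\X C^\G(f)$ on chains. The reconciliation is carried out exactly through the block-diagonal decomposition of the induced endomorphism and additivity of the trace. Once this identity is established, the rest of the construction is a formal assembly of Proposition~\ref{diffb}, Lemma~\ref{kakhbaskölgsöklöga}, and the fact that $\mathrm{forget}$ and $\Tot(\B -)$ descend to functors $\Mix_\infty \to \bCh_\infty$ that preserve filtered colimits and cofiber sequences.
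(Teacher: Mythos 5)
Your proposal follows essentially the same route as the paper's proof: establish naturality of $\phi$ with respect to morphisms $f\colon X\to Y$, extend to mixed complexes via Lemma~\ref{kakhbaskölgsöklöga}, localise, and post-compose with $\mathrm{forget}$ and $\Tot(\B-)$, using that $B=0$ on the target to identify $\Tot(\B\,\X C^\G(X))$ with an $\N$-indexed sum of shifted copies of $\X C^\G(X)$. The only difference is that you spell out the commutativity of the naturality square via the block decomposition of $f_*M(y)\cong\bigoplus_{x\in f^{-1}(y)}M(x)$ and additivity of the trace, a verification the paper merely asserts; this is a welcome addition, not a deviation.
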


\begin{proof}
	The  map $\phi\colon (\mathrm{CN}_{}(V_k^\G(X)),d)\to(\X C_{}^G(X),\partial)$ of Definition \ref{phi} is a chain map by Proposition \ref{diffb}. Let $f\colon X\to Y$ be a morphism of $G$-equivariant bornological coarse spaces. Consider the induced chain map $\X C^G(f)\colon \X C^G_{}(X)\to \X C^G_{}(Y)$  and the induced functor $f_{*}=V^G_{k}(f)\colon V^\G_k(X) \to V^\G_k(X')$. By functoriality of the additive cyclic nerve, $f_*$ induces  a  morphism $\CN_{}(f_{*})\colon \CN_{*}(V^\G_k(X))\to \CN_{*}(V^\G_k(Y))$ of  cyclic modules (hence, a chain map between the underlying chain complexes as well).
	
	The diagram 
	\[
	\begin{tikzcd}
	\CN_{n}(V^\G_k(X)) \arrow{r}{{\phi_{n}}}\arrow{d}{\CN_{}(f_{*})}& \X C^G_{n}(X) \arrow{d}{\X C^G(f)}\\
	\CN_{n}(V^\G_k(Y)) \arrow{r}{{\phi_{n}}} & \X C^G_{n}(Y)
	\end{tikzcd}
	\]
	is commutative. 
	The map $\phi$ extends to the associated mixed complexes by Lemma \ref{kakhbaskölgsöklöga} and this extension preserves the commutative  diagram (of associated mixed complexes). After localization and application of the forgetful functor (recall the definition of equivariant coarse Hochschild homology in terms of $\XC_{k}^G$, Definition \ref{eqcHH}), the map $\phi$ yields  a natural transformation of equivariant coarse homology theories $\Phi_{\XHH_{k}^\G}\colon \XHH_{k}^\G\to\XH^\G_{}.$
	
	To every mixed complex $C$, we  associate the chain complex $\Tot(\B C)$ \eqref{dcc} defined by $\Tot_n (\B C)=\bigoplus_{i\geq 0} C_{n-2i} $ with differential $d(c_n,c_{n-2},\dots)=(bc_n + Bc_{n-2},\dots)$. By Lemma~\ref{kakhbaskölgsöklöga},  we conclude that the map $\phi$ extends to a chain map on the total complex as well, and to a natural transformation of coarse homology theories
	\[
	\Phi_{\XHC_{k}^\G}\colon \XHC_{k}^\G\to\bigoplus_{n\in \N}\XH_{}^\G.
	\]
	Here the sum is indexed by the natural numbers because the (mixed complex associated to) the additive cyclic nerve of $V_{k}^G(X)$ is positively graded.
\end{proof}

The following result implies that the transformation $\Phi_{\XHH_{k}}\colon \XHH_{k}\to\XH_{}$ is non-trivial:

\begin{prop}\label{kajsbkbvk}
	If $X$ is the one-point space $\{*\}$, then the transformation $$\Phi_{\XHH_{k}}\colon \XHH_{k}(*)\to\XH_{}(*)$$ induces an equivalence of chain complexes.
\end{prop}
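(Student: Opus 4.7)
The plan is to reduce the statement to an explicit computation via Keller's Morita-invariance theorem. By Definition~\ref{eqcHH}, the chain complex $\XHH_k(*)$ is the underlying complex of the mixed complex $\fMix(V_k(*))$, and the transformation $\Phi_{\XHH_k}(*)$ is (the image in $\bCh_\infty$ of) the map $\tilde\phi$ constructed in Lemma~\ref{kakhbaskölgsöklöga}. The first step would be to invoke Theorem~\ref{mainkeller}: the inclusion $\iota\colon k \hookrightarrow V_k(*)$, sending $k$ to itself viewed as a one-dimensional $k$-vector space, induces a quasi-isomorphism $\fMix(\iota)\colon \fMix(k) \to \fMix(V_k(*))$, since $V_k(*) \simeq \mathbf{Vect}_k^{\mathrm{f.d.}} \simeq \mathrm{proj}\,k$ (exactly as in the proof of Proposition~\ref{pointHC}). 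It therefore suffices to prove that the composite $\tilde\phi \circ \fMix(\iota)\colon \fMix(k) \to \X C(*)$ is a quasi-isomorphism.

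The second step is to pin down both chain complexes. The underlying complex of $\fMix(k)$ is the classical Hochschild complex $C_*^{\HH}(k;k)$ of the field $k$, whose homology is $k$ concentrated in degree $0$. A direct inspection of $\X C(*)$ shows that $\X C_n(*)$ is generated by the single tuple $(*,\dots,*)$ in each degree, and that the alternating-sum boundary equals the identity in positive even degrees and zero in odd degrees; hence its homology is also concentrated in degree $0$, in line with Example~\ref{coarseordinarypoint}.

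The last step is the degree-zero calculation, the heart of the argument. An element $a \in k = \fMix(k)_0$ is sent by $\fMix(\iota)$ to the endomorphism $a \cdot \id_k$ of the object $k \in V_k(*)$, which under $\phi_0$ from Definition~\ref{phi} is mapped to $\tr(a \cdot \id_k) \cdot (*) = a \cdot (*)$. Hence the induced map on $H_0$ carries the generator to the generator and is an isomorphism; since higher homology vanishes on both sides, $\tilde\phi \circ \fMix(\iota)$ is a quasi-isomorphism, and so is $\Phi_{\XHH_k}(*)$. The step I expect to be most delicate is the bookkeeping of coefficients between the $k$-linear world of $\fMix$ and the a priori $\Z$-linear world of $\X C$: the trace takes values in $k$, so one must consistently interpret $\X C(*)$ with $k$-coefficients (matching the convention already implicit in Example~\ref{coarseordinarypoint}) in order to make the comparison on $H_0$ literally an equality.
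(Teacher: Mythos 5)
Your proof is correct and is essentially the paper's own argument in a slightly different packaging: the paper also identifies both sides with $C_*^{\HH}(k)$ (via Proposition~\ref{pointHC} and Example~\ref{coarseordinarypoint}) and then exhibits the chain-level section $\iota_n(c)=(\cdot c)\o(\cdot 1_k)\o\cdots\o(\cdot 1_k)$ of $\phi$, which is precisely your map $\fMix(\iota)$ read through the identification $\X C_n(*)\cong k$. The only cosmetic difference is that the paper verifies $\phi\circ\iota=\id$ in all degrees rather than computing on $H_0$ alone, and your remark about $k$- versus $\Z$-coefficients correctly matches the convention the paper adopts (implicitly) in Section~\ref{hasglkfhagsakö}.
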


\begin{proof}
	Let $c\colon \{*\}^{n+1}\to k$ be an $n$-chain  in $\X C_n(*)$; we identify this chain with the element $c\in k$ that is its image. Let $\iota_n\colon \X C_n(*)\to \CN_n(V_k(*)))$ be the map sending $c$ to the element $(\cdot c)\o (\cdot 1_{k})\o\cdots\o (\cdot 1_{k})$. This extends to a chain map that gives a section of the trace map, \emph{i.e.,} $\phi\circ \iota=\id$. 
	
	As  coarse Hochschild homology and coarse ordinary homology of the point are both isomorphic to the Hochschild homology of the ground field $k$ (by Example \ref{coarseordinarypoint} and  Proposition \ref{pointHC}), we get equivalences of chain complexes
	\[
	\XHH_{k}(*)\simeq C_{*}^{\HH}(k)\simeq \X C_k(*).
	\]
	By using these equivalences and the section $\phi\circ \iota=\id$, we obtain that, when $X$ is the one-point space, the transformation $\Phi_{\XHH_{k}}$ induces an equivalence of chain complexes.
\end{proof}

By applying the Eilenberg-MacLane correspondence   $\mathcal{EM}$
\eqref{EMcorr}, we now assume  that  equivariant coarse Hochschild and cyclic homology take values in the $\infty$-category $\Sp$ of spectra. The classical trace map constructed by McCarthy \cite[Sec.~4.4]{mccarthy} extends to a transformation  from equivariant coarse algebraic $K$-homology to equivariant coarse Hochschild homology:

\begin{prop}\cite[Prop.\,4.4.1]{mythesis}\label{compkth}
	There are natural transformations $$\mathrm{K}\mathcal{X}_{k}^\G\to\XHH_{k}^\G\quad \text{ and } \quad \mathrm{K}\mathcal{X}_{k}^\G\to\XHC_{k}^\G$$ induced by the Dennis trace maps from algebraic K-theory to Hochschild  homology.
\end{prop}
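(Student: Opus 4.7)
The plan is to obtain the two transformations by whiskering McCarthy's classical Dennis trace with the functor $V_k^G\colon G\BC\to\Cat_k$ of equivariant controlled objects, exploiting the fact that both the source and the target coarse homology theories factor through $V_k^G$.

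First, I would invoke McCarthy's construction. For any small $k$-linear (hence dg-) category $\A$, McCarthy produces natural maps of spectra
\begin{equation*}
\mathrm{tr}^{HH}_{\A}\colon K(\A)\longrightarrow \mathcal{EM}(\mathrm{forget}(\fMix(\A))),\qquad \mathrm{tr}^{HC}_{\A}\colon K(\A)\longrightarrow \mathcal{EM}(\Tot(\B\fMix(\A))),
\end{equation*}
natural in dg-functors $\A\to\A'$. Because $K$-theory is invariant under Morita equivalence and, by Theorem \ref{mainkeller}, Keller's functor $\fMix$ also inverts Morita equivalences, these traces descend to morphisms in the functor $\infty$-category $\Fun(\dgcat_{k,\infty},\Sp)$.

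Second, by Definition \ref{defXHH} and Definition \ref{eqcHH}, the three functors $K\cX_k^G$ and $\XHH_k^G$, $\XHC_k^G$ (the latter two composed with $\mathcal{EM}$ to land in $\Sp$) are obtained by post-composing the common functor $\iota\circ V_k^G\colon G\BC\to\dgcat_k$ with, respectively, the algebraic $K$-theory functor $K$, the composite $\mathcal{EM}\circ\mathrm{forget}\circ\loc\circ\fMix$, and $\mathcal{EM}\circ\Tot(\B-)\circ\loc\circ\fMix$. Whiskering McCarthy's natural transformations from the first step by $\iota\circ V_k^G$ therefore yields natural transformations
\begin{equation*}
K\cX_k^G\longrightarrow \XHH_k^G\qquad\text{and}\qquad K\cX_k^G\longrightarrow \XHC_k^G
\end{equation*}
in $\Fun(\Nerve(G\BC),\Sp)$; naturality in $X\in G\BC$ is automatic from the naturality of the classical trace together with the functoriality of $V_k^G$ proved in Section \ref{cobj}. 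The fact that both sides are genuine $G$-equivariant coarse homology theories follows from Theorem \ref{gajhljaghjlgalaöghga} and the analogous result \cite[Thm.~8.9]{bunkeeq} for coarse $K$-homology, so no extra check of the coarse axioms is needed.

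The main technical obstacle is in step one, namely upgrading McCarthy's pointwise Dennis trace to a morphism in the appropriate functor $\infty$-category and showing it descends along the Morita localization. This is the content of \cite[Sec.~4.4]{mccarthy} (revisited in \cite[Prop.~4.4.1]{mythesis}): by modelling both $K(\A)$ and $\fMix(\A)$ via the cyclic/simplicial machinery, the trace is presented as a morphism of cyclic spectra, and, after inverting Morita equivalences on $\dgcat_k$ and quasi-isomorphisms of mixed complexes, one obtains the desired natural transformation of $\infty$-functors, completing the argument.
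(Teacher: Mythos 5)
Your proposal is correct and follows essentially the same route as the paper, which (deferring the details to \cite[Prop.~4.4.1]{mythesis} and \cite[Sec.~4.4]{mccarthy}) obtains the transformations by whiskering McCarthy's naturally defined Dennis trace for additive categories with the functor $V_k^G$ and passing to the localizations. The only cosmetic difference is that you phrase the descent along Morita equivalences explicitly in the functor $\infty$-category, which is exactly the content the paper delegates to the cited references.
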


In particular,  when $X$ is the $G$-bornological coarse space $\Gcan$,  the induced map $$K\X_k^G(\Gcan)\to \XHH_k^G(\Gcan)$$ is the classical Dennis trace map $K(k[G])\to \HH(k[G])$ by McCarthy's agreement result  \cite[Sec.~4.5]{mccarthy}, by \cite[Prop.~8.24]{bunkeeq} and by Proposition \ref{iso}.

Composing the transformations of Proposition \ref{compkth} and of Theorem \ref{suzfagsazgsafg} we get the  
natural transformation $$K\X_k^G\to\XHH_k^G\xrightarrow{ \Phi_{\XHH^\G_k}} \XH_k^G$$ from equivariant coarse algebraic $K$-homology to equivariant coarse ordinary homology. When $X$ is the $G$-bornological coarse space $\Gcan$, we get a map $K(k[G])\to H(G;k)$ from the algebraic $K$-theory of the group ring $k[G]$ to the ordinary homology of $G$ with $k$-coefficients. We believe that further investigations of this transformation can be useful to detect  coarse $K$-theory classes.

\subsection*{Acknowledgements}
This work formed part of the author’s PhD thesis at Regensburg University. It is a pleasure to again acknowledge  Ulrich Bunke,  this work would not exist without him. The author also thanks
Clara Löh,  Denis-Charles Cisinski and Alexander Engel for helpful discussions, and the anonymous referees for constructive comments and recommendations. The author has been supported by the DFG Research Training Group GRK 1692 ``Curvature, Cycles, and Cohomology” and by  the DFG SFB 1085 ``Higher Invariants”.

\bibliographystyle{alpha}
\bibliography{biblio}

\end{document}